\newcommand{\MZ}{\mathbb{Z}}
\newcommand{\BR}{\mathbb{R}}
\newcommand{\SL}{\sum\limits}
\newcommand{\al}{\alpha}
\newcommand{\be}{\beta}
\newcommand{\ga}{\gamma}
\newcommand{\de}{\delta}
\newcommand{\La}{\Lambda}
\newcommand{\ME}{\mathbf E}
\newcommand{\oa}{\omega}
\newcommand{\CF}{\mathcal F}
\newcommand{\CU}{\mathcal U}
\newcommand{\MP}{\mathbf P}
\newcommand{\CN}{\mathcal N}
\newcommand{\CX}{\mathcal X}
\newcommand{\CW}{\mathcal W}
\newcommand{\Oa}{\Omega}
\newcommand{\si}{\sigma}
\newcommand{\pa}{\partial}
\renewcommand{\phi}{\varphi}
\newcommand{\eps}{\varepsilon}
\newcommand{\la}{\lambda}
\newcommand{\Ra}{\Rightarrow}
\newcommand{\ol}{\overline}
\newcommand{\norm}[1]{\lVert#1\rVert}
\renewcommand{\comment}[1]{}
\newcommand{\mP}{\mathbf{p}}
\newcommand{\md}{\mathrm{d}}
\DeclareMathOperator{\Var}{Var}
\DeclareMathOperator{\diag}{diag}
\DeclareMathOperator{\const}{const}
\DeclareMathOperator{\mes}{mes}
\DeclareMathOperator{\Exp}{Exp}
\DeclareMathOperator{\CE}{Exp}
\begin{document}

\theoremstyle{plain}
\newtheorem{thm}{Theorem}[section]
\newtheorem*{thmnonumber}{Theorem}
\newtheorem{lemma}[thm]{Lemma}
\newtheorem{prop}[thm]{Proposition}
\newtheorem{cor}[thm]{Corollary}
\newtheorem{open}[thm]{Open Problem}

\theoremstyle{definition}
\newtheorem{defn}{Definition}
\newtheorem{asmp}{Assumption}
\newtheorem{notn}{Notation}
\newtheorem{prb}{Problem}

\theoremstyle{remark}
\newtheorem{rmk}{Remark}
\newtheorem{exm}{Example}
\newtheorem{clm}{Claim}

\author{Andrey Sarantsev}

\title{Infinite Systems of Competing Brownian Particles}

\address{Department of Statistics and Applied Probability, University of California, Santa Barbara}

\email{sarantsev@pstat.ucsb.edu}

\keywords{Reflected Brownian motion, competing Brownian particles, asymmetric collisions, interacting particle systems, weak convergence, stochastic comparison, triple collisions, stationary distribution}

\subjclass[2010]{60K35, 60J60, 60J65, 60H10, 91B26}

\date{September 3, 2016. Version 47}

\begin{abstract}
Consider a system of infinitely many Brownian particles on the real line. At any moment, these particles can be ranked from the bottom upward. Each particle moves as a Brownian motion with drift and diffusion coefficients depending on its current rank. The gaps between consecutive particles form the (infinite-dimensional) gap process. We find a stationary distribution for the gap process. We also show that if the initial value of the gap process is stochastically larger than this stationary distribution, this process converges back to this distribution as time goes to infinity.  This continues the work by Pal and  Pitman (2008). Also, this includes infinite systems with asymmetric collisions, similar to the finite ones from Karatzas, Pal and Shkolnikov (2016). 
\end{abstract}

\maketitle

\section{Introduction}

Consider the standard setting: a filtered probability space $(\Oa, \CF, (\CF_t)_{t \ge 0}, \MP)$, with the filtration satisfying the usual conditions. Take i.i.d. $(\CF_t)_{t \ge 0}$-Brownian motions $W_i = (W_i(t), t \ge 0)$, $i = 1, 2, \ldots$ Consider an infinite system $X = (X_i)_{i \ge 1}$ of real-valued adapted processes $X_i = (X_i(t), t \ge 0),\, i = 1, 2, \ldots$, with $\MP$-a.s. continuous trajectories. Suppose we can rank them in the increasing order at every time $t \ge 0$: 
$$
X_{(1)}(t) \le X_{(2)}(t) \le \ldots 
$$
If there is a {\it tie}: $X_i(t) = X_j(t)$ for some $i < j$ and $t \ge 0$, we assign a lower rank to $X_i$ and higher rank to $X_j$. Now, fix coefficients $g_1, g_2, \ldots \in \BR$ and $\si_1, \si_2, \ldots > 0$. Assume each process $X_i$ (we call it a {\it particle}) moves according to the following rule: if at time $t$ $X_i$ has rank $k$, then it evolves as a one-dimensional Brownian motion with drift coefficient $g_k$ and diffusion coefficient $\si_k^2$. Letting $1(A)$ be the indicator function of an event $A$, we can write this as the following system of SDEs:
\begin{equation}
\label{eq:main-SDE-intro}
\md X_i(t) = \SL_{k=1}^{\infty}1\left(X_i\ \mbox{has rank}\ k\ \mbox{at time}\ t\right)\left(g_k\md t + \si\md W_i(t)\right),\ i = 1, 2, \ldots
\end{equation}
The {\it gaps} $Z_k(t) = X_{(k+1)}(t) - X_{(k)}(t)$ for $k = 1, 2, \ldots$ form the {\it gap process} $Z = (Z(t), t \ge 0)$, $Z(t) = (Z_k(t))_{k \ge 1}$. Then $X$ is called an {\it infinite system of competing Brownian particles}. A more precise definition is given in Definitions~\ref{main} and ~\ref{defapprox} later in this article. 

This system was studied in \cite{Shkolnikov2011, IKS2013}. For $g_1 = 1, g_2 = g_3 = \ldots = 0$ and $\si_1 = \si_2 = \ldots = 1$, this is called the {\it infinite Atlas model}, which was studied in \cite{PP2008, DemboTsai}.  The term {\it Atlas} stands for the bottom particle, which moves as a Brownian motion with drift $1$ (as long as it does not collide with other particles) and ``supports other particles on its shoulders''. This system is, in fact, a generalization of a similar finite system $X = (X_1, \ldots, X_N)'$, which is defined analogously to the equation~\eqref{eq:main-SDE-intro}. Finite systems of competing Brownian particles were originally introduced in \cite{BFK} as a model in Stochastic Portfolio Theory, see \cite{FernholzBook, FernholzKaratzasSurvey}. They also serve as scaling limits for exclusion processes on $\MZ$, see \cite[Section 3]{KPS2012}, and as a discrete analogue of McKean-Vlasov equation, which governs a nonlinear diffusion process, \cite{S2012, 4people, JR2013a}. Finite systems were thoroughly studied recently. We can ask the following questions about them: 

\medskip

(a) Does this system exist in the weak or strong sense? Is it unique in law or pathwise?

(b) Do we have triple collisions between particles, when three or more particles occupy the same position at the same time?

(c) Does the gap process have a stationary distribution? Is it unique?

(d) What is the exact form of this stationary distribution? 

(e) Does $Z(t)$ converge weakly to this stationary distribution as $t \to \infty$?

\medskip

For finite systems, these questions have been to a large extent answered. 

\medskip

(a) The system exists in the weak sense and is unique in law, \cite{BassPardoux}. Until the first moment of a triple collision, it exists in the strong sense and is pathwise unique, \cite{IKS2013}. It is not known whether it exists in the strong sense after this first triple collision. 

\medskip

(b) It was shown in \cite{IK2010, IKS2013, MyOwn3} that there are a.s. no triple collisions if and only if the sequence $(\si_1^2, \ldots, \si_N^2)$ is {\it concave}: 
\begin{equation}
\label{eq:concave}
\si_k^2 \ge \frac12\left(\si_{k-1}^2 + \si_{k+1}^2\right),\ \ k = 2, \ldots, N-1.
\end{equation}

\medskip

(c) The gap process has a stationary distribution if and only if 
\begin{equation}
\label{eq:stability}
\ol{g}_k > \ol{g}_N,\ k = 1, \ldots, N-1,\ \ \mbox{where}\ \ \ol{g}_k := \frac1k\left(g_1 + \ldots + g_k\right)\ \ \mbox{for}\ \ k = 1, \ldots, N.
\end{equation}
In this case, this stationary distribution is unique, see \cite{BFK, 5people}.

\medskip

(d) If, in addition to~\eqref{eq:stability}, the sequence $(\si_1^2, \ldots, \si_N^2)$ is {\it linear}: 
\begin{equation}
\label{eq:linearity}
\si_{k+1}^2 - \si_k^2 = \si_k^2 - \si_{k-1}^2\ \ \mbox{for}\ \ k = 2, \ldots, N-1,
\end{equation}
then this stationary distribution has a product-of-exponentials form, see \cite{BFK, 5people}.

\medskip

(e) The answer is affirmative, under the condition~\eqref{eq:stability}, see \cite{5people, WilliamsSurvey, Chen1996}. 

\medskip

Before surveying the answers for infinite systems, let us define some notation. Let $N \in \{1, 2, \ldots\}\cup\{\infty\}$. Introduce a componentwise (partial) order on $\BR^{N}$. Namely, take $x = (x_i)$ and $y = (y_i)$ from $\BR^{N}$. For $M \le N$, we let $[x]_M := (x_i)_{i \le M}$. For a distribution $\pi$ on $\BR^N$, we let $[\pi]_M$ be the marginal distribution on $\BR^M$, corresponding to the first $M$ components. For a matrix $C = (c_{ij})_{i, j \le N}$, we let $[C]_M = (c_{ij})_{i, j \le M}$. We say that $x \le y$ if $x_i \le y_i$ for all $i \ge 1$. For $x \in \BR^{N}$, we let $[x, \infty) := \{y \in \BR^{N}\mid y \ge x\}$. We say that two probability measures $\nu_1$ and $\nu_2$ on $\BR^{N}$ satisfy $\nu_1 \preceq \nu_2$, or, equivalently, $\nu_2 \succeq \nu_1$, if for every $y \in \BR^{\infty}$ we have: $\nu_1[y, \infty) \le \nu_2[y, \infty)$. In this case, we say that $\nu_1$ {\it is stochastically dominated by} $\nu_2$, and $\nu_2$ {\it stochastically dominates} $\nu_1$, or $\nu_1$ is {\it stochastically smaller} than $\nu_2$, or $\nu_2$ is {\it stochastically larger} than $\nu_1$. We denote weak convergence of probability measures by $\nu_n \Ra \nu$. We denote by $I_k$ the $k\times k$-identity matrix. For a vector $x = (x_1, \ldots, x_d)' \in \BR^d$, let $\norm{x} := \left(x_1^2 + \ldots + x_d^2\right)^{1/2}$ be its Euclidean norm. For any two vectors $x, y \in \BR^d$, their dot product is denoted by $x\cdot y = x_1y_1 + \ldots + x_dy_d$. The Lebesgue measure is denoted by $\mes$. A one-dimensional Brownian motion with zero drift and unit diffusion, starting from $0$, is called a {\it standard Brownian motion}. Let 
$$
\Psi(u) := \frac1{\sqrt{2\pi}}\int_u^{\infty}e^{-v^2/2}\md v,\ \ u \in \BR,
$$
be the tail of the standard normal distribution. 

\medskip

For infinite systems, the answers to questions (a) - (e) are quite different. 

\medskip

(a) For infinite systems, it seems that a necessary condition for weak existence is that initial positions $X_i(0) = x_i$, $i = 1, 2, \ldots$ of the particles should be ``far apart''. Indeed, it is an easy exercise to show that a system of i.i.d standard Brownian motions starting from the same point is not rankable from bottom to top at any fixed time $t > 0$. Some sufficient conditions for weak existence and uniqueness in law are found in \cite{Shkolnikov2011, IKS2013}. We restate them in Theorem~\ref{existenceinfclassical} in a slightly different form:
\begin{equation}
\label{seriesalpha}
\lim\limits_{i \to \infty}x_i = \infty\ \ \mbox{and}\ \ \SL_{i=1}^{\infty}e^{-\al x_i^2} < \infty,\ \ \al > 0.
\end{equation}
We also prove a few other similar results: Theorem~\ref{thm:Girsanov-existence} and Theorem~\ref{convnames}, under slightly different conditions. Strong existence and pathwise uniqueness for finite systems are known from \cite{IKS2013} to hold until the first {\it triple collision}, when three or more particles simltaneously occupy the same position. It is not known whether these hold after this first triple collision.

\medskip

(b) In this paper, we continue on the research in \cite{IKS2013} and prove essentially the same result as for finite systems. There are a.s. no triple collisions if and only if the sequence $(\si_k^2)_{k \ge 1}$ is concave: see Theorem~\ref{thmasymminf} and Remark~\ref{rmk:symm-3ple}. 

\medskip

(c) In this paper, see Theorem~\ref{thm3}, we prove that there exists a certain stationary distribution $\pi$ under the condition which is very similar to~\eqref{eq:stability}:
\begin{equation}
\label{eq:stability-inf}
\ol{g}_k > \ol{g}_l,\ \ 1 \le k < l.
\end{equation}
Actually, we can even relax this condition~\eqref{eq:stability-inf} a bit, see~\eqref{eq:comp-of-g-bar}. The question whether it is unique or not is still open.

\medskip

(d) The exact form of this distribution $\pi$ is found in~\eqref{eq:pi-SS-symm} for a special case~\eqref{eq:linearity}; it is also a product of exponentials, as in the finite case.

\medskip

(e) We prove a partial convergence result in Theorem~\ref{limitpointthm} and Theorem~\ref{thm:main-conv}: if $Z(0)$ stochastically dominates this stationary distribution $\pi$: $Z(0) \succeq \pi$, then $Z(t) \Ra \pi$ as $t \to \infty$. However, we do not know whether $Z(t)$ weakly converges as $t \to \infty$ for other initial distributions. Since we do not know whether a stationary distribution is unique, this means that we do not know what are the ``domains of attraction''.

\medskip

Let us give a preview of results for a special case: 
\begin{equation}
\label{eq:M-Atlas}
g_1 = g_2 = \ldots = g_M = 1,\ g_{M+1} = g_{M+2} = \ldots = 0,\ \si_1 = \si_2 = \ldots = 1.
\end{equation}

The following theorem is a corollary of more general results (which are enumerated above) from this paper; see Example 4.2 below. 

\begin{thm}
\label{thm1}
Under conditions~\eqref{eq:M-Atlas}, the system~\eqref{eq:main-SDE-intro} exists in the strong sense, is pathwise unique, there are a.s. no triple and simultaneous collisions, and the stationary distribution $\pi$ for the gap process is given by
\begin{equation}
\label{eq:pi-M}
\pi_M := \Exp(2)\otimes\Exp(4)\otimes\ldots\otimes\Exp(2M)\otimes\Exp(2M)\otimes\ldots
\end{equation}
\end{thm}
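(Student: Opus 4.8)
The plan is to derive Theorem~\ref{thm1} as a direct specialization of the general results advertised in the introduction, verifying that the coefficient sequences in~\eqref{eq:M-Atlas} satisfy each hypothesis in turn. First I would check well-posedness: the coefficients are bounded ($g_k \in \{0,1\}$, $\si_k = 1$), so the sufficient condition~\eqref{seriesalpha} for weak existence and uniqueness in law applies for initial configurations that are suitably spread out, and the strong existence / pathwise uniqueness statement cited in item (a) (from \cite{IKS2013}, holding until the first triple collision) upgrades this to the strong, pathwise-unique regime once triple collisions are ruled out. So the logical order is to dispose of triple collisions first.

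Next I would apply the no-triple-collision criterion from Theorem~\ref{thmasymminf}: a.s.\ no triple or simultaneous collisions occur iff $(\si_k^2)_{k\ge 1}$ is concave in the sense analogous to~\eqref{eq:concave}. Here $\si_k^2 \equiv 1$ is constant, hence trivially concave (indeed $\si_k^2 = \tfrac12(\si_{k-1}^2+\si_{k+1}^2)$ holds with equality for every $k$), so both the concavity and the stronger linearity condition~\eqref{eq:linearity} hold. This simultaneously gives the absence of triple collisions and, feeding back into (a), the strong existence and pathwise uniqueness assertion.

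For the stationary distribution I would invoke the existence result Theorem~\ref{thm3}, which requires the stability condition~\eqref{eq:stability-inf}. I must check $\ol g_k > \ol g_l$ for all $1 \le k < l$. With $g_1 = \cdots = g_M = 1$ and $g_j = 0$ for $j > M$, the running averages are $\ol g_k = 1$ for $k \le M$ and $\ol g_k = M/k$ for $k \ge M$, a sequence that is nonincreasing and strictly decreasing for $k \ge M$; one checks $\ol g_k > \ol g_l$ holds strictly whenever $k<l$ (the only subtlety is $k<l\le M$, where both averages equal $1$, so strict domination fails and one must instead appeal to the relaxed form~\eqref{eq:comp-of-g-bar} referenced in item (c)). This yields existence of $\pi$. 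Since the linearity condition~\eqref{eq:linearity} also holds, the explicit product-of-exponentials formula from~\eqref{eq:pi-SS-symm} applies, and the only remaining task is to compute the exact rate parameters.

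The main obstacle, and the one genuinely computational step, is identifying the exponential rates in~\eqref{eq:pi-M}. In the finite linear case the $k$-th gap is $\Exp$ with rate proportional to the cumulative drift imbalance $\ol g_k - \ol g_N$ (up to the diffusion normalization $\si^2$); transcribing this to the infinite model, the rate for gap $Z_k$ should be governed by $2(g_1 + \cdots + g_k - k\,\ol g_\infty)/\si^2$-type quantities, where $\ol g_\infty = \lim_k \ol g_k = 0$ here. Thus for $k \le M$ the relevant drift sum is $g_1+\cdots+g_k = k$, giving rate $2k$ and $Z_k \sim \Exp(2k)$, while for $k \ge M$ the sum saturates at $M$, giving the constant rate $2M$ and $Z_k \sim \Exp(2M)$. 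I would verify this against the general formula~\eqref{eq:pi-SS-symm} to confirm the pattern $\Exp(2)\otimes\Exp(4)\otimes\cdots\otimes\Exp(2M)\otimes\Exp(2M)\otimes\cdots$, matching~\eqref{eq:pi-M}, which completes the proof.
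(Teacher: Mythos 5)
Your proposal is correct and follows essentially the same route as the paper: Theorem~\ref{thm1} is indeed assembled there as a corollary of the general machinery, with the no-collision claim coming from concavity of $\si_k^2 \equiv 1$ (Theorem~\ref{thmasymminf} and Remark~\ref{rmk:symm-3ple}), strong existence and pathwise uniqueness then following from the cited result of \cite{IKS2013}, and the rates $\la_k = 2(g_1+\cdots+g_k)$ computed exactly as you do in the Example of Section 4.2, via~\eqref{eq:pi-SS-symm} with $\ol{g}_{\infty}=0$; you also correctly spot that~\eqref{eq:stability-inf} fails for $k<l\le M$ and that one must use the relaxed condition~\eqref{eq:comp-of-g-bar}. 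The one step you skip is the additional hypothesis of Theorem~\ref{thm3}(ii): constructing $\pi$ as a limit of the finite-system stationary laws only produces a \emph{candidate}, and to conclude it is an actual stationary distribution one must verify that $\pi$-a.s. the gaps satisfy~\eqref{gapnice}. The paper does this via Lemma~\ref{satisfaction}(i), and with your computed rates it is immediate, since $\sup_{k}\la_k = 2M < \infty$; adding that one line closes the argument.
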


For $M = 1$, this is the infinite Atlas model, and the stationary distribution $\pi_M = \pi_1 = \otimes_{k=1}^{\infty}\Exp(2)$ is already known from \cite[Theorem 14]{PP2008}. It is worth noting that the \textsc{Harris} system of Brownian particles (independent Brownian motions $B_n, n \in \MZ$, starting from $B_n(0) = x_n$), in fact, has infinitely many stationary distributions for its gap process, \cite{Harris}. Indeed, a Poisson point process with constant intensity $\la$ is invariant with respect to this system for any $\la > 0$. Therefore, the product $\otimes_{n \in \MZ}\Exp(\la)$ is a stationary distribution for this system, for all $\la > 0$. 

We also direct the reader to our paper \cite{MyOwn13}, which is complementary to the current paper. In \cite{MyOwn13}, we find other stationary distributions for the gap process. Instead of stating the main result, we consider the particular case of the system~\eqref{eq:main-SDE-intro}. There, for every $a > 0$, the following is a stationary distribution for the gap process:
\begin{equation}
\label{eq:weird}
\pi_M(a) := \bigotimes\limits_{k=1}^{\infty}\Exp\left(2(k\wedge M) + ka\right).
\end{equation}
In particular, for the infinite Atlas model we have:
$$
\pi_1(a) := \bigotimes\limits_{k=1}^{\infty}\Exp\left(2 + ka\right).
$$
Note that the distribution~\eqref{eq:pi-M} can also be included in the family~\eqref{eq:weird}, if we let $a = 0$.

Other ordered particle systems derived from independent driftless Brownian motions were studied by \textsc{Arratia} in \cite{Arratia}, and by \textsc{Sznitman} in \cite{Sznitman1986}. Several other papers study connections between systems of queues and one-dimensional interacting particle systems: \cite{Kipnis, Ferrari1, Ferrari2, Timo}. Links to the directed percolation and the directed polymer models, as well as the GUE random matrix ensemble, can be found in \cite{OConnellYor}.

An important generalization of a finite system of competing Brownian particles is a {\it system with asymmetric collisions}, when, roughly speaking, ranked particles $Y_k$, have ``different mass'', and when they collide, they ``fly apart'' with ``different speed''. This generalization was introduced in \cite{KPS2012} for finite systems. We carry out this generalization for infinite systems, and prove weak existence (but not uniqueness) in Section 3. All results answering the questions (a) - (e) above are stated also for this general case of asymmetric collisions. 

There are other generalizations of competing Brownian particles: competing L\'evy particles, \cite{Shkolnikov2011}; {\it a second-order stock market model}, when the drift and diffusion coefficients depend on the name as well as the rank of the particle, \cite{2order, 5people}; competing Brownian particles with values in the positive orthant $\BR^N_+$, see \cite{FIKP2013}. Two-sided infinite systems $(X_i)_{i \in \MZ}$ of competing Brownian particles are studied in \cite{MyOwn11}. 

The proofs in this article rely heavily on {\it comparison techniques} for systems of competing Brownian particles, developed in \cite{MyOwn2}. 
%

\subsection{Organization of the paper} Section 2 is devoted to the necessary background: finite systems of competing Brownian particles. It does not contain any new results, just an outline of already known results. Section 3 introduces infinite systems of competing Brownian particles and states existence and uniqueness results (including Theorem~\ref{thm2}). In this section, we also generalize these comparison techniques for infinite systems. Section 4 deals with the gap process: stationary distributions and the questions of weak convergence as $t \to \infty$. In particular, we state Theorems~\ref{thm3} and~\ref{limitpointthm} and in this section. Section 5 contains results about triple collisions. Section 6 is devoted to proofs for most of the results. The Appendix contains some technical lemmas.

\section{Background: finite systems of competing Brownian particles}

In this section, we recall definitions and results which are already known.  First, as in \cite{BFK, MyOwn3}, we rigorously define finite systems of competing Brownian particles for the case of symmetric collisions, when the $k$th ranked particles moves as a Brownian motion with drift coefficient $g_k$ and diffusion coefficient $\si_k^2$. This gives us a system of {\it named} particles; we shall call them {\it classical systems of competing Brownian particles}. Then we find an equation for corresponding {\it ranked} particles, following \cite{BFK, 5people}. This gives us a motivation to introduce systems of ranked competing Brownian particles with asymmetric collisions, as in \cite{KPS2012}. Finally, we state results about the gap process: stationary distribution and convergence.

\subsection{Classical systems of competing Brownian particles} 

In this subsection, we use definitions from \cite{BFK}. Assume the usual setting: a filtered probability space $(\Oa, \CF, (\CF_t)_{t \ge 0}, \MP)$ with the filtration satisfying the usual conditions.  Let $N \ge 2$ (the number of particles). Fix parameters 
$$
g_1, \ldots, g_N \in \BR;\ \ \si_1, \ldots, \si_N > 0.
$$
We wish to define a system of $N$ Brownian particles in which the $k$th smallest particle moves according to a Brownian motion with drift $g_k$ and diffusion $\si_k^2$. We resolve ties in the lexicographic order, as described in the Introduction. 

\begin{defn} Take i.i.d. standard $(\CF_t)_{t \ge 0}$-Brownian motions $W_1, \ldots, W_N$. For a continuous $\BR^N$-valued process $X = (X(t),\ t \ge 0)$, $X(t) = (X_1(t), \ldots, X_N(t))'$, let us define $\mP_t,\ t \ge 0$, the {\it ranking permutation} for the vector $X(t)$: this is the permutation on $\{1, \ldots, N\}$ such that:

\medskip

(i) $X_{\mP_t(i)}(t) \le X_{\mP_t(j)}(t)$ for $1 \le i < j \le N$; 

\medskip

(ii) if $1 \le i < j \le N$ and $X_{\mP_t(i)}(t) = X_{\mP_t(j)}(t)$, then $\mP_t(i) < \mP_t(j)$. 

\medskip

Suppose the process $X$ satisfies the following SDE:
\begin{equation}
\label{mainSDE}
dX_i(t) = \SL_{k=1}^N1(\mP_t(k) = i)\left[g_k\, \md t + \si_k\, \md W_i(t)\right],\ \ i = 1, \ldots, N.
\end{equation}
Then this process $X$ is called a {\it classical system of $N$ competing Brownian particles} with {\it drift coefficients} $g_1, \ldots, g_N$ and {\it diffusion coefficients} $\si_1^2, \ldots, \si_N^2$. For $i = 1, \ldots, N$, the component $X_i = (X_i(t), t \ge 0)$ is called the {\it $i$th named particle}. For $k = 1, \ldots, N$, the process
$$
Y_k = (Y_k(t),\ t \ge 0),\ \ Y_k(t) := X_{\mP_t(k)}(t) \equiv X_{(k)}(t),
$$
is called the {\it $k$th ranked particle}. They satisfy
$Y_1(t) \le Y_2(t) \le \ldots \le Y_N(t)$, $t \ge 0$. If $\mP_t(k) = i$, then we say that the particle $X_i(t) = Y_k(t)$ at time $t$ has {\it name} $i$ and {\it rank} $k$. 
\label{classical}
\end{defn}

The coefficients of the SDE~\eqref{mainSDE} are piecewise constant functions of $X_1(t), \ldots, X_N(t)$; therefore, weak existence and uniqueness in law for such systems follow from \cite{BassPardoux}. 

\subsection{Asymmetric collisions}

In this subsection, we consider the model defined in \cite{KPS2012}: {\it finite systems of competing Brownian particles with asymmetric collisions}. For $k = 2, \ldots, N$, let the process $L_{(k-1, k)} = (L_{(k-1, k)}(t),\ t \ge 0)$ be the semimartingale local time at zero of the nonnegative semimartingale $Y_k - Y_{k-1}$. For notational convenience, we let $L_{(0, 1)}(t) \equiv 0$ and $L_{(N, N+1)}(t) \equiv 0$. Then for some i.i.d. standard Brownian motions $B_1, \ldots, B_N$, the ranked particles $Y_1, \ldots, Y_N$ satisfy the following equation:
\begin{equation}
\label{symmSDE}
Y_k(t) = Y_k(0) + g_kt + \si_kB_k(t) + \frac12L_{(k-1, k)}(t) - \frac12L_{(k, k+1)}(t),\ \ k = 1, \ldots, N.
\end{equation}
This was proved in \cite[Lemma 1]{5people}; see also \cite[Section 3]{BFK}. The process $L_{(k-1, k)}$ is called the {\it local time of collision between the particles $Y_{k-1}$ and $Y_k$}. The local time process $L_{(k-1, k)}$ has the following properties: $L_{(k-1, k)}(0) = 0$, $L_{(k-1, k)}$ is nondecreasing, and 
\begin{equation}
\label{localtime}
\int_0^{\infty}1(Y_{k}(t) \ne Y_{k-1}(t))\md L_{(k-1, k)}(t) = 0.
\end{equation}
If we change coefficients $1/2$ in~\eqref{symmSDE} to some other real numbers, we get the model with {\it asymmetric collisions} from the paper \cite{KPS2012}. The local times in this new model are split unevenly between the two colliding particles, as if these particles have different mass. 

Let us now formally define this model with asymmetric collisions. Let $N \ge 2$ be the quantity of particles. Fix real numbers $g_1, \ldots, g_N$ and positive real numbers $\si_1, \ldots, \si_N$, as before. In addition, fix real numbers $q^+_1$, $q^-_1, \ldots, q^+_N$, $q^-_N$, which satisfy the following conditions:
$$
q^+_{k+1} + q^-_k = 1,\ \ k = 1, \ldots, N-1;\ \ 0 < q^{\pm}_k < 1,\ \ k = 1, \ldots, N.
$$

\begin{defn} Take i.i.d. standard $(\CF_t)_{t \ge 0}$-Brownian motions $B_1, \ldots, B_N$. Consider a continuous adapted $\BR^N$-valued process 
$$
Y = (Y(t),\ t \ge 0),\ \ \ Y(t) = (Y_1(t), \ldots, Y_N(t))',
$$
and $N-1$ continuous adapted real-valued processes
$$
L_{(k-1, k)} = (L_{(k-1, k)}(t),\ t \ge 0),\ \ k = 2, \ldots, N,
$$
with the following properties:

\medskip

(i) $Y_1(t) \le \ldots \le Y_N(t),\ \ t \ge 0$;

\medskip

(ii) the process $Y$ satisfies the following system of equations:
\begin{equation}
\label{SDEasymm}
Y_k(t) = Y_k(0) + g_kt + \si_kB_k(t) + q^+_kL_{(k-1, k)}(t) - q^-_kL_{(k, k+1)}(t),\ \ \ k = 1, \ldots, N
\end{equation}
(we let $L_{(0, 1)}(t) \equiv 0$ and $L_{(N, N+1)}(t) \equiv 0$ for notational convenience);

\medskip

(iii) for each $k = 2, \ldots, N$, the process $L_{(k-1, k)} = (L_{(k-1, k)}(t),\ t \ge 0)$ has the properties mentioned above: $L_{(k-1, k)}(0) = 0$, $L_{(k-1, k)}$ is nondecreasing and satisfies~\eqref{localtime}.  

\medskip

Then the process $Y$ is called {\it a system of $N$ competing Brownian particles with asymmetric collisions}, with {\it drift coefficients} $g_1, \ldots, g_N$, {\it diffusion coefficients} $\si_1^2, \ldots, \si_N^2$, and {\it parameters of collision} $q^{\pm}_1,\ldots, q^{\pm}_N$. For each $k = 1, \ldots, N$, the process $Y_k = (Y_k(t), t \ge 0)$ is called the {\it $k$th ranked particle}. For $k = 2, \ldots, N$, the process $L_{(k-1, k)}$ is called the {\it local time of collision between the particles $Y_{k-1}$ and $Y_k$}. 
\label{asymmdefn} The Brownian motions $B_1, \ldots, B_N$ are called {\it driving Brownian motions} for this system $Y$. The process 
$L = \left(L_{(1, 2)}, \ldots, L_{(N-1, N)}\right)'$ is called the {\it vector of local times}. 
\end{defn}

The state space of the process $Y$ is $\mathcal W^N := \{y = (y_1, \ldots, y_N)' \in \BR^N\mid y_1 \le y_2 \le \ldots \le y_N\}$. Strong existence and pathwise uniqueness for $Y$ and $L$ are proved in \cite[Section 2.1]{KPS2012}.  

\subsection{The gap process for finite systems} The results of this subsection are taken from \cite{BFK, 5people, KPS2012, WilliamsSurvey}. However, we present an outline of proofs in Section 6 for completeness. 

\begin{defn} Consider a finite system (classical or ranked) of $N$ competing Brownian particles. Let 
$$
Z_k(t) = Y_{k+1}(t) - Y_k(t),\ \ k = 1, \ldots, N-1,\ \ t \ge 0.
$$
Then the process $Z = (Z(t), t \ge 0)$, $Z(t) = (Z_1(t), \ldots, Z_{N-1}(t))'$ is called the {\it gap process}. The component $Z_k = (Z_k(t), t \ge 0)$ is called the {\it gap between the $k$th and $k+1$st ranked particles}. 
\end{defn}

The following propositions about the gap process are already known. We present them in a slightly different form than that from the sources cited above; for the sake of completeness, we present short outlines of their proofs in Section 6. Let  
\begin{equation}
\label{R}
R = 
\begin{bmatrix}
1 & -q^-_2 & 0 & 0 & \ldots & 0 & 0\\
-q^+_2 & 1 & -q^-_3 & 0 & \ldots & 0 & 0\\
0 & -q^+_3 & 1 & -q^-_4 & \ldots & 0 & 0\\
\vdots & \vdots & \vdots & \vdots & \ddots & \vdots & \vdots\\
0 & 0 & 0 & 0 & \ldots & 1 & -q^-_{N-1}\\
0 & 0 & 0 & 0 & \ldots & -q^+_{N-1} & 1
\end{bmatrix},
\end{equation}
\begin{equation}
\label{mu}
\mu = \left(g_2 - g_1, g_3 - g_2, \ldots, g_N - g_{N-1}\right)'.
\end{equation}

\begin{prop} (i) The matrix $R$ is invertible, and $R^{-1} \ge 0$, with strictly positive diagonal elements $\left(R^{-1}\right)_{kk},\ k = 1, \ldots, N-1$. 

\medskip

(ii) The family of random variables $Z(t), t \ge 0$, is tight in $\BR^{N-1}_+$, if and only if $R^{-1}\mu < 0$. In this case, for every initial distribution of $Y(0)$ we have: 
$Z(t) \Ra \pi$ as $t \to \infty$, where $\pi$ is the unique stationary distribution of $Z$. 

\medskip

(iii) If, in addition, the {\upshape skew-symmetry condition} holds: 
\begin{equation}
\label{SS}
(q^-_{k-1} + q^+_{k+1})\si_k^2 = q^-_k\si^2_{k+1}  + q^+_k\si_{k-1}^2,\ \ k = 2,\ldots, N-1,
\end{equation}
then 
$$
\pi = \bigotimes\limits_{k=1}^{N-1}\CE(\la_k),\ \ \la_k = \frac{2}{\si_k^2 + \si_{k+1}^2}\left(-R^{-1}\mu\right)_k,\ \ k = 1, \ldots, N-1.
$$\label{basic}
\end{prop}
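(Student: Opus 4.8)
The plan is to recognize the gap process $Z$ as a semimartingale reflected Brownian motion (SRBM) in the orthant $\BR^{N-1}_+$ and then read off parts (i)--(iii) from the general theory of such processes. First I would subtract consecutive equations in~\eqref{SDEasymm}: for $Z_k = Y_{k+1} - Y_k$ one gets
\begin{equation*}
Z_k(t) = Z_k(0) + (g_{k+1}-g_k)t + \left(\si_{k+1}B_{k+1}(t) - \si_kB_k(t)\right) + L_{(k,k+1)}(t) - q^-_{k+1}L_{(k+1,k+2)}(t) - q^+_kL_{(k-1,k)}(t),
\end{equation*}
where the coefficient of the local time $L_{(k,k+1)}$ equals $q^+_{k+1}+q^-_k = 1$ by the standing constraint. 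In vector form this reads $Z(t) = Z(0) + \mu t + W(t) + RL(t)$, with $\mu$ as in~\eqref{mu}, $R$ as in~\eqref{R}, and $W = (W_1, \ldots, W_{N-1})'$, $W_k := \si_{k+1}B_{k+1} - \si_kB_k$, a driftless Brownian motion whose covariance matrix $A$ is tridiagonal with $A_{kk} = \si_k^2 + \si_{k+1}^2$ and $A_{k,k+1} = A_{k+1,k} = -\si_{k+1}^2$. Since $A = MM^T$ for the full rank $(N-1)\times N$ difference matrix $M$, it is positive definite, and the constraint~\eqref{localtime} shows $L_{(k,k+1)}$ increases only when $Z_k = 0$, so $Z$ is exactly the SRBM with data $(A, \mu, R)$.

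For part (i) I would write $R = I - Q$, where $Q \ge 0$ has $Q_{k,k-1} = q^+_k$, $Q_{k,k+1} = q^-_{k+1}$, and zeros elsewhere. The $j$th column sum of $Q$ equals $q^-_j + q^+_{j+1} = 1$ for interior $j$ and is strictly less than $1$ for $j \in \{1, N-1\}$, so $Q^T$ is a substochastic, irreducible matrix with at least one deficient row; by Perron--Frobenius $\rho(Q) = \rho(Q^T) < 1$. Hence $R$ is invertible with $R^{-1} = \SL_{n=0}^{\infty}Q^n \ge 0$, and the $n=0$ term contributes $1$ to each diagonal entry, giving $\left(R^{-1}\right)_{kk} \ge 1 > 0$; equivalently, $R$ is a nonsingular $M$-matrix. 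Part (ii) I would then obtain from the stability theory of SRBMs with a nonsingular $M$-matrix reflection (\cite{WilliamsSurvey, Chen1996}): such a process is positive recurrent, has a unique stationary distribution, and converges to it weakly from every initial condition precisely when $R^{-1}\mu < 0$, while otherwise the marginals escape to infinity so that $\{Z(t)\}$ cannot be tight. The intuition is the one-dimensional picture — reflected Brownian motion on $[0,\infty)$ is tight iff its drift is negative — transported to the vector setting by the change of variables $Z \mapsto R^{-1}Z$, which turns the drift $\mu$ into the effective drift $R^{-1}\mu$.

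For part (iii) the crux is to verify that the skew-symmetry condition~\eqref{SS} coincides with the Harrison--Williams condition
\begin{equation*}
2A = R\,\diag(A) + \diag(A)\,R^{T},
\end{equation*}
which guarantees a product-of-exponentials stationary density. Writing $D := \diag(A)$ and comparing entries of both sides, the diagonal identities hold automatically (because $R_{kk}=1$), and the off-diagonal identity reduces, after using $1 - q^+_{k+1} = q^-_k$ and $1 - q^-_{k+1} = q^+_{k+2}$, to exactly~\eqref{SS}. Granting this, the Harrison--Williams theorem gives $\pi = \bigotimes_k\CE(\la_k)$, and I would pin down the rates by inserting the candidate density $p(z)\propto e^{-\la\cdot z}$ into the basic adjoint relationship for the SRBM. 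The interior term forces $\tfrac12\la^TA\la + \mu\cdot\la = 0$, while the boundary analysis on each face $\{z_k=0\}$ — after integrating the tangential derivatives by parts along the face — collapses, via the skew-symmetry identity, to $RD\la = -2\mu$, that is $\la = -2D^{-1}R^{-1}\mu$. This is precisely the stated formula $\la_k = \tfrac{2}{\si_k^2+\si_{k+1}^2}(-R^{-1}\mu)_k$, and positivity of each $\la_k$ is exactly the stability condition $R^{-1}\mu < 0$ from part (ii).

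I expect the main obstacle to lie in part (iii): keeping careful track of the boundary (and codimension-two) terms in the basic adjoint relationship so that the reflection directions (the columns of $R$) interact correctly with the exponent vector $\la$, and checking that the skew-symmetry identity is exactly what makes those boundary contributions cancel. By contrast, part (ii) is essentially an invocation of SRBM ergodicity, and part (i) is routine linear algebra.
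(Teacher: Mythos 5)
Your proposal is correct and follows essentially the same route as the paper: identify the gap process $Z$ as an SRBM in $\BR^{N-1}_+$ with data $(\mu, A, R)$, invoke standard SRBM stability theory for part (ii), and match the skew-symmetry condition $RD + DR' = 2A$, $D = \diag(A)$, with~\eqref{SS} to obtain the product-of-exponentials form with $\la = -2D^{-1}R^{-1}\mu$. The only difference is one of packaging: where the paper cites \cite{KPS2012, MyOwn3} for the SRBM identification and part (i), and \cite{WilliamsSurvey} for parts (ii) and (iii), you derive the SRBM equation by subtracting consecutive equations of~\eqref{SDEasymm}, prove (i) directly via Perron--Frobenius applied to $R = I - Q$, and sketch the Harrison--Williams basic-adjoint-relationship computation yourself — all of which is correct.
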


For symmetric collisions, we can refine Proposition~\ref{basic}. Recall the notation from~\eqref{eq:stability}:
$$
\ol{g}_k := \frac{g_1 + \ldots + g_k}k,\ \ k = 1, \ldots, N.
$$

\begin{prop} \label{basicsymm} For the case of symmetric collisions $q^{\pm}_k = 1/2,\ k = 1, \ldots, N$, we have:

\medskip

(i) $-R^{-1}\mu = 2\left(g_1 - \ol{g}_N, g_1 + g_2 - 2\ol{g}_N, \ldots, g_1 + g_2 + \ldots + g_{N-1} - (N-1)\ol{g}_N\right)'$;

\medskip

(ii) the tightness condition from Proposition~\ref{basic} can be written as
$$
\ol{g}_k > \ol{g}_N,\ \ k = 1, \ldots, N-1;
$$

\medskip

(iii) the skew-symmety condition can be equivalently written as 
$$
\si_{k+1}^2 - \si_k^2 = \si_k^2 - \si_{k-1}^2,\ \ k = 2, \ldots, N-1;
$$
in other words, $\si_k^2$ must linearly depend on $k$;

\medskip

(iv) if both the tightness condition and the skew-symmetry condition are true, then 
$$
\pi = \bigotimes\limits_{k=1}^{N-1}\CE(\la_k),\ \ \la_k := \frac{4k}{\si_k^2 + \si_{k+1}^2}\left(\ol{g}_k - \ol{g}_N\right).
$$
\end{prop}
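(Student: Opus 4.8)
The plan is to deduce all four parts by specializing Proposition~\ref{basic} to the symmetric case $q^{\pm}_k = 1/2$ and carrying out explicit computations; the only structural input I will invoke is the invertibility of $R$ from Proposition~\ref{basic}(i). To set up the bookkeeping, I write $s_k := g_1 + \ldots + g_k$, so that $\ol{g}_k = s_k/k$ and in particular $N\ol{g}_N = s_N$, and I introduce the candidate vector $w \in \BR^{N-1}$ defined by $w_k := 2(s_k - k\ol{g}_N)$, which is exactly the vector asserted in part (i).

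For part (i), rather than inverting $R$ directly I would verify the identity $Rw = -\mu$; since $R$ is invertible this forces $w = -R^{-1}\mu$, giving the claimed formula. With $q^{\pm}_k = 1/2$ the matrix $R$ is tridiagonal with $1$ on the diagonal and $-1/2$ on both off-diagonals, so for an interior index $k$ I compute $(Rw)_k = w_k - \frac12 w_{k-1} - \frac12 w_{k+1}$; substituting the definition of $w$, the $\ol{g}_N$ contributions cancel (because $-2k + (k-1) + (k+1) = 0$) and what remains telescopes to $2s_k - s_{k-1} - s_{k+1} = g_k - g_{k+1} = -\mu_k$. The two boundary rows must be checked separately: row $1$ gives $(Rw)_1 = w_1 - \frac12 w_2 = 2s_1 - s_2 = -\mu_1$, and row $N-1$, using $N\ol{g}_N = s_N$, gives $(Rw)_{N-1} = -\frac12 w_{N-2} + w_{N-1} = 2s_{N-1} - s_{N-2} - s_N = -\mu_{N-1}$. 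This establishes $Rw = -\mu$ and hence (i).

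Parts (ii) and (iii) are then immediate. For (ii), the tightness condition $R^{-1}\mu < 0$ of Proposition~\ref{basic} reads $-R^{-1}\mu > 0$, i.e. $w > 0$ componentwise; since $w_k = 2k(\ol{g}_k - \ol{g}_N)$, this is precisely $\ol{g}_k > \ol{g}_N$ for $k = 1, \ldots, N-1$. For (iii), I substitute $q^{\pm}_k = 1/2$ into the skew-symmetry condition~\eqref{SS}: its left side becomes $(\frac12 + \frac12)\si_k^2 = \si_k^2$ and its right side becomes $\frac12\si_{k+1}^2 + \frac12\si_{k-1}^2$, so~\eqref{SS} collapses to $2\si_k^2 = \si_{k-1}^2 + \si_{k+1}^2$, which is exactly the stated linearity $\si_{k+1}^2 - \si_k^2 = \si_k^2 - \si_{k-1}^2$.

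Finally, part (iv) follows by inserting the formula from (i) into Proposition~\ref{basic}(iii). With $q^{\pm}_k = 1/2$ the rate there is $\la_k = \frac{2}{\si_k^2 + \si_{k+1}^2}(-R^{-1}\mu)_k = \frac{2}{\si_k^2 + \si_{k+1}^2}\,w_k$, and since $w_k = 2k(\ol{g}_k - \ol{g}_N)$ this equals $\frac{4k}{\si_k^2 + \si_{k+1}^2}(\ol{g}_k - \ol{g}_N)$, as claimed. I do not anticipate a genuine obstacle: the whole argument is an elementary telescoping computation, and the only points requiring care are the bookkeeping at the two boundary rows of the tridiagonal system and the consistent use of the identity $N\ol{g}_N = s_N$.
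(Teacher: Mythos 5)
Your proposal is correct, and it takes the same route the paper intends: Proposition~\ref{basicsymm} is stated as a computational specialization of Proposition~\ref{basic} to $q^{\pm}_k = 1/2$ (the paper defers the details to the cited references rather than writing them out), and your verification that $Rw = -\mu$ for $w_k = 2k\left(\ol{g}_k - \ol{g}_N\right)$, followed by the substitutions in parts (ii)--(iv), is exactly the computation being invoked. The telescoping check, including both boundary rows and the identity $N\ol{g}_N = s_N$, is carried out correctly.
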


\begin{exm} If $g_1 = 1, \ g_2 = g_3 = \ldots = g_N = 0$, and $\si_1 = \si_2 = \ldots = \si_N = 1$ (the {\it finite Atlas model} with $N$ particles), then \label{exmatlas}
$$
\pi = \bigotimes\limits_{k=1}^{N-1}\CE\left(2\cdot\frac{N-k}N\right).
$$
\end{exm}

The following is a technical lemma, with a (very short) proof in Section 6.

\begin{lemma} Take a finite system of competing Brownian particles (either classical or ranked). For every $t > 0$, the probability that there is a tie at time $t$ is zero.
\label{notie}
\end{lemma}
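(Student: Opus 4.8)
The plan is to reduce a tie to the vanishing of one of the finitely many gaps and then to show that each gap, at a fixed time, avoids zero almost surely. A tie at time $t$ means $Y_k(t)=Y_{k+1}(t)$, i.e. $Z_k(t)=0$, for some $k\in\{1,\dots,N-1\}$; since there are finitely many gaps it suffices to prove $\MP(Z_k(t)=0)=0$ for each fixed $k$ and each $t>0$. I would work with the ranked picture, where by \eqref{SDEasymm} (or \eqref{symmSDE}) the martingale part of $Z_k=Y_{k+1}-Y_k$ is $\si_{k+1}B_{k+1}-\si_kB_k$. Because $B_k,B_{k+1}$ are independent standard Brownian motions, the quadratic variation $\langle Z_k\rangle_s=(\si_k^2+\si_{k+1}^2)\,s$ is deterministic with a strictly positive rate; this nondegeneracy is what drives the argument.

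First I would establish the almost-every-$t$ statement by an occupation-time computation. The occupation-times formula for the continuous semimartingale $Z_k$ gives $\int_0^t \mathbf 1(Z_k(s)=0)\,d\langle Z_k\rangle_s=\int_{\BR}\mathbf 1(a=0)\,\ell^a_t\,da=0$, where $\ell^a_t$ is the local time density of $Z_k$; dividing by the positive constant $\si_k^2+\si_{k+1}^2$ yields $\int_0^t\mathbf 1(Z_k(s)=0)\,ds=0$ almost surely, so $Z_k$ spends zero Lebesgue time at the origin. Fubini's theorem then gives $\int_0^\infty\MP(Z_k(s)=0)\,ds=\EE\int_0^\infty\mathbf 1(Z_k(s)=0)\,ds=0$, whence $\MP(Z_k(s)=0)=0$ for $\mes$-a.e.\ $s>0$, and consequently the probability of a tie is zero for almost every $t>0$.

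The main obstacle is upgrading this from almost every $t$ to every $t$, since weak (Feller) continuity of $s\mapsto\mathrm{Law}(Y(s))$ only yields upper semicontinuity of $s\mapsto\MP(\text{tie at }s)$ on the closed boundary set, which is the wrong direction to conclude from a dense zero-set. To close this gap I would show that the one-dimensional marginal $\mathrm{Law}(Z(t))$ is absolutely continuous with respect to $\mes$ for every $t>0$, so that it assigns zero mass to each hyperplane $\{z_k=0\}$. This can be done by viewing the gap process as a semimartingale reflecting Brownian motion in the orthant with reflection matrix $R$ from \eqref{R} (cf.\ Proposition~\ref{basic}) and invoking the standard fact that such an $\SRBM$ has transition probabilities that do not charge the boundary faces at any fixed positive time; alternatively, one may remove the drift by a Girsanov change of measure (the drift is bounded, so Novikov applies) and use the Dambis--Dubins--Schwarz representation of the resulting continuous martingale, together with its strictly positive deterministic clock $(\si_k^2+\si_{k+1}^2)t$, to rule out an atom at $0$. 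The delicate point in this second route is that the finite-variation (local-time) part of $Z_k$ is correlated with its martingale part, so the argument must exploit that the quadratic-variation clock is bounded and advances at a fixed rate independent of the sign of the increments.
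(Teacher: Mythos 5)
Your proposal is correct, and its decisive step is exactly the paper's argument: a tie at time $t$ means the gap process, which is an SRBM in $\BR^{N-1}_+$ with reflection matrix $R$ from~\eqref{R}, lies on the boundary of the orthant, and the boundary property of \cite{RW1988} gives $\MP\left(Z(t) \in \pa\BR^{N-1}_+\right) = 0$ for every fixed $t > 0$. The preliminary occupation-time/Fubini computation (yielding only almost every $t$) and the alternative Girsanov/DDS route are unnecessary once that fact is invoked — and the latter route, as you yourself note, is the one place where the correlation between the local-time part and the martingale part of $Z_k$ would require genuine additional work.
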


\section{Existence and Uniqueness Results for Infinite Systems} 

In this section, we first state existence results for classical infinite systems of competing Brownian particles (recall that {\it classical} means particles with individual names rather than ranks): Theorem~\ref{existenceinfclassical}, Theorem~\ref{thm:Girsanov-existence}, and Theorem~\ref{convnames}. Then we define infinite {\it ranked} systems with asymmetric collisions. We prove an existence theorem: Theorem~\ref{thm2} for these systems. Unfortunately, we could not prove uniqueness: we just construct a copy of an infinite ranked system using approximation by finite ranked systems. This copy is called an {\it approximative version} of the infinite ranked system. We also develop comparison techniques for infinite systems, which parallel similar techniques for finite systems from \cite{MyOwn2}. 


Assume the usual setting: $(\Oa, \CF, (\CF_t)_{t \ge 0}, \MP)$, with the filtration satisfying the usual conditions. 

\subsection{Infinite classical systems} Fix parameters $g_1, g_2, \ldots \in \BR$ and $\si_1, \si_2, \ldots > 0$. We say that a sequence $(x_n)_{n \ge 1}$ of real numbers is {\it rankable} if there exists a one-to-one mapping (permutation) $\mP : \{1, 2, 3, \ldots\} \to \{1, 2, 3, \ldots\}$ which {\it ranks the components of $x$}: 
$$
x_{\mP(i)} \le x_{\mP(j)}\ \ \mbox{for}\ \ i, j = 1, 2, \ldots,\ \ i < j.
$$
As in the case of finite systems, we {\it resolve ties} (when $x_i = x_j$ for $i \ne j$) in the lexicographic order: we take a permutation $\mP$ which ranks the components of $x$, and, in addition, if $i < j$ and $x_{\mP(i)} = x_{\mP(j)}$, then $\mP(i) < \mP(j)$. There exists a unique such permutation $\mP$, which is called {\it the ranking permutation} and is denoted by $\mP_x$. For example, if $x = (2, 2, 1, 4, 5, 6, 7, \ldots)'$ (that is, $x(i) = i$ for $i \ge 4$), then $\mP_x(1) = 3,\ \mP_x(2) = 1,\ \mP_x(3) = 2,\ \mP_x(n) = n,\ n \ge 4$. Not all sequences of real numbers are rankable: for example, $x = (x_i = i^{-1},\, i \ge 1)$, is not rankable. 

\begin{defn} Consider an $\BR^{\infty}$-valued process 
$$
X = (X(t), t \ge 0),\ X(t) = (X_n(t))_{n \ge 1},
$$
with continuous adapted components, such that for every $t \ge 0$, the sequence $X(t) = (X_n(t))_{n \ge 1}$ is rankable. Let $\mP_t$ be the ranking permutation of $X(t)$. Let $W_1, W_2, \ldots$ be i.i.d. standard $(\CF_t)_{t \ge 0}$-Brownian motions. Assume that the process $X$ satisfies an SDE
$$
dX_i(t) = \SL_{k=1}^{\infty}1(\mP_t(k) = i)\left(g_k\md t + \si_k\md W_i(t)\right),\ \ i = 1, 2, \ldots
$$
Then the process $X$ is called an {\it infinite classical system of competing Brownian particles} with {\it drift coefficients} $(g_k)_{k \ge 1}$ and {\it diffusion coefficients} $(\si_k^2)_{k \ge 1}$. For each $i = 1, 2, \ldots$, the component $X_i = (X_i(t), t \ge 0)$ is called the {\it $i$th named particle}. If we define $Y_k(t) \equiv X_{\mP_t(k)}(t)$ for $t \ge 0$ and $k = 1, 2, \ldots$, then the process $Y_k = (Y_k(t), t \ge 0)$ is called the {\it $k$th ranked particle}. The $\BR^{\infty}_+$-valued process 
$$
Z = (Z(t), t \ge 0),\ \ Z(t) = (Z_k(t))_{k \ge 1},  
$$
defined by 
$$
Z_k(t) = Y_{k+1}(t) - Y_k(t),\ \ k = 1, 2, \ldots,\ t \ge 0,
$$
is called the {\it gap process}. If $X(0) = x \in \BR^{\infty}$, then we say that the system $X$ {\it starts from $x$}. This system is called {\it locally finite} if for any $u \in \BR$ and $T > 0$ there a.s. exists only finitely many $i \ge 1$ such that $\min_{[0, T]}X_i(t) \le u$. 
\label{defninfclassical}
\end{defn}

The following existence and uniqueness theorem was partially proved in \cite{IKS2013} and \cite{Shkolnikov2011}. We restate it here in a different form.

\begin{thm} Suppose $x \in \BR^{\infty}$ is a vector which satisfies the condition~\eqref{seriesalpha}. Assume also that there exists $n_0 \ge 1$ for which 
$$
g_{n_0+1} = g_{n_0+2} = \ldots\ \ \mbox{and}\ \ \si_{n_0+1} = \si_{n_0+2} = \ldots > 0.
$$
Then, in a weak sense there exists an infinite classical system of competing Brownian particles with drift coefficients $(g_k)_{k \ge 1}$ and diffusion coefficients $(\si_k^2)_{k \ge 1}$, starting from $x$, and it is unique in law. 
\label{existenceinfclassical}
\end{thm}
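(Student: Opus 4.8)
The plan is to construct the infinite system as a limit of finite systems and to prove uniqueness by squeezing an arbitrary weak solution between such finite approximations, with the stabilization of the coefficients past rank $n_0$ and the far-apart condition~\eqref{seriesalpha} doing the essential work. First I would set up finite approximations: for each $N \ge n_0$, let $X^{(N)}$ be the finite classical system of $N$ competing Brownian particles with coefficients $g_1, \ldots, g_N$ and $\si_1, \ldots, \si_N$, started from $(x_1, \ldots, x_N)$ and driven by $W_1, \ldots, W_N$. These exist and are unique in law by the Bass--Pardoux result cited after Definition~\ref{classical}.

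The far-apart condition is what makes the limit nontrivial and rankable, so I would extract its consequences first, uniformly in $N$. A standard Gaussian tail estimate shows that the probability that a particle started at height $x_i$ dips below a fixed level $u$ before time $T$ is at most $C_{T,u}\,e^{-\al x_i^2}$ for a suitable $\al > 0$; here the stabilized tail coefficients $g_{n_0+1}$ and $\si_{n_0+1}$ furnish a single comparison Brownian motion valid for all high ranks, so the bound is uniform over $N$ and over the high-index particles. Since $\sum_i e^{-\al x_i^2} < \infty$, Borel--Cantelli yields that almost surely only finitely many particles ever fall below any fixed level on $[0,T]$. Next, using the comparison techniques of \cite{MyOwn2}, I would show that for each fixed rank $k$ the ranked trajectories $Y_k^{(N)}$ converge as $N \to \infty$ (appending a particle high above perturbs a fixed ranked particle only through one added collision local time, which gives monotonicity in $N$). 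The uniform local finiteness just established stabilizes the ranking permutation $\mP_t$ around any finite rank, so the named trajectories $X_i^{(N)}$ converge as well, the piecewise-constant drift and diffusion are unambiguous in the limit, and passing the SDE and the ranking through the limit produces a weak solution $X^{(\infty)}$ starting from $x$ in the sense of Definition~\ref{defninfclassical}.

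For uniqueness in law I would show that every weak solution coincides with $X^{(\infty)}$ via a sandwich: comparison bounds the ranked particles of an arbitrary solution between those of finite truncations from below and an auxiliary homogeneous system from above, and the far-apart tail forces the two bounds to the same limit. Here the hypothesis $g_k \equiv g_{n_0+1}$ and $\si_k \equiv \si_{n_0+1}$ for $k > n_0$ is indispensable, because after a Girsanov shift removing the common drift $g_{n_0+1}$ it lets one replace the infinitely many high-rank particles by a reference system of i.i.d. driftless Brownian motions of common variance $\si_{n_0+1}^2$, whose rankability and comparison properties are controlled. The main obstacle is precisely this uniqueness step: extending the finite-system comparison inequalities to the infinite regime and controlling the cumulative push of infinitely many high-rank particles on the low ranks. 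Existence is comparatively routine once the Borel--Cantelli local-finiteness estimate is in hand; the delicate point is closing the sandwich, which is exactly where the constant-coefficient tail and the summability of $e^{-\al x_i^2}$ must be combined.
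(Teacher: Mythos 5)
Your existence construction is sound, but it is doing work the paper never attempts here: the paper's proof of Theorem~\ref{existenceinfclassical} is a pure reduction to the weak existence and uniqueness theorem of \cite{IKS2013}. All it does is verify the hypothesis of that theorem, namely the summability condition~\eqref{seriescomplex}, which follows from the Gaussian tail bound of Lemma~\ref{lemma:estimate-special} together with Lemma~\ref{lemma:special-series} applied to a vector $x$ satisfying~\eqref{seriesalpha}. Your finite-approximation construction of a weak solution (tail estimates, Borel--Cantelli local finiteness, monotone comparison in $N$) is essentially the content of Theorem~\ref{convnames} and Theorem~\ref{thm2}, so that half of your proposal is fine, though redundant relative to the paper's route.

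The genuine gap is the uniqueness step. Your sandwich argument requires comparison inequalities between an \emph{arbitrary} weak solution of the infinite system and finite systems driven by the same Brownian motions, but the comparison results you invoke (\cite{MyOwn2}, reproduced here as Corollary~\ref{comparinfcor}--Corollary~\ref{comparinfcordrifts}) are proved for finite systems and, by passage to the limit, for \emph{approximative} versions only; nothing licenses applying them to a general weak solution. This is precisely the obstruction the paper acknowledges: it cannot prove uniqueness for the infinite ranked systems of Theorem~\ref{thm2} by approximation, and for classical systems it defers uniqueness entirely to \cite{IKS2013, Shkolnikov2011}, whose proofs use different machinery. Moreover, even granting the comparison, your sandwich is not closed: coupling to the truncations $Y^{(N)}$ would give bounds on one side only (note the direction is the opposite of what you wrote --- by~\eqref{659}, adding particles on top pushes ranked particles \emph{down}, so finite truncations bound an infinite system from \emph{above}), and the matching bound from the other side, which you attribute to a Girsanov shift and ``the far-apart tail forces the two bounds to the same limit,'' is asserted rather than argued. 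Quantifying the cumulative downward push of the infinitely many high-rank particles on a fixed low rank is exactly the hard part of uniqueness, and the proposal contains no mechanism for it; as written, the argument would at best show that any weak solution lies below the approximative version, which does not identify its law.
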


Let us also show a different existence and uniqueness result, analogous to \cite[Lemma 11]{PP2008}. 

\begin{thm}
\label{thm:Girsanov-existence}
Suppose $x \in \BR^{\infty}$ is a vector which satisfies the condition~\eqref{seriesalpha}. Assume also that 
$$
\si_n = 1,\ \ n \ge 1;\ \ \mbox{and}\ \ G := \SL_{n = 1}^{\infty}g_n^2 < \infty.
$$
Then in a weak sense there exists an infinite classical system of competing Brownian particles with drift coefficients $(g_k)_{k \ge 1}$ and diffusion coefficients $(\si_k^2)_{k \ge 1}$, starting from $x$, and it is unique in law. 
\end{thm}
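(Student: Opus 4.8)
The plan is to realize the desired system as a Girsanov transform of the driftless system, exploiting that all diffusion coefficients equal $1$ so that the two systems share the same (identity) diffusion structure and differ only through a rank-dependent drift. First I would construct the base system: with $g_k \equiv 0$ and $\si_k \equiv 1$ the process $X^0_i(t) := x_i + W_i(t)$ is an infinite classical system starting from $x$, and by Theorem~\ref{existenceinfclassical} (applied with constant coefficients, $n_0 = 1$) it exists and is unique in law; in particular $X^0(t)$ is $\MP$-a.s.\ rankable for every $t \ge 0$, with ranking permutation $\mP^0_t$. I would then introduce the adapted drift $b_i(t) := g_{(\mP^0_t)^{-1}(i)}$, the drift that the named particle $i$ ought to feel given its current rank. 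The key identity, coming from the fact that $\mP^0_t$ is a bijection, is that summing over names equals summing over ranks:
$$
\SL_{i=1}^{\infty}b_i(t)^2 = \SL_{k=1}^{\infty}g_k^2 = G, \qquad t \ge 0.
$$

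Next I would carry out the change of measure. The partial sums $\SL_{i=1}^{n}\int_0^t b_i\,\md W_i$ are Cauchy in $L^2(\MP)$, since the expected square of an increment is the tail $\SL_{i>n}\int_0^t \ME b_i^2\,\md s$ of the finite total $Gt$; hence the limit $M = (M_t)$ is a continuous $L^2$-martingale with $\langle M\rangle_t = Gt$. Because $\langle M\rangle_t$ is deterministic, Novikov's criterion applies, $\ME\exp(\tfrac12\langle M\rangle_t) = \exp(Gt/2) < \infty$, so the stochastic exponential $\exp(M_t - \tfrac12 Gt)$ is a genuine $\MP$-martingale. This defines a consistent family of measures on each $\CF_t$ and hence a single measure $\MP'$ on $\CF_\infty$. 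By Girsanov the processes $\tilde W_i(t) := W_i(t) - \int_0^t b_i\,\md s$ are $\MP'$-martingales with $\langle \tilde W_i, \tilde W_j\rangle_t = \delta_{ij}t$, so by Lévy's characterization they are i.i.d.\ $\MP'$-Brownian motions. Since the change of measure leaves trajectories (and therefore $\mP^0_t$) untouched, under $\MP'$ the same process $X^0$ satisfies $\md X^0_i = b_i\,\md t + \md\tilde W_i = \SL_k 1(\mP^0_t(k) = i)g_k\,\md t + \md\tilde W_i$, which is exactly the required SDE; rankability and local finiteness survive because $\MP' \sim \MP$ on every $\CF_t$. This establishes weak existence.

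For uniqueness in law I would run the argument in reverse. Given any weak solution with law $\MP_1$, set $B_i(t) = X_i(t) - x_i - \int_0^t b_i\,\md s$ (i.i.d.\ Brownian motions under $\MP_1$, with $b_i$ the same measurable path-functional as above), form $\tilde M_t = \SL_i \int_0^t b_i\,\md B_i$ with $\langle\tilde M\rangle_t = Gt$, and use Novikov once more to define $\MQ$ by $\md\MQ/\md\MP_1|_{\CF_t} = \exp(-\tilde M_t - \tfrac12 Gt)$. Under $\MQ$ the coordinates $X_i(t) - x_i$ are i.i.d.\ Brownian motions, so $\MQ$ must be the unique law of the driftless system. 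Since $\tilde M_t = \SL_i \int_0^t b_i\,\md X_i - Gt$ is a fixed measurable functional of the path, the reciprocal density $\md\MP_1/\md\MQ|_{\CF_t} = \exp(\tilde M_t + \tfrac12 Gt)$ is determined by $\MQ$ and the path alone; the identical computation applies to any second solution $\MP_2$, forcing $\MP_1 = \MP_2$ on each $\CF_t$ and hence everywhere.

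The main obstacle is the infinite-dimensional Girsanov step: one must justify that the infinite sum defining $M$ converges to an honest continuous martingale and, crucially, that its exponential is a \emph{true} martingale rather than a strict local martingale. This is precisely where the hypotheses pull their weight — $\si_n \equiv 1$ guarantees that the base and target processes differ only by drift, so a single change of measure relates them, while $G < \infty$ makes $\langle M\rangle_t = Gt$ both finite and deterministic, delivering Novikov's condition essentially for free. I would take some care to confirm that $b_i$ is a genuinely adapted functional (the rank of particle $i$ is measurable in $X^0(t)$) and that the consistency of the densities indeed produces one measure $\MP'$ on $\CF_\infty$.
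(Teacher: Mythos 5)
Your proposal is correct, and at the top level it is the same strategy the paper uses (both are modeled on Lemma 11 of Pal--Pitman): realize the system as a Girsanov transform of the driftless system $X_i = x_i + W_i$, using $\si_n \equiv 1$ so that only the drift changes and $G < \infty$ so that the Girsanov martingale has deterministic bracket $Gt$. The genuine difference is in how the infinite-sum martingale is shown to exist, and this is where your route is slicker. The paper builds $M_\infty$ as the limit of \emph{rank-indexed} partial sums $M_n = \SL_{k \le n}\SL_{i}\int_0^t g_k 1(\mP_s(k) = i)\,\md W_i(s)$; since each rank-$k$ term is itself an infinite sum over names, the paper must prove (its Lemma~\ref{lemma:special-M}) that truncating in the name index is harmless, which requires the probabilistic estimate $\MP(A_j) \to 0$ --- particles starting far to the right are unlikely to ever achieve rank $\le n$ before time $T$ --- proved via Gaussian tail bounds and the condition~\eqref{seriesalpha}. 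Your \emph{name-indexed} partial sums $\SL_{i \le n}\int_0^t b_i\,\md W_i$ avoid this entirely: each integrand $b_i$ is a single bounded adapted process, and the Cauchy property in $L^2$ falls out of the bijection identity $\SL_i b_i(t)^2 = \SL_k g_k^2 = G$, since $\SL_i \ME\int_0^t b_i^2\,\md s = Gt$ makes the tails vanish. So the paper's hardest estimate is replaced by soft bookkeeping; the only place your argument still leans on the configuration geometry is the initial rankability/local-finiteness of the driftless system (which you correctly delegate to Theorem~\ref{existenceinfclassical}, though citing the paper's Gaussian-tail lemmas plus Borel--Cantelli directly would be cleaner than relying on measurability of the ``rankable for all $t$'' event under equality in law). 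A second, smaller difference: you spell out uniqueness via the reverse change of measure (mapping any weak solution back to the i.i.d.\ Brownian law and recovering $\MP_1$ from the path-functional density), whereas the paper compresses this into ``the rest follows from Girsanov theorem''; your explicit version is a welcome addition, modulo the standard caveat that stochastic integrals are path functionals only up to null sets, which is harmless here because the two measures are equivalent on each $\CF_t$.
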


Now, let us define an {\it approximative version} of an infinite classical system. 
Fix parameters $(g_n)_{n \ge 1}$ and $(\si_n^2)_{n \ge 1}$ and an initial condition $x = (x_i)_{i \ge 1}$. For each $N \ge 1$, consider a finite system of $N$ competing Brownian particles 
$$
X^{(N)} = \left(X^{(N)}_1, \ldots, X^{(N)}_N\right)'
$$
with drift coefficients $(g_n)_{1 \le n \le N}$ and diffusion coefficients $(\si_n^2)_{1 \le n \le N}$, starting from $[x]_N$. Let 
$$
Y^{(N)} = \left(Y^{(N)}_1, \ldots, Y^{(N)}_N\right)'
$$
be the ranked version of this system. Take an increasing sequence $(N_j)_{j \ge 1}$.

\begin{defn} Consider a version of the infinite classical system $X = (X_i)_{i \ge 1}$ of competing Brownian particles with parameters $(g_n)_{n \ge 1}$, $(\si_n^2)_{n \ge 1}$, starting from $x$. Let $Y_k$ be the $k$th ranked particle. Take an increasing sequence $(N_j)_{j \ge 1}$ of positive integers. Assume for every $T > 0$ and $M \ge 1$, weakly in $C([0, T], \BR^{2M})$, we have:
$$
\left(X^{(N_j)}_1, \ldots, X^{(N_j)}_M, Y^{(N_j)}_1, \ldots, Y^{(N_j)}_M\right)' 
\Ra \left(X_1, \ldots, X_M, Y_1, \ldots, Y_M\right)'.
$$
Then $X$ is called an {\it approximative version} of this infinite classical system, corresponding to the {\it approximation sequence} $(N_j)_{j \ge 1}$. 
\end{defn}

We prove weak existence (but not uniqueness in law) under the following conditions, which are slightly more general than the ones in Theorem~\ref{existenceinfclassical} and Theorem~\ref{thm:Girsanov-existence}. 

\begin{thm}
\label{convnames}
Consider parameters $(g_n)_{n \ge 1}$ and $(\si_n^2)_{n \ge 1}$ which satisfy 
\begin{equation}
\label{eq:bdd-parameters}
\ol{g} := \sup\limits_{n \ge 1}|g_n| < \infty,\ \ \mbox{and}\ \ \ol{\si}^2 := \sup\limits_{n \ge 1}\si_n^2 < \infty.
\end{equation}
Take initial conditions $x = (x_i)_{i \ge 1}$ satisfying the conditions~\eqref{seriesalpha}. 
Fix an increasing sequence $(N_j)_{j \ge 1}$. Then there exists a subsequence $(N'_j)_{j \ge 1}$ which serves as an approximation sequence for an approximative version $X$ of the infinite classical system of competing Brownian particles with parameters $(g_n)_{n \ge 1}$, $(\si_n^2)_{n \ge 1}$, starting from $x$. 
\end{thm}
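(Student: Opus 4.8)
The plan is to establish tightness of the finite-dimensional projections in $C([0,T],\BR^{2M})$, extract a convergent subsequence by a diagonal argument, and then identify the weak limit as a genuine infinite classical system. Throughout I couple the finite systems so that $X^{(N)}_i$ is driven by the same Brownian motion $W_i$ for every $N \ge i$. The named particles are the easy part: each $X^{(N)}_i$ is a continuous semimartingale
$$
X^{(N)}_i(t) = x_i + \int_0^t \beta^{(N)}_i(s)\,\md s + \int_0^t \rho^{(N)}_i(s)\,\md W_i(s),
$$
whose drift satisfies $|\beta^{(N)}_i| \le \ol g$ and whose diffusion satisfies $|\rho^{(N)}_i| \le \ol\si$ uniformly in $N$ and $i$, by~\eqref{eq:bdd-parameters}. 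A fourth-moment (Kolmogorov--Chentsov) estimate then gives $\ME|X^{(N)}_i(t)-X^{(N)}_i(s)|^4 \le C(t-s)^2$ with $C$ independent of $N$, hence tightness of $(X^{(N)}_1,\ldots,X^{(N)}_M)$ in $C([0,T],\BR^M)$ for each fixed $M$ and $T$.

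The ranked particles require the far-apart hypothesis~\eqref{seriesalpha}. Dominating $X^{(N)}_i$ from below by $x_i - \ol g T + \int_0^t \rho^{(N)}_i\,\md W_i$ and applying the Dubins--Schwarz representation to the martingale part (whose quadratic variation is at most $\ol\si^2 T$), I can bound the level-crossing probability $\MP(\min_{[0,T]}X^{(N)}_i \le u)$ by a Gaussian tail of the form $C\,\Psi\!\bigl((x_i - \ol g T - u)/(\ol\si\sqrt T)\bigr)$, which by~\eqref{seriesalpha} is summable in $i$ uniformly in $N$. Hence for every $\eps > 0$ there is $M' = M'(\eps, u)$ with $\MP(\exists\, i > M' : \min_{[0,T]}X^{(N)}_i \le u) \le \eps$. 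Choosing $u$ large enough that $\MP(\max_{[0,T]}Y^{(N)}_M \ge u) \le \eps$ (possible because $Y^{(N)}_M = X^{(N)}_{(M)} \le \max_{i \le M}X^{(N)}_i$ is bounded above in probability, uniformly in $N$), on the complementary event the bottom $M$ ranked trajectories coincide with the $M$ lowest order statistics of $X^{(N)}_1,\ldots,X^{(N)}_{M'}$. Since sorting is a continuous (indeed Lipschitz) map on path space, this expresses $(Y^{(N)}_1,\ldots,Y^{(N)}_M)$ as a continuous image of a tight family off an event of probability at most $2\eps$; letting $\eps \downarrow 0$ yields tightness of the ranked projections, and hence joint tightness of the full $2M$-tuple.

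With tightness in hand for every pair $(M,T)$, a diagonal argument over a countable exhausting family of such pairs produces a single subsequence $(N'_j)$ along which all projections converge weakly; by consistency of the marginals these limits assemble into a process $X = (X_i)_{i \ge 1}$ with ranked version $Y = (Y_k)_{k \ge 1}$ and retained driving motions $(W_i)$, realized on a common space via a Skorokhod representation. It remains to check that $X$ is an infinite classical system starting from $x$. The initial condition is immediate from $X^{(N)}_i(0) = x_i$. Local finiteness, and hence rankability at every time, follows by passing the uniform level-crossing bound to the limit through Fatou and the portmanteau theorem: $\SL_i \MP(\min_{[0,T]}X_i \le u) < \infty$, so only finitely many particles ever dip below any level on $[0,T]$. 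Finally one must verify the rank-based SDE for each $X_i$ by passing to the limit in the martingale characterization of the finite systems.

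The main obstacle is precisely this last identification step. The coefficients of~\eqref{mainSDE} are discontinuous functions of the configuration along the collision set $\{y_i = y_j\}$, so the indicators $1(\mP^{(N)}_s(k)=i)$ appearing in the drift and quadratic-variation integrals are not continuous functionals of the path, and their weak limits need not a priori be the corresponding indicators for $X$. The remedy I expect to carry out is to show that the set of times at which collisions occur has zero Lebesgue measure almost surely, so that the rank of each particle is determined continuously for a.e.\ $s$; then the occupation integrals converge and the limiting drift and bracket of $X_i$ are exactly $\SL_k 1(\mP_s(k)=i)g_k$ and $\SL_k 1(\mP_s(k)=i)\si_k^2$. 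Lemma~\ref{notie}, which rules out ties at any fixed time, is the key input: combined with Fubini it gives the null collision set and closes the identification.
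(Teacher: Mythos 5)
Your proposal is correct in outline and shares the paper's overall skeleton (uniform fourth-moment tightness for the named particles, a diagonal argument plus Skorokhod representation, and identification of the limiting SDE via an almost-surely null set of tie times), but you handle the ranked components genuinely differently. The paper does not prove tightness of $(Y^{(N)}_1,\ldots,Y^{(N)}_M)$ from scratch: it invokes the monotone coupling underlying Theorem~\ref{thm2}, by which $Y^{(N)}_k(t)$ is nonincreasing in $N$ and converges a.s.\ uniformly on compacts, so the ranked paths converge outright rather than merely being tight. Your alternative --- confining the bottom $M$ ranks to the first $M'$ named particles off an event of probability $2\eps$ via the level-crossing bounds, and then pushing tightness of the named paths through the $1$-Lipschitz sorting map --- is more self-contained and has the added virtue of directly exhibiting the limiting ranked paths as the order statistics of the limiting named paths, which is what Definition~\ref{defapprox} requires of an approximative version; what it loses is the almost-sure monotone convergence that the paper gets for free and reuses elsewhere. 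Two points in your identification step need repair or expansion, though neither invalidates the approach. First, Lemma~\ref{notie} concerns \emph{finite} systems only; your Fubini argument needs absence of ties at each fixed time for the \emph{limit infinite} system, which is Lemma~\ref{notieinf} --- its hypothesis of local finiteness is exactly what you established, so this is a citation fix rather than a mathematical obstruction. Second, the phrase ``the occupation integrals converge and the limiting drift and bracket are\ldots'' elides the real technical point: the stochastic integrals $\int_0^t \rho_{N'_m,i}\,\md W_{N'_m,i}$ have both integrand and integrator varying with $m$, and their $L^2$-convergence to $\int_0^t \rho_i\,\md W_i$ is precisely what the paper's Lemma~\ref{lemma:conv-of-ito-integrals} supplies (It\^o isometry plus a Burkholder--Davis--Gundy bound on $\langle W_{N'_m,i}-W_i\rangle$); if you instead identify the limit only through its drift and bracket, you must additionally check that the cross-brackets vanish and invoke L\'evy's characterization to produce the i.i.d.\ driving Brownian motions demanded by Definition~\ref{defninfclassical}.
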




This infinite classical system has the following properties. 

\begin{lemma} Consider any  infinite classical system $X = (X_i)_{i \ge 1}$, of competing Brownian particles with parameters $(g_n)_{n \ge 1}$, $(\si_n^2)_{n \ge 1}$, satisfying the condition~\eqref{eq:bdd-parameters}. Assume the initial condition $X(0) = x$ satisfies~\eqref{seriesalpha}. Then this system is locally finite. Also, the following set is the state space for $X = (X(t), t \ge 0)$:
$$
\mathcal V := \bigl\{x = (x_i)_{i \ge 1} \in \BR^{\infty}\mid \lim\limits_{i \to \infty}x_i = \infty\ \ \mbox{and}\ \ \SL_{i=1}^{\infty}e^{-\al x_i^2} < \infty\ \ \mbox{for all}\ \ \al > 0\bigr\}.
$$.
\label{names2ranks}
\end{lemma}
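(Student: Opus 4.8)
The plan is to control each named particle separately through its semimartingale decomposition and then sum the resulting sub-Gaussian tail bounds. Fix $T>0$. From the SDE in Definition~\ref{defninfclassical}, at each time $s$ exactly one rank $k_i(s)=\mP_s^{-1}(i)$ is occupied by particle $i$, so I may write $X_i(t)=x_i+A_i(t)+M_i(t)$, where $A_i(t)=\int_0^tg_{k_i(s)}\,\md s$ and $M_i(t)=\int_0^t\si_{k_i(s)}\,\md W_i(s)$. By~\eqref{eq:bdd-parameters} we have $\sup_{[0,T]}|A_i(t)|\le\ol g\,T$ and $\langle M_i\rangle_T=\int_0^T\si_{k_i(s)}^2\,\md s\le\ol\si^2T=:v$, so $M_i$ is a genuine $L^2$ martingale on $[0,T]$. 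Writing $S_i:=\sup_{[0,T]}|M_i(t)|$, a Dambis--Dubins--Schwarz time change represents $M_i$ as a Brownian motion run for time at most $v$, and the reflection principle gives the sub-Gaussian bound $\MP(S_i\ge a)\le C\,e^{-a^2/(2v)}$ for all $a>0$, with a universal constant $C$.

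For local finiteness I would use the pathwise lower bound $\min_{[0,T]}X_i(t)\ge x_i-\ol g\,T-S_i$. Putting $b_i:=x_i-\ol g\,T$, this yields $\MP(\min_{[0,T]}X_i\le u)\le\MP(S_i\ge b_i-u)\le C\,e^{-(b_i-u)^2/(2v)}$ for every $i$ large enough that $b_i>u$. Since $x_i\to\infty$, we have $(b_i-u)^2\ge\tfrac12x_i^2$ for all large $i$, so the right-hand side is at most $C\,e^{-x_i^2/(4v)}$, and~\eqref{seriesalpha} (with $\al=1/(4v)$) makes the series $\sum_i\MP(\min_{[0,T]}X_i\le u)$ converge. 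The first Borel--Cantelli lemma then shows that a.s.\ only finitely many $i$ satisfy $\min_{[0,T]}X_i\le u$, which is exactly local finiteness; intersecting over rational $u$ and integer $T$ gives the statement for all $u,T$ at once.

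To identify $\mathcal V$ as the state space I must verify both defining properties for $X(t)$, a.s.\ and for all $t\ge0$. The condition $\lim_{i\to\infty}X_i(t)=\infty$ is immediate from local finiteness: for each $u$ all but finitely many particles stay above $u$ on $[0,T]$, so $\liminf_iX_i(t)\ge u$ for every $u$. For the summability $\sum_ie^{-\al X_i(t)^2}<\infty$ I would instead bound the larger quantity $\sum_i\sup_{[0,T]}e^{-\al X_i(t)^2}$ in expectation. On the event $\{S_i<b_i\}$ one has $X_i(t)\ge b_i-S_i>0$ throughout $[0,T]$, hence $\sup_{[0,T]}e^{-\al X_i^2}\le e^{-\al(b_i-S_i)^2}$, while on the complement I bound it by $1$. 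Taking expectations, $\ME\big[e^{-\al(b_i-S_i)^2}\mathbf 1(S_i<b_i)\big]+\MP(S_i\ge b_i)$ is a routine Gaussian integral that decays like $e^{-c\,b_i^2}$ with $c=c(\al,v)>0$, hence like $e^{-c'x_i^2}$. Summing and invoking~\eqref{seriesalpha} gives $\ME\sum_i\sup_{[0,T]}e^{-\al X_i^2}<\infty$, so the sum is a.s.\ finite, which controls $\sum_ie^{-\al X_i(t)^2}$ for all $t\in[0,T]$ simultaneously. Taking a countable sequence $\al_n\downarrow0$ and $T\in\mathbb N$, and noting that convergence for small $\al$ forces convergence for all larger $\al$, yields $X(t)\in\mathcal V$ for all $t\ge0$, a.s.

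The main obstacle is this second part of the state-space claim: converting the pathwise lower bound on $X_i$ into a summable estimate for $\ME[\sup_{[0,T]}e^{-\al X_i(t)^2}]$ requires combining the sub-Gaussian tail of the martingale maximum with a Gaussian convolution estimate, and some care is needed because squaring destroys monotonicity unless one first restricts to the event $\{S_i<b_i\}$ on which the particle stays positive. The quantifier bookkeeping — obtaining both properties simultaneously for every $\al>0$ and every $t\ge0$ rather than for fixed values — is then handled by the usual countable-intersection argument, using that~\eqref{seriesalpha} is assumed for all $\al>0$.
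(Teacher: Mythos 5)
Your proposal is correct, and its first half (local finiteness) is essentially the paper's own argument: the paper likewise writes $X_i$ as an It\^o process whose drift is bounded by $\ol g$ and whose diffusion coefficient is bounded by $\ol\si$, applies a time-change/reflection maximal estimate (its Lemma~\ref{lemma:estimate-special}, which yields the tail bound $2\Psi\bigl((x_i-\ol gT-u)/(\ol\si\sqrt T)\bigr)$ — the same sub-Gaussian bound you derive via Dambis--Dubins--Schwarz), sums the tails using~\eqref{seriesalpha} (its Lemma~\ref{lemma:special-series} plays the role of your inline inequality $(b_i-u)^2\ge x_i^2/2$ for large $i$), and finishes with Borel--Cantelli. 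Where you genuinely diverge is the second half, the a.s. summability of $\SL_i e^{-\al X_i(t)^2}$. The paper stays pathwise: it applies the same maximal estimate a second time, now to the events $\{\min_{[0,t]}X_i\le x_i/2\}$, concludes by Borel--Cantelli that a.s. all but finitely many particles satisfy $X_i \ge x_i/2$ on $[0,T]$, and then bounds $\SL_{i\ge i_2}e^{-\al X_i(t)^2}\le\SL_{i\ge i_2}e^{-\al x_i^2/4}<\infty$ deterministically from~\eqref{seriesalpha}; no integration is needed, and the bound automatically holds for all $t\in[0,T]$ at once. You instead pass to expectations, bounding $\ME\SL_i\sup_{[0,T]}e^{-\al X_i^2}$ by splitting on $\{S_i<b_i\}$ and estimating a Gaussian-type integral. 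This also works and also covers all $t\in[0,T]$ simultaneously, but it rests on the convolution estimate $\ME\bigl[e^{-\al(b_i-S_i)^2}\mathbf 1(S_i<b_i)\bigr]\le e^{-\al b_i^2/4}+Ce^{-b_i^2/(8v)}$, which you assert rather than prove (it does follow by splitting the integral at $S_i=b_i/2$, using the sub-Gaussian tail of $S_i$ on the upper piece). In short: the paper's $x_i/2$ trick buys a shorter, purely pathwise argument reusing the one estimate already in hand, while your expectation argument is slightly heavier but yields a quantitative $L^1$ bound on the whole sum of suprema, which the pathwise route does not provide.
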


Now, let us describe the dynamics of the ranked particles $Y_k$. Denote by $L_{(k, k+1)}$ the local time process at zero of $Z_k$, $k = 1, 2, \ldots$ For notational convenience, let $L_{(0, 1)}(t) \equiv 0$. For $k = 1, 2, \ldots$ and $t \ge 0$, let 
$$
B_k(t) = \SL_{i=1}^{\infty}\int_0^t1(\mP_s(k) = i)\md W_i(s).
$$

\begin{lemma} Take a version of an infinite classical system of competing Brownian particles with parameters $(g_n)_{n \ge 1}$ and $(\si_n^2)_{n \ge 1}$. Assume this version is locally finite. Then the processes $B_k = (B_k(t), t \ge 0),\ k = 1, 2, \ldots$ are i.i.d. standard Brownian motions. For $t \ge 0$ and $k = 1, 2, \ldots$, we have:
\begin{equation}
\label{dynamics}
Y_k(t) = Y_k(0) + g_kt + \si_kB_k(t) - \frac12L_{(k, k+1)}(t) + \frac12L_{(k-1, k)}(t).
\end{equation}
\label{lemma:local-time}
\end{lemma}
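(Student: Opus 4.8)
The plan is to prove the two assertions separately: first that the processes $B_k$ are i.i.d. standard Brownian motions, and then the ranked decomposition~\eqref{dynamics}.

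For the first assertion I would use the multidimensional L\'evy characterization (Knight's theorem). Each $B_k$ is a continuous $L^2$-martingale: the partial sums $\SL_{i=1}^n\int_0^t 1(\mP_s(k)=i)\,\md W_i(s)$ are continuous martingales, and they converge uniformly on $[0,T]$ in $L^2$ because $\ME\bigl[\bigl(\SL_{m<i\le n}\int_0^t 1(\mP_s(k)=i)\,\md W_i(s)\bigr)^2\bigr] = \ME\int_0^t\SL_{m<i\le n}1(\mP_s(k)=i)\,\md s \to 0$, the key point being that at each time $s$ exactly one particle occupies rank $k$, i.e. $\SL_{i=1}^\infty 1(\mP_s(k)=i)=1$. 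This same identity yields $\langle B_k\rangle_t = \SL_{i=1}^\infty\int_0^t 1(\mP_s(k)=i)\,\md s = t$. For $k\ne l$, bijectivity of the ranking permutation $\mP_s$ forces $1(\mP_s(k)=i)\,1(\mP_s(l)=i)=0$ for every $i$, so $\langle B_k,B_l\rangle_t = 0$. Knight's theorem then gives that $(B_k)_{k\ge 1}$ are independent standard Brownian motions.

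For the decomposition~\eqref{dynamics} the strategy is to localize in space, reduce to a finite subsystem via local finiteness, and quote the finite-system ranked equation~\eqref{symmSDE} with $q^{\pm}=1/2$. Fix $T>0$ and $k\ge 1$. First I would bound $Y_{k+1}$ from above on $[0,T]$ \emph{without} invoking the SDE we are proving: among any $k+1$ named particles at least one has rank $\ge k+1$, so $Y_{k+1}(t)=X_{(k+1)}(t)\le \max(X_1(t),\ldots,X_{k+1}(t))$, which is a.s. bounded on $[0,T]$ by continuity. Choosing an integer $u$ above this bound and applying local finiteness, only finitely many names — call this set $F$ — ever go below $u$ on $[0,T]$. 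Every name occupying a rank $\le k+1$ at some $t\in[0,T]$ satisfies $X_i(t)\le Y_{k+1}(t)<u$, hence lies in $F$; names outside $F$ stay $\ge u$ and thus above $Y_{k+1}$. Consequently, on $[0,T]$ the bottom $k+1$ order statistics of the full system coincide with those of the finite collection $\{X_i:i\in F\}$, the $F$-rank of each bottom particle equals its full-rank, and the collision local times of $Y_j-Y_{j-1}$ agree for $j\le k+1$.

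Now $\{X_i:i\in F\}$ is a finite family of continuous semimartingales, so its order statistics are continuous semimartingales and the Tanaka-based ranked decomposition underlying~\eqref{symmSDE} applies to them. Since the $F$-ranks, the order statistics, and the collision local times all match those of the infinite system for ranks $\le k+1$, and since a particle of full-rank $j$ obeys $\md X_i = g_j\,\md t + \si_j\,\md W_i$ — precisely the integrand defining $B_j$ — I obtain $Y_k(t)=Y_k(0)+g_kt+\si_kB_k(t)+\tfrac12 L_{(k-1,k)}(t)-\tfrac12 L_{(k,k+1)}(t)$ for $t\in[0,T]$, and letting $T\to\infty$ finishes the proof. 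The main obstacle is exactly this localization step: one must verify carefully that passing to the finite sub-collection $F$ alters neither the relevant order statistics, nor the rank assignments, nor the collision local times for ranks up to $k+1$, even though $F$ is not itself a closed competing Brownian system — the coefficients of its uppermost particles are ``wrong''. This is precisely why only the interior ranks $\le k$, whose decomposition involves solely the local times $L_{(k-1,k)}$ and $L_{(k,k+1)}$, can be handled this way.
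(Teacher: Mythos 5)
Your first claim (that the $B_k$ are i.i.d. standard Brownian motions) is proved correctly: the $L^2$-convergence of the partial sums, the identities $\langle B_k\rangle_t=t$ and $\langle B_k,B_l\rangle_t=0$ (coming from $\SL_{i}1(\mP_s(k)=i)=1$ and injectivity of $\mP_s$), and Knight's theorem give exactly what is needed, and the paper leaves this part implicit. Your space localization is also sound: the set $F$ is a.s. finite, and on $[0,T]$ the bottom $k+1$ order statistics of the infinite system, their rank assignments, and the adjacent collision local times coincide with those of the finite family $\{X_i: i\in F\}$.

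The gap is the final step, exactly where you place ``the main obstacle''. Equation~\eqref{symmSDE} is \emph{not} a generic consequence of Tanaka's formula for ranked semimartingales; it is a theorem about finite systems of competing Brownian particles, which the paper cites from \cite[Lemma 1]{5people} and \cite[Section 3]{BFK}. What holds for an arbitrary finite family of continuous semimartingales is the Banner--Ghomrasni formula, in which the differential of the $k$th ranked process carries weights $1/N_k(t)$ at multi-particle ties and local-time terms between \emph{all} pairs of ranks, not only adjacent ones. To pass from that formula to~\eqref{dynamics} you must prove, for your family $F$: that tie times have zero Lebesgue measure; that the local times between non-adjacent $F$-ranks vanish identically; and that the adjacent local times charge only exactly-pairwise collisions (whence the coefficients $\pm\tfrac12$). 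These three facts are the entire nontrivial content of the finite-system result, and the literature establishes them only for genuine competing systems --- which, as you yourself concede, $\{X_i : i\in F\}$ is not: its upper members carry coefficients dictated by their ranks in the full infinite system, and the $F$-ranked processes above level $k+1$ need not coincide with $Y_{k+2}, Y_{k+3},\ldots$ Your closing sentence, that the decomposition of rank $k$ ``involves solely the local times $L_{(k-1,k)}$ and $L_{(k,k+1)}$'', is precisely the assertion requiring proof, so the argument is circular at that point. The paper closes this hole by localizing in time rather than only in space: around each $t\in[0,T]$ there is a random interval on which either $Y_k$ collides with no other particle (so both local times in~\eqref{dynamics} are flat and the increment identity is immediate), or the cluster of particles occupying the position $Y_k(t)$ evolves as a \emph{closed} finite competing Brownian system whose parameters are indexed by the contiguous block of full ranks it occupies, so that~\eqref{symmSDE} applies verbatim; compactness of $[0,T]$ then patches the increments together. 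To salvage your route you would have to reprove \cite[Lemma 1]{5people} for finite families of It\^o processes with bounded drifts and uniformly nondegenerate diffusions (a substantial piece of work your proposal does not supply), or else insert the paper's collision-cluster argument --- at which point the space localization becomes redundant.
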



\begin{lemma} Under conditions of Lemma~\ref{lemma:local-time}, for every $t > 0$ there is a.s. no tie at time $t > 0$. 
\label{lemma:notieclassical}
\end{lemma}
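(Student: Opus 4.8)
The plan is to reduce the statement about all ties to a statement about a single pair of consecutive ranked particles, and then to isolate the bottom of the infinite system as a genuinely finite system, to which the finite no-tie Lemma~\ref{notie} applies. A tie at time $t$ means $X_i(t)=X_j(t)$ for some $i\neq j$, which is equivalent to $Y_k(t)=Y_{k+1}(t)$, i.e. $Z_k(t)=0$, for some $k\ge 1$. By countable subadditivity it therefore suffices to fix $k$ and to show $\MP(Z_k(t)=0)=0$; summing over $k$ then kills the whole union.

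Fix $k$ and $t>0$. The idea is that, with high probability, the evolution of the lowest $k+1$ ranked particles on $[0,t]$ is driven by only finitely many named particles. Using local finiteness (Lemma~\ref{names2ranks}) together with the fact that $Y_{k+1}$ has a.s.\ continuous paths, so that $\sup_{0\le s\le t}Y_{k+1}(s)<\infty$ a.s., consider the event $B_u:=\{\sup_{0\le s\le t}Y_{k+1}(s)<u\}$; then $\MP(B_u)\to 1$ as $u\to\infty$. On $B_u$ the ranks $1,\dots,k+1$ never rise above $u$, so at every time they are occupied by names belonging to the a.s.\ finite set $\mathcal I_u:=\{i:\min_{[0,t]}X_i(s)\le u\}$; moreover any particle that ever contributes a downward push to $Y_{k+1}$ through the local time $L_{(k+1,k+2)}$ in~\eqref{dynamics} must itself be below $u$ at that instant, hence also lies in $\mathcal I_u$. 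Consequently, on $B_u$ the bottom $k+1$ ranked particles of the infinite system agree with the bottom $k+1$ ranked particles of the finite system of competing Brownian particles formed by the names in $\mathcal I_u$, carrying the same rank-dependent coefficients $g_1,\dots$ and $\si_1,\dots$.

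I would then invoke the finite-system result. On $B_u$ the relevant finite subsystem is uniformly elliptic (the diffusion coefficient of $Z_k$ equals $\si_k^2+\si_{k+1}^2>0$) with coefficients drawn from the finitely many values attached to ranks realized below $u$, so Lemma~\ref{notie}---equivalently, the absolute continuity of the time-$t$ marginal of a uniformly elliptic SDE with bounded coefficients---gives $\MP(Z_k(t)=0,\,B_u)=0$. Letting $u\to\infty$ yields $\MP(Z_k(t)=0)=0$, and subadditivity over $k$ finishes the proof.

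The main obstacle is making the localization rigorous: the set $\mathcal I_u$ is random and its cardinality is not bounded, and one must argue carefully that on $B_u$ the dynamics of ranks $1,\dots,k+1$ genuinely decouple from everything happening at heights $\ge u$, the only coupling being through collisions that, on $B_u$, occur strictly above the bottom ranks. Once this decoupling is established one is comparing against a bona fide finite competing system and Lemma~\ref{notie} applies verbatim. Alternatively, one can bypass the explicit finite comparison and argue directly that the one-dimensional marginal of $Z_k(t)$ is absolutely continuous, using that $Z_k$ is a nonnegative continuous semimartingale whose martingale part has the non-degenerate quadratic variation $(\si_k^2+\si_{k+1}^2)t$, so that it cannot place an atom at the reflecting boundary $0$ at the fixed time $t$; here the delicate point is controlling the downward local-time pushes from the neighboring gaps, whose increments near $t$ are a priori of the same order as the Brownian fluctuations.
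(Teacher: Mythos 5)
Your high-level plan (localize the bottom of the infinite system to a finite competing system and invoke Lemma~\ref{notie}) is the same as the paper's, but the localization you propose does not achieve the decoupling it needs, and that decoupling is precisely the hard point. The event $B_u=\{\sup_{0\le s\le t}Y_{k+1}(s)<u\}$ does not sever the interaction between the bottom $k+1$ ranked particles and the rest of the system: by~\eqref{dynamics}, $Y_{k+1}$ is pushed by $L_{(k+1,k+2)}$, the rank-$(k+2)$ particle is pushed by $L_{(k+2,k+3)}$, and so on, so the bottom ranks stay coupled, through chains of collisions, to particles arbitrarily high up; a level that merely bounds $Y_{k+1}$ does not cut this chain. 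Moreover, the trajectories of the names in $\mathcal I_u=\{i:\min_{[0,t]}X_i(s)\le u\}$ do \emph{not} form a finite system of competing Brownian particles: membership in $\mathcal I_u$ only requires dipping below $u$ once, so a name in $\mathcal I_u$ may at other times lie above names outside $\mathcal I_u$, and then its rank within the subsystem differs from its global rank, so the coefficients it actually carries (assigned by global rank) are not those a finite competing system on $\mathcal I_u$ would assign (by within-subsystem rank); the mismatch propagates down to the bottom ranks through collisions. In addition, $\mathcal I_u$ is a random, non-$\CF_0$-measurable set, so even after conditioning on $B_u$ one is not looking at the law of a finite competing system, and Lemma~\ref{notie} cannot be applied ``verbatim.'' Your fallback argument --- that a nonnegative continuous semimartingale with nondegenerate martingale part cannot sit at $0$ at a fixed time --- is not a generic semimartingale fact once singular local-time terms push the process toward $0$; it is exactly the SRBM boundary property of Reiman and Williams that underlies Lemma~\ref{notie}, so this route begs the question for the infinite system.

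The paper's argument (given for the ranked analogue, Lemma~\ref{notieinf}, which together with Lemma~\ref{lemma:local-time} yields the present lemma) uses a different localization: in space \emph{and} in time simultaneously. By local finiteness (Lemma~\ref{names2ranks}, Lemma~\ref{aux}) a tie at time $t$ involves only finitely many particles, so a.s.\ there exist $l$, a rational level $q$, and an integer $M$ with $Y_l(s)<q<Y_{l+1}(s)$ for \emph{all} $s\in[t-1/M,\,t+1/M]$. On such an event the local time $L_{(l,l+1)}$ is constant on that interval, so the bottom $l$ ranked particles satisfy exactly the finite $l$-particle system of equations there and coincide with a bona fide finite system restarted at time $t-1/M$; Lemma~\ref{notie} then gives probability zero to a tie at time $t$ on this event, and the decomposition over $(k,l,q,M)$ is countable, so the whole tie event is null. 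The persistent spatial gap straddling $t$ is the idea missing from your construction: it is what genuinely decouples the bottom of an infinite system from the top.
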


\subsection{Infinite systems with asymmetric collisions} Lemma~\ref{lemma:local-time} provides motivation to introduce infinite systems of competing Brownian particles with {\it asymmetric collisions}, when we have coefficients other than $1/2$ at the local times in~\eqref{dynamics}. We prove an existence theorem for these systems. Unfortunately, we could not prove uniqueness: we just construct a copy of an infinite ranked system using approximation by finite ranked systems. This copy is called the {\it approximative version} of the infinite ranked system. 

\begin{defn} Fix parameters $g_1, g_2, \ldots \in \BR$, $\si_1, \si_2, \ldots > 0$ and $(q^{\pm}_n)_{n \ge 1}$ such that
$$
q^+_{n+1} + q^-_n = 1,\ \ 0 < q^{\pm}_n < 1,\ \ n = 1, 2, \ldots
$$
Take a sequence of i.i.d. standard $(\CF_t)_{t \ge 0}$-Brownian motions $B_1, B_2, \ldots$ Consider an $\BR^{\infty}$-valued process $Y = (Y(t), t \ge 0)$ with continuous adapted components and continuous adapted real-valued processes $L_{(k, k+1)} = (L_{(k, k+1)}(t), t \ge 0),\ k = 1, 2, \ldots$ (for convenience, let $L_{(0, 1)} \equiv 0$), with the following properties:

\medskip

(i) $Y_1(t) \le Y_2(t) \le Y_3(t) \le \ldots$ for $t \ge 0$;

\medskip

(ii) for $k = 1, 2, \ldots,\ \ t \ge 0$, we have:
$$
Y_k(t) = Y_k(0) + g_kt + \si_kB_k(t) + q^+_kL_{(k-1, k)}(t) - q^-_kL_{(k, k+1)}(t);
$$

\medskip

(iii) each process $L_{(k, k+1)}$ is nondecreasing, $L_{(k, k+1)}(0) = 0$ and 
$$
\int_0^{\infty}\left(Y_{k+1}(t) - Y_k(t)\right)\md L_{(k, k+1)}(t) = 0,\ \ k = 1, 2, \ldots
$$
The last equation means that $L_{(k, k+1)}$ can increase only when $Y_k(t) = Y_{k+1}(t)$. 

\medskip

Then the process $Y$ is called an {\it infinite ranked system of competing Brownian particles} with {\it drift coefficients} $(g_k)_{k \ge 1}$, {\it diffusion coefficients} $(\si_k^2)_{k \ge 1}$, and {\it parameters of collisions} $(q^{\pm}_k)_{k \ge 1}$. The process $Y_k = (Y_k(t), t \ge 0)$ is called the {\it $k$th ranked particle}. The $\BR^{\infty}_+$-valued process $Z = (Z(t), t \ge 0),\ Z(t) = (Z_k(t))_{k \ge 1}$, defined by 
$$
Z_k(t) = Y_{k+1}(t) - Y_k(t),\ \ k = 1, 2, \ldots,\ \ t \ge 0,
$$
is called the {\it gap process}. The process $L_{(k, k+1)}$ is called the {\it local time of collision} between $Y_k$ and $Y_{k+1}$. If $Y(0) = y$, then we say that this system $Y$ {\it starts from} $y$. The processes $B_1, B_2, \ldots$ are called {\it driving Brownian motions}. The system $Y = (Y_k)_{k \ge 1}$ is called {\it locally finite} if for all $u \in \BR$ and $T > 0$ there exist only finitely many $k$ such that $\min_{[0, T]}Y_k(t) \le u$.
\label{main} 
\end{defn}

\begin{rmk}We can reformulate Lemma~\ref{lemma:local-time} as follows: take an infinite classical system $X = (X_i)_{i \ge 1}$ of competing Brownian particles with drift coefficients $(g_n)_{n \ge 1}$ and diffusion coefficients $(\si_n^2)_{n \ge 1}$. Rank this system $X$; in other words, switch from named particles $X_1, X_2, \ldots$, to ranked particles $Y_1, Y_2, \ldots$. The resulting system $Y = (Y_k)_{k \ge 1}$ is an infinite ranked system of competing Brownian particles with drift coefficients $(g_n)_{n \ge 1}$, diffusion coefficients $(\si_n^2)_{n \ge 1}$, and parameters of collision 
$\ol{q}^{\pm}_n = 1/2$, for $n \ge 1$. 
\end{rmk}

We construct this infinite system by approximating it with finite systems of competing Brownian particles with the same parameters. 

\begin{defn} Using the notation from Definition~\ref{main}, for every $N \ge 2$, let 
$$
Y^{(N)} = \left(Y^{(N)}_1, \ldots, Y^{(N)}_N\right)'
$$
be the system of $N$ ranked competing Brownian particles with drift coefficients $g_1, \ldots, g_N$, diffusion coefficients $\si_1^2, \ldots, \si_N^2$ and parameters of collision $(q^{\pm}_n)_{1 \le n \le N}$, driven by Brownian motions $B_1, \ldots, B_N$. Suppose there exist limits
$$
\lim\limits_{N \to \infty}Y^{(N)}_k(t) =: Y_k(t),
$$
which are uniform on every $[0, T]$, for every $k = 1, 2, \ldots$ Assume that $Y = (Y_k)_{k \ge 1}$ turns out to be an infinite system of competing Brownian particles with parameters $(g_n)_{n \ge 1}$, $(\si_n^2)_{n \ge 1}$, $(q^{\pm}_n)_{n \ge 1}$. Then we say that $Y$ is an {\it approximative version} of this system. 
\label{defapprox}
\end{defn}

\begin{rmk}
From Theorem~\ref{convnames}, Lemma~\ref{names2ranks}, and Lemma~\ref{lemma:local-time}, we know that if we take an approximative version of an infinite classical system of competing Brownian particles and rank it, we get the approximative version of an infinite ranked system. This allows us to use subsequent results of Sections 3, 4, and 5 for approximative versions of infinite classical systems. In particular, if (under conditions of Theorem~\ref{existenceinfclassical} or Theorem~\ref{thm:Girsanov-existence}) there is a unique in law version of an infinite classical system, then this only version is necessarily the approximative version, and we can apply results of Sections 3, 4, and 5 to this system. 
\end{rmk}

Now comes the main result of this subsection.

\begin{thm} 
\label{thm2} 
Take a sequence of drift coefficients $(g_n)_{n \ge 1}$, a sequence of diffusion coefficients $(\si_n^2)_{n \ge 1}$, and a sequence of parameters of collision $(q^{\pm}_n)_{n \ge 1}$. 
Suppose that the initial conditions $y \in \BR^{\infty}$ are such that 
$y_1 \le y_2 \le \ldots$, and 
$$
\SL_{n=1}^{\infty}e^{-\al y_n^2} < \infty\ \ \mbox{for all}\ \ \al > 0.
$$
Assume that 
\begin{equation}
\label{eq:bounds-g-si}
\inf\limits_{n \ge 1}g_n =: \underline{g} > -\infty,\ \ \sup\limits_{n \ge 1}\si_n^2  =: \ol{\si}^2 < \infty,
\end{equation}
and there exists $n_0 \ge 1$ such that 
\begin{equation}
\label{eq:ge12}
q^+_n \ge \frac12\ \ \mbox{for}\ \ n \ge n_0.
\end{equation}
Take any i.i.d. standard Brownian motions $B_1, B_2, \ldots$ Then there exists the approximative version of the infinite ranked system of competing Brownian particles with parameters 
$$
(g_n)_{n \ge 1},\ (\si_n^2)_{n \ge 1},\ (q^{\pm}_n)_{n \ge 1},
$$
starting from $y$, with driving Brownian motions $B_1, B_2, \ldots$
\end{thm}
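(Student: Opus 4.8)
The plan is to construct $Y$ as a monotone limit of the finite ranked systems $Y^{(N)}$, driven by the common Brownian motions $B_1, B_2, \ldots$ and started from $[y]_N$, and then to verify that this limit satisfies the three defining conditions of Definition~\ref{main}. Concretely, three things must be shown: that for each fixed $k$ the sequence $\bigl(Y^{(N)}_k\bigr)_{N \ge k}$ converges uniformly on every $[0, T]$ to a finite limit $Y_k$; that the associated collision local times converge as well and keep their defining properties; and that one may pass to the limit in the system of equations~(ii).

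First I would establish monotonicity in $N$ via the comparison techniques of \cite{MyOwn2} (extended to the present infinite setting in this section). The point is to compare the $N$-particle system with the bottom $N$ ranked particles of the $(N+1)$-particle system: the two obey identical equations, with the same $(g_n)$, $(\si_n^2)$, $(q^{\pm}_n)$ and the same driving $B_1, \ldots, B_N$, except that in the larger system rank $N$ carries the extra downward push $-q^-_N L_{(N,N+1)}(t)$ coming from the newly added top particle. Heuristically this is the statement that moving the top particle's starting level from $+\infty$ down to the finite value $y_{N+1}$ lowers every rank. The sign condition $q^+_n \ge \tfrac12$ for $n \ge n_0$ in~\eqref{eq:ge12} is exactly what the comparison lemma requires for this boundary comparison (which involves rank $N+1 > n_0$, so the finitely many lower ranks with $q^+_n < \tfrac12$ do not obstruct it). One thus obtains $Y^{(N+1)}_k(t) \le Y^{(N)}_k(t)$ for all $k \le N$ and $t \ge 0$, so that for each fixed $k$ the sequence $Y^{(N)}_k(t)$ is nonincreasing in $N$ and converges pointwise to some $Y_k(t) \in [-\infty, \infty)$.

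The crux, and what I expect to be the main obstacle, is to show that this limit is finite and that the convergence is uniform on $[0, T]$. Here I would use the spreading-out hypothesis $\SL_{n=1}^{\infty} e^{-\al y_n^2} < \infty$ together with the bounds~\eqref{eq:bounds-g-si} and a local-finiteness estimate of the type underlying Lemma~\ref{names2ranks}. Using Gaussian tail bounds and Borel--Cantelli, one shows that almost surely only finitely many of the high-rank particles can influence the bottom $M$ ranks on $[0, T]$, which quantitatively bounds the successive differences: the nonnegative increments satisfy $\SL_{N}\sup_{t \le T}\bigl(Y^{(N)}_k(t) - Y^{(N+1)}_k(t)\bigr) < \infty$ almost surely. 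Thus the total downward displacement accumulated by adding all further particles is a.s. finite, each $Y_k$ is finite and continuous, and the convergence is uniform on $[0, T]$ (equivalently, monotone convergence to a continuous limit on a compact interval, upgraded by Dini's theorem). This step is delicate precisely because the monotone comparison is global in $N$, so the growth condition on $y$ must carry the full weight of controlling the cumulative effect of infinitely many added particles.

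Finally I would check that $Y = (Y_k)_{k \ge 1}$ is an infinite ranked system in the sense of Definition~\ref{main}. The ordering $Y_1(t) \le Y_2(t) \le \ldots$ passes to the limit from each finite system, giving~(i). Uniform convergence of the $Y^{(N)}_k$ forces each collision local time $L^{(N)}_{(k,k+1)}$ to converge uniformly on $[0, T]$ to a nondecreasing limit $L_{(k,k+1)}$ with $L_{(k,k+1)}(0) = 0$; passing to the limit in~(ii) is then immediate, while the support property $\int_0^{\infty}(Y_{k+1}(t) - Y_k(t))\,\md L_{(k,k+1)}(t) = 0$ is preserved under uniform convergence of a gap process together with its local time, by the standard characterization of reflection. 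This exhibits $Y$ as the desired approximative version started from $y$ with driving Brownian motions $B_1, B_2, \ldots$, which is all that is claimed.
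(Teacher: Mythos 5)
Your overall skeleton---approximating by the finite systems $Y^{(N)}$ with common driving Brownian motions, proving monotonicity in $N$, and passing to the limit in the equations and the local times---is the same as the paper's. But there is a genuine gap at exactly the point you flag as the crux, and it is compounded by a misreading of where hypothesis~\eqref{eq:ge12} enters. The monotonicity $Y^{(N+1)}_k \le Y^{(N)}_k$ requires no condition on the $q^{\pm}_n$ at all: adding a top particle always pushes the lower ranks down, whatever the collision parameters (this is \cite[Corollary 3.9]{MyOwn2}, which the paper applies even to systems containing ranks with $q^+_n < \tfrac12$). The real role of $q^+_n \ge \tfrac12$ is different: it lets one compare the asymmetric system with the \emph{symmetric} system $\ol{Y}^{(N)}$ (same drifts, diffusions, initial data and driving motions, but $\ol{q}^{\pm}_n = \tfrac12$), obtaining $Y^{(N)}_k(t) \ge \ol{Y}^{(N)}_k(t)$ by \cite[Corollary 3.12]{MyOwn2}. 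This is the step your proof is missing, and without it your ``crux'' argument does not go through: the Gaussian-tail and Borel--Cantelli estimates behind Lemma~\ref{names2ranks} apply to \emph{named} particles, i.e.\ to It\^o processes with coefficients bounded in terms of $\ol{g}$ and $\ol{\si}$, and an asymmetric ranked system admits no named-particle representation. The paper's detour is designed precisely to bridge this: the symmetric ranked system coincides in law with the ranking of a classical system $X^{(N)}$, so that $\min_{[0,T]} \ol{Y}^{(N)}_1 = \min_{i \le N}\min_{[0,T]} X^{(N)}_i$ is bounded below uniformly in $N$ by Lemma~\ref{lemma:estimate-special}, Lemma~\ref{lemma:special-series} and the hypothesis $\SL_n e^{-\al y_n^2} < \infty$; combined with $Y^{(N)}_k \ge \ol{Y}^{(N)}_k \ge \ol{Y}^{(N)}_1$ this makes the monotone limit finite. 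Your alternative claim, that $\SL_{N}\sup_{[0,T]}\bigl(Y^{(N)}_k - Y^{(N+1)}_k\bigr) < \infty$ a.s., is stronger than what is needed, is asserted without any mechanism of proof, and is not what the paper establishes.

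Two further points. First, your appeal to Dini's theorem is circular: Dini upgrades monotone convergence to uniform convergence only when the limit is already known to be continuous, and continuity of $Y_k$ is exactly what is at issue. The paper instead extracts uniform convergence from the local times: by \cite[Corollary 3.9]{MyOwn2} the increments $L^{(N)}_{(k, k+1)}(t) - L^{(N)}_{(k, k+1)}(s)$ are monotone in $N$, so each difference $L_{(k, k+1)} - L^{(N)}_{(k, k+1)}$ is nonnegative and nondecreasing in $t$, and pointwise convergence then forces uniform convergence on compacts; continuity of $Y_k$ and $L_{(k, k+1)}$ follows rank by rank from the equations, in a bootstrap over $k$ that uses $q^-_k > 0$. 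Second, you never actually treat the case where $q^+_n \ge \tfrac12$ holds only for $n \ge n_0$: since the symmetric comparison is unavailable for the bottom ranks, the paper runs the whole argument for the subsystem of ranks $n > n_0$ (where all $q^+_n \ge \tfrac12$), and then bounds the ranks $1, \ldots, n_0$ from below by writing $Y^{(N)}_1 = Y^{(N)}_{n_0+1} - Z^{(N)}_{n_0} - \cdots - Z^{(N)}_1$ and comparing these gaps with those of the fixed $(n_0+1)$-particle system. Your parenthetical remark that the finitely many low ranks ``do not obstruct'' the comparison glosses over this entire step.
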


\begin{rmk} We have not proved uniqueness for infinite ranked system from Theorem~\ref{thm2}. We can so far only claim uniqueness for infinite {\it classical} systems. Now, suppose we take the infinite ranked system from Theorem~\ref{thm2} with symmetric collisions, when $q^{\pm}_n = 1/2$ for all $n$. Under the additional assumption that  this system must be the result of ranking a classical system, we also get uniqueness. But without this special condition, it is not known whether this ranked system is unique. 
\end{rmk}

Let us now present some additional properties of this newly constructed approximative version of an infinite system of competing Brownian particles. These are analogous to the properties of an infinite classical system of competing Brownian particles, stated in Lemma~\ref{names2ranks} and Lemma~\ref{lemma:notieclassical} above. 

\begin{lemma} An approximative version of an infinite ranked system from Theorem~\ref{thm2} is locally finite. The process $Y = (Y(t), t \ge 0)$ has the state space 
$$
\mathcal W := \bigl\{y = (y_k)_{k \ge 1} \in \BR^{\infty}\mid y_1 \le y_2 \le y_3 \le \ldots,\ \lim\limits_{k \to \infty}y_k = \infty,\  \SL_{k=1}^{\infty}e^{-\al y_k^2} < \infty,\  \mbox{for all}\  \al > 0\bigr\}.
$$
\label{aux}
\end{lemma}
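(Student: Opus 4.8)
The plan is to reduce the whole statement to the classical case of Lemma~\ref{names2ranks} by means of the comparison techniques of \cite{MyOwn2}, and then to run a Borel--Cantelli argument powered by the tail condition $\SL_{k}e^{-\al y_k^2}<\infty$. Since by construction (Definition~\ref{defapprox}) the approximative version $Y$ is a locally uniform limit of the finite ranked systems $Y^{(N)}$, I would first establish a lower bound for each finite system that is uniform in $N$, and then pass to the limit along the approximation subsequence.

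\medskip

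The first step is to build a tractable reference system lying below $Y$. The two obstructions in~\eqref{SDEasymm} are the downward local-time pushes $-q^-_kL_{(k,k+1)}$ and the fact that the drifts need not be bounded above, so that Lemma~\ref{names2ranks} cannot be applied to a symmetric ranked system with the original drifts. I would remove both as follows. Replacing every drift $g_n$ by the constant $\ol g=\inf_n g_n$ only lowers the particles, by the drift-monotonicity comparison of \cite{MyOwn2}; after this replacement the common drift factors out, so that $Y_k(t)\ge \ol g\,t+\check Y_k(t)$, where $\check Y$ is the driftless ranked system with the same diffusions and collision parameters. To handle the collisions I would use $q^+_n\ge 1/2$ for $n\ge n_0$ from~\eqref{eq:ge12}: for the finitely many low ranks where this may fail I pass to the tail subsystem $(\check Y_{n_0},\check Y_{n_0+1},\ldots)$, whose bottom particle receives only an extra \emph{nonnegative} push $q^+_{n_0}L_{(n_0-1,n_0)}$ from below. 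Dropping this push lowers the particles and produces a genuine driftless ranked system all of whose collision parameters satisfy $q^+\ge 1/2$, which the parameter-monotonicity comparison of \cite{MyOwn2} dominates from below by the fully \emph{symmetric} driftless ranked system $\bar Y$ with the same diffusions and initial data $(y_{n_0},y_{n_0+1},\ldots)$.

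\medskip

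The reference $\bar Y$ is exactly the ranking of an infinite classical system with zero drift and diffusion coefficients bounded by $\ol\si^2$ (by the remark following Definition~\ref{main} together with Lemma~\ref{lemma:local-time}). This classical system satisfies the hypotheses of Theorem~\ref{convnames} and Lemma~\ref{names2ranks}: its drifts are bounded (being $0$) and its diffusions are bounded by~\eqref{eq:bounds-g-si}, while $(y_{n_0},y_{n_0+1},\ldots)$ still obeys~\eqref{seriesalpha}. Hence $\bar Y$ is locally finite with state space $\CW$. Chaining the comparisons, for every $k\ge n_0$ and $t\in[0,T]$ I obtain a bound $Y_k(t)\ge -\,|\ol g|\,T+\bar Y_{k-n_0+1}(t)$. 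Thus $\min_{[0,T]}Y_k\le u$ forces $\min_{[0,T]}\bar Y_{k-n_0+1}\le u+|\ol g|T$, which by local finiteness of $\bar Y$ occurs for only finitely many $k$; this is local finiteness of $Y$. The state-space claim follows from the same domination: $\lim_k Y_k(t)=\infty$ because $\lim_m\bar Y_m(t)=\infty$, and $\SL_k e^{-\al Y_k(t)^2}\le \const+\SL_m e^{-\al \bar Y_m(t)^2}<\infty$ since $Y_k\ge \bar Y_{k-n_0+1}$ with both sides eventually large and positive, so that $Y\in\CW$.

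\medskip

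Concretely, the summability powering the Borel--Cantelli step is the classical estimate already used for Lemma~\ref{names2ranks}: each named particle of the driftless classical system underlying $\bar Y$ is a time-changed Brownian motion with clock bounded by $\ol\si^2 T$, so its running minimum over $[0,T]$ falls below $u+|\ol g|T$ with probability at most $2\Psi\big((y_k-u-|\ol g|T)/(\ol\si\sqrt T)\big)$, and $\SL_k e^{-\al y_k^2}<\infty$ makes these probabilities summable. The main obstacle is precisely the control of the local times: everything hinges on arranging, through the choice of reference system, that the only local-time interactions retained are \emph{upward} pushes, which requires both the constant-drift reduction and---because~\eqref{eq:ge12} holds only eventually---the passage to the tail subsystem above rank $n_0$. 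A secondary technical point is that the comparison inequalities of \cite{MyOwn2} are formulated for finite systems, so each must first be applied to the approximants $Y^{(N)}$ and $\bar Y^{(N)}$ and then transferred to the limit via the locally uniform convergence built into Definition~\ref{defapprox}.
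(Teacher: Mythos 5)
Your proof is correct, and its skeleton is the same as the paper's: the paper also dominates $Y$ from below by the symmetric-collision system via Corollary~\ref{comparinfcordrifts} (iii), handles the low ranks where $q^+_n \ge 1/2$ may fail by passing to the tail subsystem above rank $n_0$ via Corollary~\ref{cor:comp2}, and then quotes the classical-system result, Lemma~\ref{names2ranks}. The genuine difference is your preliminary drift-flattening step. The paper compares $Y$ directly with the symmetric system carrying the \emph{original} drifts $(g_k)_{k \ge 1}$ and applies Lemma~\ref{names2ranks} to it; but Theorem~\ref{thm2} assumes only $\inf_n g_n > -\infty$ (condition~\eqref{eq:bounds-g-si}), while Theorem~\ref{convnames} and Lemma~\ref{names2ranks} require the two-sided bound $\sup_n |g_n| < \infty$ (condition~\eqref{eq:bdd-parameters}). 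So when the drifts are unbounded above, the paper's reference system is not covered by the lemma it invokes, whereas your replacement of every $g_n$ by the constant $\inf_n g_n$, followed by factoring the common drift out, produces a driftless reference system to which Theorem~\ref{convnames} and Lemma~\ref{names2ranks} apply verbatim. In other words, your extra step is not cosmetic: it repairs (or at least makes explicit) a point the paper's two-step proof glosses over, at the modest cost of one more comparison (drift monotonicity, Corollary~\ref{comparinfcordrifts} (i)) and a harmless shift by $|\inf_n g_n|\,T$ in the final estimates. Your closing remarks --- that the comparison inequalities should be applied to the finite approximants and passed to the limit built into Definition~\ref{defapprox}, and that the summability $\SL_k e^{-\al y_k^2} < \infty$ drives the tail bound via $\Psi$ --- match the paper's treatment, up to the routine index shift $k \mapsto k - n_0 + 1$ you already account for.
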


\begin{lemma} Consider an infinite system from Definition~\ref{main}, which is locally finite. Then for every $t > 0$ a.s. the vector $Y(t) = (Y_k(t))_{k \ge 1}$ has no ties. 
\label{notieinf}
\end{lemma}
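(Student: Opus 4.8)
The plan is to reduce the infinite statement to the finite one already available in Lemma~\ref{notie}, using the local finiteness of the system and the approximation structure. Fix $t > 0$. A \emph{tie} at time $t$ means $Y_k(t) = Y_{k+1}(t)$ for some $k \ge 1$, i.e. $Z_k(t) = 0$ for some $k$. So I want to show $\MP(Z_k(t) = 0 \text{ for some } k \ge 1) = 0$, and by countable subadditivity it suffices to prove $\MP(Z_k(t) = 0) = 0$ for each fixed $k$. The difficulty is that the dynamics of the $k$th ranked particle in the infinite system involve the local times $L_{(k-1,k)}$ and $L_{(k,k+1)}$, which themselves depend on the neighboring gaps, so I cannot treat one gap in isolation — but for a fixed finite index $k$ only finitely many neighboring particles matter at a fixed time.

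The key idea is to localize. Because the system is locally finite (Lemma~\ref{aux}), for any level $u \in \BR$ and any $T > 0$ only finitely many ranked particles ever go below $u$ on $[0,T]$; equivalently, for large $M$ the low-ranked particles $Y_1,\dots,Y_M$ cannot interact with particles of rank $> M$ except on an event involving a particle of rank $>M$ dipping down to the level of $Y_M$. First I would fix $k$ and choose, for a given tolerance, a truncation level $M > k$ and a threshold $u$ such that, with probability close to one, none of $Y_1(s),\dots,Y_k(s)$ ever reaches $Y_{M+1}(s)$ on $[0,t]$; on this high-probability event the bottom $M$ ranked particles of the infinite system evolve exactly as an $M$-particle finite ranked system of competing Brownian particles (the equations \eqref{SDEasymm} close off because $L_{(M,M+1)}$ does not charge times when $Y_{M+1} > Y_M$, and on our event $Y_{M+1}$ stays strictly above). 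Then Lemma~\ref{notie}, applied to this finite system, gives $\MP(Z_k(t)=0 \text{ and } \mathcal{E}) = 0$ where $\mathcal{E}$ is the localization event. Letting $M \to \infty$ (so $\MP(\mathcal{E}) \to 1$) forces $\MP(Z_k(t)=0) = 0$.

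The main obstacle is making the coupling with a genuine finite system rigorous: I must argue that on the localization event the restriction of the infinite system to ranks $1,\dots,M$ literally satisfies Definition~\ref{asymmdefn} (finite asymmetric system) with the correct driving Brownian motions $B_1,\dots,B_M$ and the correct local times, so that the finite-system lemma applies verbatim. This requires checking that the top boundary term $-q_M^- L_{(M,M+1)}$ agrees with the finite-system convention (where $L_{(M,M+1)} \equiv 0$) precisely on the event that $Y_{M+1}$ stays above $Y_M$, which holds because $L_{(M,M+1)}$ increases only when $Y_{M+1} = Y_M$. The cleaner route, which I would prefer, is to observe that the approximative version is by construction a uniform-on-compacts limit of finite systems $Y^{(N)}$ (Definition~\ref{defapprox}), so one can instead transfer the no-tie property through this approximation; however, weak/uniform convergence of $Y^{(N_j)}_k(t)$ to $Y_k(t)$ does not immediately pass the zero-probability event through a limit, so some care with the local time structure (or the localization argument above) is genuinely needed rather than a soft limiting argument.
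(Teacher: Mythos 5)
Your reduction to showing $\MP(Z_k(t)=0)=0$ for each fixed $k$ is fine, but the localization step has a genuine gap. The event you can actually make high-probability --- that none of $Y_1,\dots,Y_k$ ever reaches $Y_{M+1}$ on $[0,t]$ --- does \emph{not} make the bottom $M$ particles evolve as a finite $M$-particle system. For that you need $L_{(M,M+1)}\equiv 0$ on $[0,t]$, i.e.\ $Y_M(s)<Y_{M+1}(s)$ for \emph{all} $s\in[0,t]$: a collision between ranks $M$ and $M+1$ pushes $Y_M$ down, which can push $Y_{M-1}$ down, and so on; the influence travels down the whole chain of local times to rank $k$ regardless of whether $Y_k$ itself ever comes near $Y_{M+1}$. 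Your parenthetical ``on our event $Y_{M+1}$ stays strictly above'' (namely, above $Y_M$) is exactly the property your event does not deliver. And the event that would deliver it --- no collision at all between ranks $M$ and $M+1$ during $[0,t]$ --- does \emph{not} have probability tending to $1$ as $M\to\infty$: neighboring high-ranked particles collide on a fixed time interval with non-vanishing probability (think of a system started from stationary gaps, or from initial gaps shrinking to zero); nothing in Definition~\ref{main} or in local finiteness rules this out. So the claim $\MP(\mathcal{E})\to 1$ fails for the event you actually need, and the argument as written cannot be repaired by choosing $M$ and $u$ more carefully in advance.

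The paper's proof fixes precisely this by localizing in \emph{time} and letting the tie itself produce the decoupling rank. On the tie event, take the maximal tie block $Y_{k-1}(t)<Y_k(t)=\dots=Y_l(t)<Y_{l+1}(t)$; local finiteness guarantees $l<\infty$, and the strict separation $Y_l(t)<Y_{l+1}(t)$ holds \emph{by maximality of the block}, not by an a priori choice. One then picks a rational $q\in(Y_l(t),Y_{l+1}(t))$ and, by continuity, an integer $M$ with $Y_l(s)<q<Y_{l+1}(s)$ for $s\in[t-1/M,\,t+1/M]$. On this event $L_{(l,l+1)}$ is flat on the window, so the bottom $l$ particles form a genuine finite ranked system there, and Lemma~\ref{notie}, applied at time $1/M$ after the start of the window, gives probability zero; a countable union over $k<l$, $q\in\mathbb{Q}$, $M\ge 1$ finishes the proof. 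The point you are missing is that both the decoupling rank and the decoupling time window must come from the outcome $\omega$ (and are then made countable via $l$, $q$, $M$); trying to fix them in advance, as you do, forces you to control collisions among arbitrarily high ranks over a fixed time interval, which is false in general. Your alternative ``cleaner route'' via the approximative construction has the additional defect that the lemma is asserted for \emph{any} locally finite version from Definition~\ref{main}, not only approximative ones, so an approximation-based argument would prove a weaker statement even if the passage of null events through the limit could be justified.
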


\subsection{Comparison techniques for infinite systems} We developed comparison techniques for finite systems of competing Brownian particles in \cite{MyOwn2}. These techniques also work for approximative versions of infinite ranked systems. By taking limits as the number $N$ of particles goes to infinity, we can formulate the same comparison results for these two infinite systems. Let us give a few examples. The proofs trivially follow from the corresponding results for finite systems from \cite[Section 3]{MyOwn2}. These techniques are used later in Section 4 of this article, as well as in proofs of statements from Section 3. 

\begin{cor} Take two approximative versions $Y$ and $\ol{Y}$ of an infinite system of competing Brownian particles with the same parameters  
$$
(g_n)_{n \ge 1},\ (\si_n^2)_{n \ge 1},\ (q^{\pm}_n)_{n \ge 1},
$$
with the same driving Brownian motions, but starting from different initial conditions $Y(0)$ and $\ol{Y}(0)$. Let $Z$ and $\ol{Z}$ be the corresponding gap processes, and let $L$ and $\ol{L}$ be the corresponding vectors of local time terms. Then the following inequalities hold a.s.:
\label{comparinfcor}

\smallskip

(i) If $Y(0) \le \ol{Y}(0)$, then $Y(t) \le \ol{Y}(t)$, $t \ge 0$. 

\smallskip

(ii) If $Z(0) \le \ol{Z}(0)$, then $Z(t) \le \ol{Z}(t)$, $t \ge 0$, and $L(t) - L(s) \ge \ol{L}(t) - \ol{L}(s)$, $0 \le s \le t$. 
\end{cor}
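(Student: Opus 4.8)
The plan is to obtain Corollary~\ref{comparinfcor} as a limiting consequence of the corresponding finite-system comparison results from \cite[Section 3]{MyOwn2}, exploiting the fact that both $Y$ and $\ol{Y}$ are, by hypothesis, \emph{approximative} versions, i.e.\ locally uniform limits of finite ranked systems $Y^{(N)}$ and $\ol{Y}^{(N)}$ sharing the same driving Brownian motions $B_1, B_2, \ldots$ The key observation is that the finite system $Y^{(N)}$ driven by $B_1, \ldots, B_N$ is precisely the $N$-particle system whose comparison theory is already established, so the inequalities we want hold at each finite level $N$ and then pass to the limit. I assume throughout that we may take the two approximating families to be built on a common probability space with common driving noise, which is exactly what Definition~\ref{defapprox} provides.

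First I would invoke the finite comparison result: for each fixed $N$, since $Y^{(N)}$ and $\ol{Y}^{(N)}$ have identical parameters $(g_n)_{1 \le n \le N}$, $(\si_n^2)_{1 \le n \le N}$, $(q^{\pm}_n)_{1 \le n \le N}$ and identical driving Brownian motions $B_1, \ldots, B_N$, the a.s.\ monotonicity statements from \cite[Section 3]{MyOwn2} apply verbatim. Concretely, in case~(i), the finite-system ordering $Y^{(N)}(0) \le \ol{Y}^{(N)}(0)$ (which follows from $Y(0) \le \ol{Y}(0)$ after truncating to the first $N$ coordinates, using $Y^{(N)}(0) = [Y(0)]_N$) yields $Y^{(N)}(t) \le \ol{Y}^{(N)}(t)$ for all $t \ge 0$ coordinatewise. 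For case~(ii), the analogous finite result gives both $Z^{(N)}(t) \le \ol{Z}^{(N)}(t)$ and the reverse monotonicity of local-time increments $L^{(N)}(t) - L^{(N)}(s) \ge \ol{L}^{(N)}(t) - \ol{L}^{(N)}(s)$.

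Second I would take $N \to \infty$ along the approximation sequence $(N_j)$. By the defining property of an approximative version (Definition~\ref{defapprox}), for each fixed $k$ we have $Y^{(N_j)}_k(t) \to Y_k(t)$ and $\ol{Y}^{(N_j)}_k(t) \to \ol{Y}_k(t)$ uniformly on compact time intervals, a.s. Since a coordinatewise inequality between continuous processes is preserved under pointwise (indeed locally uniform) limits, passing to the limit in $Y^{(N_j)}_k(t) \le \ol{Y}^{(N_j)}_k(t)$ gives $Y_k(t) \le \ol{Y}_k(t)$ for every $k$ and $t$, which is~(i). The gap inequality in~(ii) follows identically, since $Z_k = Y_{k+1} - Y_k$ is a continuous functional of the ranked coordinates and $Z^{(N_j)}_k \to Z_k$ locally uniformly. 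For the local-time inequality, I would use that the local-time terms are recovered as the limits $L^{(N_j)}_{(k,k+1)} \to L_{(k,k+1)}$; the increments $L(t) - L(s)$ and $\ol{L}(t) - \ol{L}(s)$ are then limits of the finite increments, and the inequality $L^{(N_j)}(t) - L^{(N_j)}(s) \ge \ol{L}^{(N_j)}(t) - \ol{L}^{(N_j)}(s)$ survives the limit.

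The main obstacle is the convergence of the \emph{local-time} processes in~(ii). The weak convergence in Definition~\ref{defapprox} is stated for the particle positions $Y^{(N_j)}_k$, not directly for the local times $L^{(N_j)}_{(k,k+1)}$, and local times are notoriously discontinuous functionals of the path, so one cannot simply apply the continuous mapping theorem. I would handle this by recovering each local time from the Skorokhod-type decomposition~(ii) of Definition~\ref{main}: rearranging the defining SDE expresses $q^+_k L_{(k-1,k)}(t) - q^-_k L_{(k,k+1)}(t)$ as $Y_k(t) - Y_k(0) - g_k t - \si_k B_k(t)$, a locally uniform limit of the corresponding finite expression, and then solving the resulting triangular recursion in $k$ for the individual $L_{(k,k+1)}$ (using that each $L_{(k,k+1)}$ is nondecreasing, starts at zero, and increases only on the contact set). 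Since the right-hand side converges locally uniformly and the map extracting a nondecreasing function from such a decomposition is continuous in the uniform topology, the local-time increments converge and the inequality passes to the limit. This reduction to the finite comparison theory, together with the stability of these inequalities under locally uniform limits, completes the argument.
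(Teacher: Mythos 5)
Your proposal is correct and follows essentially the same route as the paper, which simply observes that the comparison results ``trivially follow from the corresponding results for finite systems'' of \cite[Section 3]{MyOwn2} by passing to the limit along the approximating finite systems sharing the same driving Brownian motions. Your extra care with the local-time convergence (recovering $L_{(k,k+1)}$ recursively from the Skorokhod-type decomposition, using $q^-_k > 0$) is exactly the mechanism the paper itself uses in Lemma~\ref{lemma:conv-of-local-times}, so nothing is missing---note only that Definition~\ref{defapprox} gives a.s.\ locally uniform convergence rather than weak convergence, which makes this step even more direct than you suggest.
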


\begin{cor} Fix $M \ge 2$. Take two approximative versions $Y = (Y_n)_{n \ge M}$ and $\ol{Y} = (\ol{Y}_n)_{n \ge 1}$ of an infinite system of competing Brownian particles with parameters 
$$
(g_n)_{n \ge M},\ (\si_n^2)_{n \ge M},\ (q^{\pm}_n)_{n \ge M};
$$
$$
(g_n)_{n \ge 1},\ (\si_n^2)_{n \ge 1},\ (q^{\pm}_n)_{n \ge 1}.
$$
Assume that $Y_k(0) = \ol{Y}_k(0)$ for $k \ge M$. If $B_1, B_2, \ldots$ are driving Brownian motions for $\ol{Y}$, then let $B_M, B_{M+1}, \ldots$ be the driving Brownian motions for $Y$. Let $Z = (Z_k)_{k \ge M}$ and $\ol{Z} = (\ol{Z}_k)_{k \ge 1}$ be the corresponding gap processes, and let $L = (L_{(k, k+1)})_{k \ge M}$ and $\ol{L} = (\ol{L}_{(k, k+1)})_{k \ge 1}$ be the vectors of boundary terms. Then a.s. the following inequalities hold:
$$
Y_k(t) \le \ol{Y}_k(t),\ k \ge M,\ t \ge 0;
$$
$$
L_{(k, k+1)}(t) - L_{(k, k+1)}(s) \le \ol{L}_{(k, k+1)}(t) - \ol{L}_{(k, k+1)}(s),\ \ 0 \le s \le t,\ \ k \ge M;
$$
$$
Z_k(t) \ge \ol{Z}_k(t),\ \ t \ge 0,\ \ k \ge M. 
$$
\label{cor:comp2}
\end{cor}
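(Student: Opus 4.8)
The plan is to derive the three inequalities at the level of the finite approximating systems, where they are instances of the comparison results of \cite[Section 3]{MyOwn2}, and then to pass to the limit as the number of particles tends to infinity. Recall from Definition~\ref{defapprox} that the approximative versions $Y$ and $\ol{Y}$ arise as limits, uniform on every compact time interval, of finite ranked systems
$$
Y^{(N)} = \bigl(Y^{(N)}_n\bigr)_{M \le n \le N} \quad\text{and}\quad \ol{Y}^{(N)} = \bigl(\ol{Y}^{(N)}_n\bigr)_{1 \le n \le N},
$$
driven by the Brownian motions $B_M, \ldots, B_N$ and $B_1, \ldots, B_N$ respectively, and started from the corresponding truncations of the initial data (which agree in the coordinates $k \ge M$ by hypothesis). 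Thus it suffices to prove, for each fixed $N$ and all $M \le k \le N$,
$$
Y^{(N)}_k(t) \le \ol{Y}^{(N)}_k(t), \quad Z^{(N)}_k(t) \ge \ol{Z}^{(N)}_k(t),
$$
and
$$
L^{(N)}_{(k, k+1)}(t) - L^{(N)}_{(k, k+1)}(s) \le \ol{L}^{(N)}_{(k, k+1)}(t) - \ol{L}^{(N)}_{(k, k+1)}(s),
$$
and then to let $N \to \infty$.

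At the finite level these are exactly the comparison inequalities of \cite[Section 3]{MyOwn2}. Indeed, $\ol{Y}^{(N)}$ is a finite system of $N$ particles whose top $N-M+1$ ranked particles are governed by the same drift, diffusion, and collision parameters as the whole system $Y^{(N)}$; the only structural difference between the two is that in $\ol{Y}^{(N)}$ the particle $\ol{Y}^{(N)}_M$ is pushed upward by the nonnegative local-time term $q^+_M\,\ol{L}^{(N)}_{(M-1, M)}$ coming from the particle ranked $M-1$ below it, whereas the bottom particle $Y^{(N)}_M$ of the system $Y^{(N)}$ receives no such push (here $L^{(N)}_{(M-1, M)} \equiv 0$ by convention). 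The presence of this extra upward reflection at the bottom is precisely the configuration handled by the comparison lemma: deleting the lower particles lowers every remaining particle, widens every remaining gap, and slows the growth of every remaining collision local time. Equivalently, one may remove the bottom particle one rank at a time and apply the one-step comparison $M-1$ times. This yields the displayed finite inequalities for every $N$.

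It remains to pass to the limit $N \to \infty$. For each fixed $k \ge M$ we have $Y^{(N)}_k \to Y_k$ and $\ol{Y}^{(N)}_k \to \ol{Y}_k$ uniformly on compact time intervals, so the pointwise assertions $Y_k(t) \le \ol{Y}_k(t)$ and $Z_k(t) = Y_{k+1}(t) - Y_k(t) \ge \ol{Y}_{k+1}(t) - \ol{Y}_k(t) = \ol{Z}_k(t)$ survive the limit. For the local-time increments one argues that the collision local times also converge, $L^{(N)}_{(k, k+1)} \to L_{(k, k+1)}$ and $\ol{L}^{(N)}_{(k, k+1)} \to \ol{L}_{(k, k+1)}$, uniformly on compacts; the increment inequality, being preserved under pointwise limits, then gives the middle assertion.

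I expect the convergence of the local times to be the one genuinely nontrivial ingredient, since semimartingale local times are not continuous functionals of the paths in general. Here, however, each $L^{(N)}_{(k, k+1)}$ is recovered from the converging ranked trajectories through a reflection map: from the defining equation $Y^{(N)}_k(t) = Y^{(N)}_k(0) + g_k t + \si_k B_k(t) + q^+_k L^{(N)}_{(k-1, k)}(t) - q^-_k L^{(N)}_{(k, k+1)}(t)$ the vector of local times is the image of the driving data under the Lipschitz Skorokhod map associated with the reflection matrix $R$ of~\eqref{R} (which is invertible with $R^{-1}\ge 0$), and by local finiteness (Lemma~\ref{aux}) the value of $L^{(N)}_{(k, k+1)}$ on a fixed interval $[0, T]$ is governed, up to an error vanishing as $N \to \infty$, only by finitely many neighboring particles. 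Uniform convergence of those trajectories therefore forces uniform convergence of the local times, which completes the limit argument; everything else is an immediate passage of inequalities to the limit.
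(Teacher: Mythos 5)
Your proposal is correct and follows essentially the same route as the paper: the paper's proof of Corollary~\ref{cor:comp2} is exactly the observation that the three inequalities hold for the finite approximating systems by the comparison results of \cite[Section 3]{MyOwn2} (removing the bottom $M-1$ particles lowers every remaining particle, widens every remaining gap, and slows every remaining local time), and that they survive the limit $N \to \infty$ that defines the approximative versions. The one place where you do extra work is the convergence of the local times, and there your justification is heavier and less precise than what is needed: the Skorokhod map associated with $R$ is indeed Lipschitz for each fixed dimension, but its Lipschitz constant depends on the dimension, and its input is the free driving data rather than the ranked trajectories, so ``uniform convergence of trajectories forces uniform convergence of local times'' does not follow from it directly; the appeal to local finiteness to localize the influence of far-away particles is likewise left vague. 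None of this machinery is required, because in the paper the uniform-on-compacts convergence $L^{(N)}_{(k,k+1)} \to L_{(k,k+1)}$ is already established as part of the construction of an approximative version (Lemma~\ref{lemma:conv-of-local-times}, inside the proof of Theorem~\ref{thm2}), by an elementary recursive argument: since $L_{(0,1)} \equiv 0$ (respectively $L_{(M-1,M)} \equiv 0$) and $q^-_k > 0$, the defining equation can be solved for $L_{(k,k+1)}$ in terms of $Y_k$, $B_k$, and $L_{(k-1,k)}$, proceeding from the bottom rank upward, so uniform convergence of the $Y^{(N)}_k$ immediately yields uniform convergence of the $L^{(N)}_{(k,k+1)}$. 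Citing that lemma, or reproducing its short recursion, closes the only nontrivial step of your limit passage; everything else in your argument matches the paper's.
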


\begin{cor} Take two approximative versions $Y$ and $\ol{Y}$ of an infinite system of competing Brownian particles with parameters 
$$
(g_n)_{n \ge 1},\ (\si_n^2)_{n \ge 1},\ (q^{\pm}_n)_{n \ge 1};
$$
$$
(\ol{g}_n)_{n \ge 1},\ (\si_n^2)_{n \ge 1},\ (\ol{q}^{\pm}_n)_{n \ge 1},
$$
with the same driving Brownian motions, starting from the same initial conditions. Let $Z$ and $\ol{Z}$ be the corresponding gap processes. Then:
\label{comparinfcordrifts}

\smallskip

(i) If $q^{\pm}_n = \ol{q}^{\pm}_n$, but $g_n \le \ol{g}_n$ for $n = 1, 2, \ldots$, then $Y(t) \le \ol{Y}(t)$, $t \ge 0$;

\smallskip

(ii) If $q^{\pm}_n = \ol{q}^{\pm}_n$, but $g_{n+1} - g_n \le \ol{g}_{n+1} - \ol{g}_n$ for $n = 1, 2, \ldots$, then  $Z(t) \le \ol{Z}(t)$, $t \ge 0$;

\smallskip

(iii) If $g_n = \ol{g}_n$, but $q^+_n \le \ol{q}^+_n$ for $n = 1, 2, \ldots$, then $Y(t) \le \ol{Y}(t)$, $t \ge 0$. 
\end{cor}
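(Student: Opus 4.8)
The plan is to obtain each inequality first at the level of the finite $N$-particle truncations, where the comparison results of \cite[Section 3]{MyOwn2} apply verbatim, and then to pass to the limit $N \to \infty$ using the fact that $Y$ and $\ol{Y}$ are approximative versions, i.e.\ locally uniform limits of finite ranked systems in the sense of Definition~\ref{defapprox}. Thus I would first arrange that both approximative versions are built along a common approximation sequence $(N_j)_{j \ge 1}$, with the finite systems $Y^{(N_j)}$ and $\ol{Y}^{(N_j)}$ driven by the same Brownian motions $B_1, B_2, \ldots$ and started from the same (truncated) initial conditions; this is possible because the construction in Theorem~\ref{thm2} takes the driving motions and the initial data as inputs.

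Next I would record the finite-system comparison at each level $N = N_j$. For part~(i), the finite systems $Y^{(N)}$ and $\ol{Y}^{(N)}$ share diffusion and collision parameters and the same driving motions, and differ only in drift with $g_n \le \ol{g}_n$; the drift-monotonicity comparison from \cite[Section 3]{MyOwn2} then yields $Y^{(N)}_k(t) \le \ol{Y}^{(N)}_k(t)$ for all $k \le N$ and $t \ge 0$. For part~(ii), the hypothesis $g_{n+1} - g_n \le \ol{g}_{n+1} - \ol{g}_n$ is exactly the drift-increment condition under which the finite gap processes compare, giving $Z^{(N)}_k(t) \le \ol{Z}^{(N)}_k(t)$. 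For part~(iii), the drifts coincide and the collision parameters satisfy $q^+_n \le \ol{q}^+_n$; together with the constraint $q^+_{n+1} + q^-_n = 1$ this is precisely the monotonicity of the collision structure assumed in the corresponding finite result of \cite[Section 3]{MyOwn2}, which gives $Y^{(N)}_k(t) \le \ol{Y}^{(N)}_k(t)$.

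Finally I would pass to the limit. By Definition~\ref{defapprox}, for each fixed $k$ we have $Y^{(N_j)}_k \to Y_k$ and $\ol{Y}^{(N_j)}_k \to \ol{Y}_k$ uniformly on every $[0, T]$, and consequently the gaps converge: $Z^{(N_j)}_k = Y^{(N_j)}_{k+1} - Y^{(N_j)}_k \to Z_k$, and likewise $\ol{Z}^{(N_j)}_k \to \ol{Z}_k$. Since the relation $\le$ is closed under pointwise limits, each finite inequality survives the passage $j \to \infty$, yielding $Y(t) \le \ol{Y}(t)$ in (i) and (iii), and $Z(t) \le \ol{Z}(t)$ in (ii), for all $t \ge 0$. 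The only point requiring genuine care — and the main, if modest, obstacle — is the bookkeeping in the first paragraph: one must ensure that the two approximative versions can be realized simultaneously along one sequence $(N_j)$ with matched driving data, so that the finite comparison theorem is applicable at every level before limits are taken; once this is arranged, the remainder is a routine limiting argument.
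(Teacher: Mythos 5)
Your proof is correct and is essentially the paper's own argument: the paper establishes this corollary precisely by applying the finite-system comparison results of \cite[Section 3]{MyOwn2} to the truncated systems $Y^{(N)}$, $\ol{Y}^{(N)}$ with matched driving Brownian motions and initial data, and then letting $N \to \infty$ using the uniform convergence built into Definition~\ref{defapprox}. The bookkeeping you single out as the main obstacle is in fact automatic: Definition~\ref{defapprox} (for ranked systems) requires convergence along the full sequence $N \ge 2$, and the finite ranked systems are pathwise unique given the common driving Brownian motions and initial conditions, so both families of truncations are simultaneously defined and comparable at every level $N$.
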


\begin{rmk} Suppose that in each of these three corollaries, we remove the requirement that the two infinite systems have the same driving Brownian motions. Then we get stochastic ordering instead of pathwise ordering. The same applies to Corollary~\ref{comparinfcor} if we switch from a.s. comparison to stochastic comparison in the inequalities $Y(0) \le \ol{Y}(0)$ and $Z(0) \le \ol{Z}(0)$, respectively. 
\label{RemarkA}
\end{rmk}

\section{The Gap Process: Stationary Distributions and Weak Convergence}

In this section, we construct a stationary distribution $\pi$ for the gap process $Z = (Z(t), t \ge 0)$ of such system. Then we prove weak convergence results for $Z(t)$ as $t \to \infty$.  

\subsection{Construction of a stationary distribution}

Consider again an infinite system $Y$ of competing Brownian particles with parameters $(g_n)_{n \ge 1}$, $(\si_n^2)_{n \ge 1}$, $(q^{\pm}_n)_{n \ge 1}$. Let $Z$ be its gap process. Let us recall a definition from the Introduction. 

\begin{defn} Let $\pi$ be a probability measure on $\BR^{\infty}_+$. We say that $\pi$ {\it is a stationary distribution} for the gap process for the system above if there exists a version $Y$ of this system such that for every $t \ge 0$, we have: $Z(t) \sim \pi$. 
\end{defn}

Let us emphasize that in this section, we do not study uniqueness and Markov property. We simply construct a copy of the system with required properties.  


\begin{asmp} Consider, for each $N \ge 2$, the ranked system of $N$ competing Brownian particles with parameters $(g_n)_{1 \le n \le N}, (\si_n^2)_{1 \le n \le N}, (q^{\pm}_n)_{1 \le n \le N}$. There exists a sequence $(N_j)_{j \ge 1}$ such that $N_j \to \infty$ and for every $j \ge 1$, the system of $N = N_j$ particles is such that its gap process has a stationary distribution. Let $\pi^{(N_j)}$ be this stationary distribution on $\BR^{N_j-1}_+$.
\end{asmp}

Define an $(N-1)\times(N-1)$-matrix $R^{(N)}$ and a vector $\mu^{(N)}$ from $\BR^{N-1}$, as in~\eqref{R} and~\eqref{mu}. By Proposition~\ref{basic}, Assumption 1 holds if and only if
$$
[R^{(N_j)}]^{-1}\mu^{(N_j)} < 0.
$$

Let $B_1, B_2, \ldots$ be i.i.d. standard Brownian motions. Let $z^{(N_j)} \sim \pi^{(N_j)}$ be an $\CF_0$-measurable random variable. Consider the system 
$\ol{Y}^{(N_j)}$ of $N_j$ ranked competing Brownian particles with parameters 
$$
(g_n)_{1 \le n \le N_j}, (\si_n^2)_{1 \le n \le N_j}, (q^{\pm}_n)_{1 \le n \le N_j},
$$
starting from 
$$
(0, z^{(N_j)}_1, \ldots, z_1^{(N_j)} + \ldots + z_{N_j-1}^{(N_j)})',
$$
with driving Brownian motions $B_1, \ldots, B_{N_j}$. The following statement, which we state separately as a lemma, is a direct corollary of \cite[Corollary 3.14]{MyOwn2}. 

\begin{lemma} $[\pi^{(N_{j+1})}]_{N_j-1} \preceq \pi^{(N_j)}$. 
\label{Lemma123}
\end{lemma}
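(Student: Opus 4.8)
The plan is to realize $[\pi^{(N_{j+1})}]_{N_j-1}$ as the law of the bottom $N_j-1$ gaps of the $N_{j+1}$-particle system run in stationarity, and then to compare that system, pathwise under shared driving Brownian motions, with a genuine $N_j$-particle system. The engine is the finite-particle comparison \cite[Corollary 3.14]{MyOwn2}: inserting additional particles on top of a finite ranked system can only compress the gaps between the lower-ranked particles. A quick sanity check is provided by the finite Atlas model of Example~\ref{exmatlas}, where $\la_k = 2(N-k)/N$ increases in $N$ for fixed $k$, so the lower gaps do become stochastically smaller as $N$ grows.

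First I would record the ergodicity I need. Assumption~1 together with Proposition~\ref{basic} forces the stability condition $[R^{(N_j)}]^{-1}\mu^{(N_j)} < 0$ for each $N_j$, and Proposition~\ref{basic}(ii) then yields that the $N_j$-particle gap process is ergodic: for every initial distribution of $Y^{(N_j)}(0)$ one has $Z^{(N_j)}(t) \Ra \pi^{(N_j)}$ as $t \to \infty$. This is the only place the uniqueness and convergence of the finite stationary law enter.

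Next I would set up the coupling. Let $\ol{Y}^{(N_{j+1})}$ be the $N_{j+1}$-particle system started from its own stationary gap distribution, driven by $B_1, \ldots, B_{N_{j+1}}$, so that its gap process satisfies $\ol{Z}^{(N_{j+1})}(t) \sim \pi^{(N_{j+1})}$ for every $t \ge 0$; write $\nu := [\pi^{(N_{j+1})}]_{N_j-1}$ for the (time-independent) law of $\bigl(\ol{Z}^{(N_{j+1})}_1(t), \ldots, \ol{Z}^{(N_{j+1})}_{N_j-1}(t)\bigr)$. On the same space run a genuine $N_j$-particle system $Y^{(N_j)}$, driven by $B_1, \ldots, B_{N_j}$, whose initial ranked configuration coincides with the bottom $N_j$ ranked particles of $\ol{Y}^{(N_{j+1})}(0)$, so the bottom $N_j-1$ gaps agree at time $0$. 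Since the $N_{j+1}-N_j$ extra particles sitting above rank $N_j$ only add downward local-time pressure on $\ol{Y}^{(N_{j+1})}_{N_j}$, Corollary~3.14 of \cite{MyOwn2} gives the pathwise domination
$$
\ol{Z}^{(N_{j+1})}_k(t) \le Z^{(N_j)}_k(t),\qquad k = 1, \ldots, N_j-1,\ t \ge 0.
$$

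Finally I would convert this into the stochastic order and let $t \to \infty$. For each fixed $t$ the coordinatewise a.s.\ inequality above implies $\nu \preceq \mathrm{Law}\bigl([Z^{(N_j)}(t)]_{N_j-1}\bigr)$. By the ergodicity from the first step, $\mathrm{Law}\bigl(Z^{(N_j)}(t)\bigr) \Ra \pi^{(N_j)}$; since the order $\preceq$ is preserved under weak convergence (the sets $[y,\infty)$ are closed and increasing), passing to the limit yields $\nu \preceq \pi^{(N_j)}$, which is exactly the claim. The main obstacle is the careful invocation of Corollary~3.14: one must align the driving Brownian motions and the bottom-$N_j$ initial configuration so that the ``extra particles on top'' hypothesis applies in the correct direction and delivers the inequality precisely for the lower gaps $k \le N_j-1$. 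Once that is in place, the reduction of pathwise to stochastic domination and the stability of $\preceq$ under the weak limit are routine.
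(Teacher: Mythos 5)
Your proof is correct in substance, but it takes a genuinely different route from the paper's. The paper disposes of Lemma~\ref{Lemma123} in one line: the statement is presented as a direct corollary of \cite[Corollary 3.14]{MyOwn2}, which is precisely the comparison of stationary gap distributions for two finite systems of different sizes (it is used in the same way again in the proof of Lemma~\ref{lemma:independent-sequence}). What you have done is essentially re-prove that corollary: you couple the $N_{j+1}$-particle system run in stationarity with an $N_j$-particle system sharing the driving Brownian motions and the bottom initial configuration, invoke the pathwise fact that removing the top $N_{j+1}-N_j$ particles can only enlarge the lower gaps, and then combine ergodicity of the $N_j$-system (Proposition~\ref{basic}(ii), available because Assumption 1 forces $[R^{(N_j)}]^{-1}\mu^{(N_j)}<0$) with preservation of $\preceq$ under weak limits. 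That argument is sound, and it has the merit of exposing the mechanism behind the cited result; the cost is that it duplicates work already done in \cite{MyOwn2} and needs the extra ergodicity input, which the paper's one-line proof does not. One correction is needed, though: the result you invoke for the pathwise domination step is not \cite[Corollary 3.14]{MyOwn2}. In the paper's usage that corollary is the stationary-distribution statement itself, so citing it there would be circular; the pathwise comparison for removing top particles is what the paper cites elsewhere as \cite[Corollary 3.9]{MyOwn2} (or Corollary 3.10) together with its accompanying remarks, e.g.\ in Step 3 of the proof of Theorem~\ref{thm2} and in the proof of Theorem~\ref{limitpointthm}. With that citation repaired, your argument stands.
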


Without loss of generality, by changing the probability space we can take $z^{(N_j)} \sim \pi^{(N_j)}$ such that a.s. 
$[z^{(N_{j+1})}]_{N_j-1} \le z^{(N_j)}$, for $j \ge 1$. In other words, for all $j = 1, 2, \ldots$ and $k = 1, \ldots, N_j-1$, we have:
$$
0 \le z^{(N_{j+1})}_k \le z_k^{(N_j)}.
$$
A bounded monotone sequence has a limit:
$$
z_k = \lim\limits_{j \to \infty}z^{(N_j)}_k,\ \ k \ge 1. 
$$
Denote by $\pi$ the distribution of $(z_1, z_2, \ldots)$ on $\BR^{\infty}_+$. Then $\pi$ becomes a prospective stationary distribution for the gap process for the infinite system of competing Brownian particles. Equivalently, we can define $\pi$ as follows: for every $M \ge 1$, let 
$$
[\pi^{(N_j)}]_{M}\ \Ra\ \rho^{(M)},\ \ j \to \infty.
$$
These finite-dimensional distributions $\rho^{(M)}$ are {\it consistent}: 
$$
[\rho^{(M+1)}]_{M} = \rho^{(M)},\ \ M \ge 1.
$$
By Kolmogorov's theorem there exists a unique distribution $\pi$ on $\BR^{\infty}_+$ such that $[\pi]_{M} = \rho^{(M)}$ for all $M \ge 1$. Note that this limiting distribution does not depend on the sequence $(N_j)_{j \ge 1}$, as shown in the next lemma.

\begin{lemma} If there exist two sequences $(N_j)_{j \ge 1}$ and $(\tilde{N}_j)_{j \ge 1}$ which satisfy Assumption 1, and if $\pi$ and $\tilde{\pi}$ are the resulting limiting distributions, then $\pi = \tilde{\pi}$.
\label{lemma:independent-sequence} 
\end{lemma}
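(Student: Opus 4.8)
\emph{Proof plan.} The plan is to reduce the equality of the two infinite-dimensional limits to the equality of all their finite-dimensional marginals, and to establish the latter by a sandwiching argument based on the monotonicity in $N$ furnished by Lemma~\ref{Lemma123}. Write $\rho^{(M)}$ and $\tilde\rho^{(M)}$ for the $M$-dimensional marginals of $\pi$ and $\tilde\pi$; by Kolmogorov's theorem it suffices to prove $\rho^{(M)} = \tilde\rho^{(M)}$ for every $M \ge 1$. Recall that in the construction the variables $z^{(N_j)} \sim \pi^{(N_j)}$ may be coupled so that $z^{(N_{j+1})} \le z^{(N_j)}$ a.s.; hence $[z^{(N_j)}]_M$ decreases a.s. to a limit with law $\rho^{(M)}$, and the events $\{[z^{(N_j)}]_M \ge y\}$ decrease to $\{[z]_M \ge y\}$. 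Continuity from above then gives, for every $y \in \BR^M_+$,
\[
[\pi^{(N_j)}]_M[y, \infty) \longrightarrow \rho^{(M)}[y, \infty), \qquad j \to \infty,
\]
and likewise $[\pi^{(\tilde N_{j})}]_M[y,\infty) \to \tilde\rho^{(M)}[y,\infty)$. Thus the whole argument can be run at the level of survival functions, and no general closure-under-weak-convergence lemma for $\preceq$ is needed.

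Next I would record the monotonicity input in the exact form I use: whenever $N < N'$ are two particle numbers for which the finite stationary distributions exist, $[\pi^{(N')}]_{N-1} \preceq \pi^{(N)}$. This is Lemma~\ref{Lemma123} (a consequence of \cite[Corollary 3.14]{MyOwn2}) for consecutive indices; for a general pair it follows by merging $(N_j)$ and $(\tilde N_j)$ into a single increasing sequence satisfying Assumption 1 and chaining the consecutive inequalities, using that $\preceq$ is transitive and is preserved under the coordinate projection $[\cdot]_M$. The latter holds because the relevant upper set $\{x : x_i \ge y_i,\ i \le M\}$ is itself a full orthant $[w, \infty)$ once the remaining coordinates of $w$ are set to $0$, these measures being supported on the nonnegative orthant. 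Restricting to the first $M \le N-1$ coordinates then gives $[\pi^{(N')}]_M \preceq [\pi^{(N)}]_M$.

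Now fix $M$ and $y \in \BR^M_+$. For each $j$ with $N_j - 1 \ge M$, choose $j'$ so large that $\tilde N_{j'} \ge N_j$; the previous paragraph yields $[\pi^{(\tilde N_{j'})}]_M[y,\infty) \le [\pi^{(N_j)}]_M[y,\infty)$. Letting $j' \to \infty$ and using the survival-function convergence gives $\tilde\rho^{(M)}[y,\infty) \le [\pi^{(N_j)}]_M[y,\infty)$, and then letting $j \to \infty$ gives $\tilde\rho^{(M)}[y,\infty) \le \rho^{(M)}[y,\infty)$. Interchanging the roles of the two sequences yields the reverse inequality, so the two survival functions coincide for every $y \in \BR^M_+$. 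Since a finite measure on $\BR^M_+$ is determined by its survival function (by inclusion--exclusion over boxes), $\rho^{(M)} = \tilde\rho^{(M)}$; as this holds for all $M$, Kolmogorov's uniqueness gives $\pi = \tilde\pi$.

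The main obstacle is not any single estimate but the order-theoretic bookkeeping: one must be sure that the monotonicity of Lemma~\ref{Lemma123} is available for arbitrary (not merely consecutive) pairs $N < N'$, and that the upper-orthant order $\preceq$ behaves correctly under the three operations used here --- coordinate projection, the monotone (survival-function) limit, and the passage from mutual domination to equality. Carrying everything out at the level of survival functions, as above, is precisely what lets me sidestep the subtlety that for $M \ge 2$ the order $\preceq$ is strictly weaker than the existence of a monotone coupling.
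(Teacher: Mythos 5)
Your proposal is correct and takes essentially the same route as the paper's proof: both reduce the claim to finite-dimensional marginals and sandwich the two limits by interleaving the sequences $(N_j)$ and $(\tilde N_j)$, with the monotonicity of finite-system stationary distributions (Lemma~\ref{Lemma123}, i.e.\ \cite[Corollary 3.14]{MyOwn2}) as the key input. The differences are only refinements of bookkeeping: you obtain $[\pi^{(N_j)}]_M[y,\infty) \to [\pi]_M[y,\infty)$ directly from the monotone coupling and continuity from above (where the paper invokes weak convergence plus preservation of $\preceq$ under weak limits), and you derive the comparison for non-consecutive pairs by merging the two sequences and chaining, rather than citing the corollary for an arbitrary pair.
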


The next lemma allows us to rewrite the condition~\eqref{seriesalpha} in terms of the gap process. 

\begin{lemma} For a sequence $y = (y_n)_{n \ge 1} \in \BR^{\infty}$ such that $y_n \le y_{n+1},\ n \ge 1$, let $z = (z_n)_{n \ge 1} \in \BR^{\infty}$ be defined by $z_n = y_{n+1} - y_n,\ n \ge 1$. Then $y$ satisfies~\eqref{seriesalpha} if and only if $z$ satisfies
\begin{equation}
\label{gapnice}
\SL_{n=1}^{\infty}\exp\left(-\al(z_1 + \ldots + z_n)^2\right) < \infty\ \ \mbox{for all}\ \ \al > 0.
\end{equation}  
\label{aux:gaps}
\end{lemma}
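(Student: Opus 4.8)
The plan is to exploit the telescoping identity $z_1 + \ldots + z_n = y_{n+1} - y_1$, which turns the partial sums appearing in~\eqref{gapnice} into the shifted sequence $(y_{n+1} - y_1)_{n \ge 1}$. Setting $c := y_1$, condition~\eqref{gapnice} then reads $\SL_{m=2}^{\infty}\exp\left(-\al(y_m - c)^2\right) < \infty$ for all $\al > 0$, and the task reduces to comparing the two series $\SL_m e^{-\al y_m^2}$ and $\SL_m e^{-\al(y_m - c)^2}$, together with handling the divergence-to-infinity clause of~\eqref{seriesalpha} separately.

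The workhorse is the elementary quadratic inequality $(a + b)^2 \ge \tfrac12 a^2 - b^2$, valid for all real $a, b$ (it follows from $2|a|\,|b| \le \tfrac12 a^2 + 2b^2$). Applying it with $a = y_m$, $b = -c$ gives $(y_m - c)^2 \ge \tfrac12 y_m^2 - c^2$, hence $\exp\left(-\al(y_m-c)^2\right) \le e^{\al c^2}\exp\left(-\tfrac{\al}{2} y_m^2\right)$; applying it with $a = y_m - c$, $b = c$ gives the reverse comparison $\exp\left(-\al y_m^2\right) \le e^{\al c^2}\exp\left(-\tfrac{\al}{2}(y_m - c)^2\right)$. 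Since both~\eqref{seriesalpha} and~\eqref{gapnice} are required to hold for every $\al > 0$, the halving of the coefficient $\al \mapsto \al/2$ is harmless, and each comparison immediately yields one implication between the two series (the finitely many initial terms being irrelevant to convergence).

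What remains is the $\lim_{n \to \infty} y_n = \infty$ clause of~\eqref{seriesalpha}, which is the only part not directly delivered by the series comparison. For the forward direction ($\eqref{seriesalpha} \Ra \eqref{gapnice}$) this clause is assumed, so nothing further is needed. For the converse I would argue by contradiction using monotonicity: since $y_n \le y_{n+1}$, the sequence $(y_n)$ converges in $(-\infty, +\infty]$; if the limit $L$ were finite, then $(y_{n+1} - y_1)^2 \to (L - y_1)^2$, so the general term $\exp\left(-\al(y_{n+1}-y_1)^2\right)$ would tend to the positive constant $\exp\left(-\al(L-y_1)^2\right)$ and the series in~\eqref{gapnice} would diverge, contradicting the hypothesis; hence $y_n \to +\infty$.

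I expect no serious obstacle, as the argument is entirely elementary once the telescoping identity and the quadratic inequality are in place. The only points requiring a little care are the $\al \mapsto \al/2$ bookkeeping and the observation that it is precisely the monotonicity of $(y_n)$ that rules out a finite limit in the converse.
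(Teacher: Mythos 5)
Your proof is correct. The paper itself states Lemma~\ref{aux:gaps} without proof, treating it as elementary, and your argument is exactly the one implicitly intended: the telescoping identity $z_1 + \ldots + z_n = y_{n+1} - y_1$ reduces everything to comparing $\sum_m e^{-\al y_m^2}$ with $\sum_m e^{-\al(y_m - y_1)^2}$, and the quadratic bound $(c+d)^2 \ge \tfrac12 c^2 - d^2$ that you invoke is precisely the inequality the paper uses in its proof of Lemma~\ref{lemma:special-series} in the Appendix. Your handling of the two quantified conditions (absorbing the shift at the cost of $\al \mapsto \al/2$, which is harmless since both \eqref{seriesalpha} and \eqref{gapnice} are quantified over all $\al > 0$) and your use of monotonicity to upgrade ``terms do not tend to zero'' into $y_n \to \infty$ in the converse are both sound, so there is nothing to add.
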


Now, let us state one of the two main results of this section. 

\begin{thm} Consider an infinite system of competing Brownian particles with parameters 
$$
(g_n)_{n \ge 1},\ (\si_n^2)_{n \ge 1},\ (q^{\pm}_n)_{n \ge 1}.
$$

(i) Let the Assumption 1 and~\eqref{eq:bounds-g-si},~\eqref{eq:ge12} be true. Then we can construct the distribution $\pi$. 

(ii) Assume, in addition, that if a $\BR^{\infty}_+$-valued random variable $z$ is distributed according to $\pi$, then $z = (z_1, z_2,\ldots)$ a.s. satisfies~\eqref{gapnice}. Then we can construct an approximative version of the infinite system of competing Brownian particles with parameters 
$$
(g_n)_{n \ge 1},\ (\si_n^2)_{n \ge 1},\ (q^{\pm}_n)_{n \ge 1},
$$
such that $\pi$ is a stationary distribution for the gap process. 
\label{thm3}
\end{thm}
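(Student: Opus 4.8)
The plan is to handle the two parts separately; part (i) is essentially the construction already carried out in the paragraphs preceding the statement, so the substance lies in part (ii). For part (i), I would assemble the pieces already in place. Assumption 1 supplies, for each $j$, a stationary distribution $\pi^{(N_j)}$ on $\BR^{N_j-1}_+$ for the finite $N_j$-particle gap process. Lemma~\ref{Lemma123} gives the nesting $[\pi^{(N_{j+1})}]_{N_j-1} \preceq \pi^{(N_j)}$, which by a Strassen-type coupling lets me realize random vectors $z^{(N_j)} \sim \pi^{(N_j)}$ on one probability space with $0 \le z^{(N_{j+1})}_k \le z^{(N_j)}_k$ a.s. for every admissible $k$. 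Each coordinate is then monotone and bounded below, so $z_k := \lim_j z^{(N_j)}_k$ exists a.s., and I define $\pi$ to be the law of $(z_k)_{k \ge 1}$ on $\BR^{\infty}_+$. Equivalently the marginals $[\pi^{(N_j)}]_M \Ra \rho^{(M)}$ are consistent and Kolmogorov's theorem yields $\pi$ with $[\pi]_M = \rho^{(M)}$; Lemma~\ref{lemma:independent-sequence} shows $\pi$ does not depend on the chosen sequence. This establishes (i).

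For part (ii), the idea is to run each finite system in its own stationary regime and pass to the limit. On the common probability space above, with the coupled $z^{(N_j)}$ and fixed driving Brownian motions $B_1, B_2, \ldots$, let $\ol{Y}^{(N_j)}$ be the $N_j$-particle ranked system started from $(0, z^{(N_j)}_1, \ldots, z^{(N_j)}_1 + \cdots + z^{(N_j)}_{N_j-1})'$. Because $\pi^{(N_j)}$ is stationary for the $N_j$-particle gap process, the gap vector $Z^{(N_j)}(t)$ of $\ol{Y}^{(N_j)}$ has law $\pi^{(N_j)}$ for every $t \ge 0$. The extra hypothesis is exactly what makes the limiting initial data admissible: since $z \sim \pi$ a.s. satisfies~\eqref{gapnice}, Lemma~\ref{aux:gaps} shows the point $y := (0, z_1, z_1+z_2, \ldots)$ a.s. satisfies~\eqref{seriesalpha}, and monotonicity of its coordinates forces $y_k \to \infty$, so $y \in \mathcal{W}$ a.s., the state space of Lemma~\ref{aux}. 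Conditioning on $z$, the bounds~\eqref{eq:bounds-g-si} and~\eqref{eq:ge12} together with $y \in \mathcal{W}$ put us under Theorem~\ref{thm2}; the comparison machinery behind that theorem, fed the monotone coupling $z^{(N_{j+1})}_k \le z^{(N_j)}_k$ (Corollary~\ref{comparinfcor}(ii) and Corollary~\ref{cor:comp2}), yields a.s. uniform-on-compacts convergence $\ol{Y}^{(N_j)}_k(t) \to Y_k(t)$ for every $k$, with $Y$ an approximative version in the sense of Definition~\ref{defapprox}.

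It then remains to transfer stationarity through the limit. Fix $t \ge 0$ and $M \ge 1$. Convergence of ranked particles gives $Z^{(N_j)}_k(t) \to Z_k(t)$ a.s. for $1 \le k \le M$, hence $(Z_1^{(N_j)}(t), \ldots, Z_M^{(N_j)}(t)) \to (Z_1(t), \ldots, Z_M(t))$ in distribution. On the other hand this finite vector has law $[\pi^{(N_j)}]_M$, which converges weakly to $[\pi]_M = \rho^{(M)}$ by the construction of $\pi$. Uniqueness of the limit in distribution gives $(Z_1(t), \ldots, Z_M(t)) \sim [\pi]_M$, and since $M$ is arbitrary, $Z(t) \sim \pi$ for every $t \ge 0$; that is, $\pi$ is a stationary distribution for the gap process of the approximative version $Y$.

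I expect the main obstacle to be the convergence step in the second paragraph. Theorem~\ref{thm2} is stated for a deterministic initial condition, whereas here $y$ is random, so I must either condition on $z$ and check that the resulting construction is jointly measurable, or re-run the comparison and monotone-limit argument of Theorem~\ref{thm2} directly for the coupled stationary systems. The coupling $z^{(N_{j+1})}_k \le z^{(N_j)}_k$ is engineered precisely so that the order-preserving comparison results apply and deliver a genuine monotone limit rather than merely a subsequential weak limit; verifying that this limit is an approximative version with the correct parameters, and that the a.s. convergence of ranked particles coexists with the weak convergence $\pi^{(N_j)} \Ra \pi$ of the finite stationary laws, is the delicate point.
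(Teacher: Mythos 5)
Your architecture for part (ii) is the same as the paper's: run each finite system in its stationary regime with monotonically coupled initial gaps $z^{(N_j)}$ and common driving Brownian motions, pass to an a.s. limit, and transfer stationarity through the a.s. convergence of the gaps combined with $[\pi^{(N_j)}]_M \Ra [\pi]_M$ and uniqueness of weak limits; part (i) is likewise the construction already in the text (Lemma~\ref{Lemma123}, monotone coupling, Kolmogorov extension, Lemma~\ref{lemma:independent-sequence}).

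The genuine gap is precisely the step you flag but do not prove: the identification $\ol{Y}^{(N_j)}_k(t) \to Y_k(t)$ a.s., where $Y$ is the approximative version produced by Theorem~\ref{thm2} from the limiting initial point $y = (0, z_1, z_1+z_2, \ldots)'$. This is the content of the paper's Lemma~\ref{lemma:conv4}, and it does not follow from ``feeding the monotone coupling to Corollary~\ref{comparinfcor}(ii) and Corollary~\ref{cor:comp2}'': those corollaries compare infinite approximative versions with one another, not the finite stationary-started systems $\ol{Y}^{(N_j)}$ with $Y$. What the coupling gives directly is (a) $\ol{Y}^{(N_j)}_k(t)$ is non-increasing in $j$, and (b) the lower bound $\ol{Y}^{(N_j)}_k(t) \ge Y^{(N_j)}_k(t) \ge Y_k(t)$, since $z_k \le z^{(N_j)}_k$. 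Hence a monotone limit $\tilde{Y} \ge Y$ exists, but nothing so far excludes $\tilde{Y}$ being strictly above $Y$; and without the identification $\tilde{Y} = Y$ you cannot even assert that $\tilde{Y}$ is an approximative version, because Definition~\ref{defapprox} requires the approximating finite systems to start from truncations of one fixed initial point, whereas the $\ol{Y}^{(N_j)}$ start from points varying with $j$. The paper closes this with an $\eps$-shift sandwich: fix $j$ and $\eps > 0$; for $l$ large the coupling gives $z_1^{(N_l)} + \cdots + z_k^{(N_l)} \le z_1 + \cdots + z_k + \eps$ for $k \le N_j - 1$; introduce an auxiliary $N_j$-particle system $\check{Y}$ started from the first $N_j$ of the $N_l$-stationary positions, and apply the finite-system comparisons of \cite[Corollaries 3.9 and 3.11]{MyOwn2} twice, using that $Y^{(N_j)} + \eps\mathbf{1}_{N_j}$ is again a system of competing Brownian particles, to get $\ol{Y}^{(N_l)}_k(t) \le \check{Y}_k(t) \le Y^{(N_j)}_k(t) + \eps$; then let $j \to \infty$ and $\eps \downarrow 0$ and combine with the lower bound. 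This upper half of the sandwich, with the auxiliary system $\check{Y}$, is the missing idea your proposal needs to be complete.
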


\begin{rmk} As mentioned in the Introduction, if a stationary distribution for the gap process of {\it finite} systems exists, it is unique. This was proved in \cite{5people}. For infinite systems, this is an open question.  
\end{rmk} 

In this subsection, we apply Theorem~\ref{thm3} to the case of the {\it skew-symmetry condition}, similar to~\eqref{SS}: 
\begin{equation}
\label{SSinf}
(q^-_{k-1} + q^+_{k+1})\si_k^2 = q^-_k\si^2_{k+1}  + q^+_k\si_{k-1}^2,\ \ k = 2, 3, \ldots
\end{equation}
Under this condition, by Proposition~\ref{basic}, 
$$
\pi^{(N_j)} = \bigotimes\limits_{k=1}^{N_j-1}\Exp(\la^{(N_j)}_k),
$$
where we define for $k = 1, \ldots, N_j-1$:
$$
\la_k^{(N_j)} = \frac2{\si_k^2 + \si_{k+1}^2}\left(-[R^{(N_j)}]^{-1}\mu^{(N_j)}\right)_k.
$$
Consider the following marginal of this stationary distribution:
$$
[\pi^{(N_{j+1})}]_{N_j-1} = \bigotimes\limits_{k=1}^{N_j-1}\Exp(\la^{(N_{j+1})}_k).
$$
By Lemma~\ref{Lemma123}, we can compare:
$$
[\pi^{(N_{j+1})}]_{N_j-1} \preceq \pi^{(N_j)} = \bigotimes\limits_{k=1}^{N_j-1}\Exp(\la^{(N_j)}_k).
$$
But $\Exp(\la') \preceq \Exp(\la'')$ is equivalent to $\la' \ge \la''$. Therefore, $\la^{(N_j)}_k \le \la^{(N_{j+1})}_k$, for $k = 1, \ldots, N_j-1$. In other words, for every $k$, the sequence $(\la^{(N_j)}_k)$ is nondecreasing. There exists a limit (possibly infinite) 
$$
\la_k := \lim\limits_{j \to \infty}\la^{(N_j)}_k,\ \ k = 1, 2, \ldots
$$
Assume that $\la_k < \infty$ for all $k = 1, 2, \ldots$ Then 
\begin{equation}
\label{cand}
\pi = \bigotimes\limits_{k=1}^{\infty}\Exp(\la_k).
\end{equation}
If $\la_k = \infty$ for some $k$, then we can also write~\eqref{cand}, understanding that $\Exp(\infty) = \de_{0}$ is the Dirac point mass at zero. This $\pi$ is a candidate for a stationary distribution. If the condition~\eqref{gapnice} is satisfied $\pi$-a.s., then $\pi$ is, indeed, a stationary distribution. Let us give a sufficient condition for~\eqref{gapnice}. 

\begin{lemma} Consider a distribution $\pi$ as in~\eqref{cand}. Let $\La_n := \sum_{k=1}^n\la_k^{-1}$. 

\medskip

(i) If $\sup_{n \ge 1}\la_n  < \infty$, then $\pi$-a.s.~\eqref{gapnice} is satisfied.

%
%

(ii) If $\sum_{n = 1}^{\infty}\la_n^{-2} < \infty$, then $\pi$-a.s.~\eqref{gapnice} is satisfied if and only if  
\begin{equation}
\label{La}
\SL_{n=1}^{\infty}e^{-\al \La_n^2} < \infty\ \ \mbox{for all}\ \ \al > 0.
\end{equation}
\label{satisfaction}
\end{lemma}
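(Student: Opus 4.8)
The statement concerns when the series
$\SL_{n=1}^{\infty}\exp\!\left(-\al(z_1 + \ldots + z_n)^2\right)$ converges almost surely,
where the $z_k$ are independent with $z_k \sim \Exp(\la_k)$. Write $S_n := z_1 + \ldots + z_n$ for the partial sums. Then $\ME S_n = \La_n = \sum_{k=1}^n \la_k^{-1}$ and $\Var S_n = \sum_{k=1}^n \la_k^{-2}$. The plan is to compare the random partial sum $S_n$ with its mean $\La_n$ and to transfer the convergence of the random series to convergence of the deterministic series~\eqref{La}, using a Borel--Cantelli / Kolmogorov three-series type argument to control the fluctuations.

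**Part (i).** Suppose $\sup_{n \ge 1}\la_n =: \la^* < \infty$, so each gap has rate bounded above, i.e.\ the gaps are stochastically bounded below by $\Exp(\la^*)$. The plan is to show $S_n$ grows at least linearly, forcing the exponents $-\al S_n^2$ to $-\infty$ fast enough. First I would bound $S_n$ below by a sum of i.i.d.\ $\Exp(\la^*)$ variables (via a coupling, using that $\la_k \le \la^*$ means $z_k \succeq \Exp(\la^*)$), whose partial sum satisfies a strong law: $S_n / n \to 1/\la^* > 0$ a.s.\ along a lower envelope, and more usefully a large-deviations bound $\MP(S_n \le cn) \le e^{-\de n}$ for suitable $c, \de > 0$. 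Then $\exp(-\al S_n^2) \le \exp(-\al c^2 n^2)$ on the complementary event, and the exceptional events $\{S_n \le cn\}$ are summable by the exponential bound, so Borel--Cantelli gives $S_n > cn$ eventually a.s. Consequently the tail of the series is dominated term-by-term by $\sum_n \exp(-\al c^2 n^2) < \infty$, giving~\eqref{gapnice} a.s.\ for every $\al > 0$.

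**Part (ii).** Now assume $\sum_{n=1}^\infty \la_n^{-2} < \infty$, so $\sigma^2 := \sup_n \Var S_n = \sum_k \la_k^{-2} < \infty$ and the centered sums $S_n - \La_n$ form an $L^2$-bounded martingale. By Kolmogorov's convergence theorem $S_n - \La_n$ converges a.s.\ to a finite random variable $\xi$, so $S_n = \La_n + O(1)$ a.s. The plan is to exploit this: since $S_n$ and $\La_n$ differ by an almost surely bounded (along the whole sequence) random quantity, the two exponents $-\al S_n^2$ and $-\al \La_n^2$ are comparable. Concretely, on the event $\{|S_n - \La_n| \le K \text{ for all } n\}$, which has probability close to $1$ for $K$ large (by maximal-inequality control of the martingale, e.g.\ Doob's $L^2$ inequality bounding $\MP(\sup_n |S_n - \La_n| > K) \le \sigma^2 / K^2$), we have $(\La_n - K)_+^2 \le S_n^2 \le (\La_n + K)^2$, hence
\begin{equation*}
e^{-\al(\La_n + K)^2} \le e^{-\al S_n^2} \le e^{-\al(\La_n - K)_+^2}.
\end{equation*}
Summing and using $(\La_n \pm K)^2 = \La_n^2 \pm 2K\La_n + K^2$ together with the elementary fact that $\La_n \to \infty$ (since $\La_n$ is increasing; if $\La_n$ stayed bounded the series~\eqref{La} would trivially diverge and one checks~\eqref{gapnice} diverges too) shows the random series and the deterministic series~\eqref{La} converge together: the factor $e^{\mp 2\al K \La_n + \al K^2}$ only perturbs the exponent by a lower-order $O(\La_n)$ term, which cannot change convergence of a series whose general term behaves like $e^{-\al \La_n^2}$ for all $\al$ (absorb the linear term by slightly adjusting $\al$). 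Thus on this event~\eqref{gapnice} holds iff~\eqref{La} holds; letting $K \to \infty$ removes the restriction to the event up to null sets.

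**Main obstacle.** The delicate point is the two-way comparison in part (ii): I must ensure that the linear-in-$\La_n$ correction coming from $\pm 2K\La_n$ genuinely does not affect convergence. The clean way is to observe that~\eqref{La} holds \emph{for all} $\al > 0$, so for the upper bound I compare $e^{-\al S_n^2}$ against $e^{-(\al/2)\La_n^2}$ for large $n$ (since $-\al(\La_n - K)^2 \le -(\al/2)\La_n^2$ once $\La_n \ge 4K$), and for the lower bound against $e^{-2\al \La_n^2}$; both directions stay within the quantifier ``for all $\al$,'' so no information is lost. The remaining care is handling the a.s.\ finiteness of $\sup_n|S_n - \La_n|$ versus a uniform bound $K$: the martingale maximal inequality supplies $\MP(\sup_n|S_n-\La_n|>K)\to 0$ as $K\to\infty$, so the equivalence holds on events of probability arbitrarily close to one, and since the validity of~\eqref{gapnice} is a tail ($0$--$1$) event by Kolmogorov's zero--one law, its probability is $0$ or $1$, forcing it to coincide with the deterministic condition~\eqref{La}.
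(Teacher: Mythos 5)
Your proposal is correct and follows essentially the same route as the paper: in (i) the paper couples $z_k$ with rescaled i.i.d.\ $\Exp(\ol{\la})$ variables and invokes the law of large numbers for linear growth of the partial sums (your large-deviations/Borel--Cantelli bound is just a slightly heavier way to get the same growth estimate), and in (ii) the paper uses $\Var z_n \asymp \la_n^{-2}$ and Kolmogorov's convergence theorem to conclude $S_n - \La_n$ is a.s.\ bounded, declaring ``the rest is trivial.'' Your write-up merely fills in that ``trivial'' rest --- the absorption of the $O(\La_n)$ correction via the ``for all $\al$'' quantifier, the case of bounded $\La_n$, and the null-set bookkeeping --- all of which is sound.
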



\subsection{The case of symmetric collisions} 
Assume now that the collisions are symmetric: $q^{\pm}_n = 1/2$, $n = 1, 2, \ldots
$ Then the skew-symmetry condition~\eqref{SSinf} takes the form $\si_{k+1}^2 - \si_k^2 = \si_k^2 - \si_{k-1}^2$, for $k = 2, 3, \ldots$. In other words, $\si_k^2$ must linearly depend on $k$. If, in addition,~\eqref{eq:bounds-g-si} holds, then $\si_k^2 = \si^2,\ k = 1, 2, \ldots$ Recall the definition of $\ol{g}_k$ from~\eqref{eq:stability}. 
 It was shown in Proposition~\ref{basicsymm} that in this case, $[R^{(N_j)}]^{-1}\mu^{(N_j)} < 0$ if and only if 
\begin{equation}
\label{eq:comp-of-g-bar}
\ol{g}_k > \ol{g}_{N_j},\ k = 1, \ldots, N_j-1.
\end{equation}
If the inequality~\eqref{eq:comp-of-g-bar} is true for $j = 1, 2, \ldots$, then 
$$
\pi^{(N_j)} = \bigotimes\limits_{k=1}^{N_j-1}\Exp\left(\la^{(N_j)}_k\right),\ \ \la^{(N_j)}_k := \frac{2k}{\si^2}\left(\ol{g}_k - \ol{g}_{N_j}\right).
$$
Assume the sequence $(g_n)_{n \ge 1}$ is bounded from below, as in~\eqref{eq:bounds-g-si}. Then the sequence $(\ol{g}_{N_j})_{j \ge 1}$ is also bounded below. From~\eqref{eq:comp-of-g-bar}, we get: $\ol{g}_{N_j} > \ol{g}_{N_{j+1}}$ for $j = 1, 2, \ldots$. Therefore, there exists the limit $\lim_{j \to \infty}\ol{g}_{N_j} =: \ol{g}_{\infty}$. Then, as $j \to \infty$, we get: 
$$
\la^{(N_j)}_k \to \la_k := \frac{2k}{\si^2}\left(\ol{g}_k - \ol{g}_{\infty}\right).
$$
Thus, the distribution $\pi$ has the following product-of-exponentials form:
\begin{equation}
\label{eq:pi-SS-symm}
\pi = \bigotimes\limits_{k=1}^{\infty}\Exp(\la_k) = \bigotimes\limits_{k=1}^{\infty}\Exp\left(\frac{2k}{\si^2}\left(\ol{g}_k - \ol{g}_{\infty}\right)\right).
\end{equation}
If $\la_k,\ k = 1, 2, \ldots$, satisfy Lemma~\ref{satisfaction}, then $\pi$ is a stationary distribution. 

\begin{exm} Consider an infinite system with symmetric collisions, with drift and diffusion coefficients
$$
g_1, g_2, \ldots, g_M > 0,\ g_{M+1} = g_{M+2} = \ldots = 0,\ \si_1 = \si_2 = \ldots = 1.
$$
Then 
$$
\ol{g}_k = \frac{g_1 + \ldots + g_M}k,\ k > M.
$$
Therefore, $\ol{g}_{\infty} = \lim_{k \to \infty}\ol{g}_k = 0$, and the parameters $\la_k$ from~\eqref{eq:pi-SS-symm} are equal to 
$$
\la_k = \begin{cases}
2(g_1+\ldots + g_k),\ 1 \le k \le M;\\
2(g_1 + \ldots + g_M),\ k > M.
\end{cases}
$$
These parameters satisfy  Lemma~\ref{satisfaction} (i). Therefore, the conclusions of this section are valid. In particular, if $g_1 = \ldots = g_M = 1$, as in Theorem~\ref{thm1} from the Introduction, then 
$$
\pi = \Exp(2)\otimes\Exp(4)\otimes\ldots\otimes\Exp(2M)\otimes\Exp(2M)\otimes\ldots
$$
\end{exm}

\subsection{Convergence Results}

Now, consider questions of {\it convergence} of the gap process as $t \to \infty$ to the stationary distribution $\pi$ constructed above. Let us outline the facts proved in this subsection (omitting the required conditions for now). 

\medskip

(a) The family of random variables $Z(t), t \ge 0$, is tight in $\BR^{\infty}_+$ with respect to the componentwise convergence (which is metrizable by a certain metric). Any weak limit point of $Z(t)$ as $t \to \infty$ is stochastically dominated by $\pi$. 

\medskip

(b) If we start the approximative version of the infinite system $Y$ with gaps stochastically larger than $\pi$, then the gap process converges weakly to $\pi$. 

\medskip

(c) Any other stationary distribution for the gap process (if it exists) {\it must} be stochastically smaller than $\pi$. 

\medskip

The rest of this subsection is devoted to precise statements of these results.

\begin{thm} Consider any version (not necessarily approximative) of the infinite system of competing Brownian particles with parameters 
$$
(g_n)_{n \ge 1},\ (\si_n^2)_{n \ge 1},\ (q^{\pm}_n)_{n \ge 1}.
$$
Suppose Assumption 1 holds. 

\medskip

(i) Then the family of $\BR^{\infty}_+$-valued random variables $Z(t),\ t \ge 0$ is tight in $\BR^{\infty}_+$. 

\medskip

(ii) Suppose for some sequence $t_j \uparrow \infty$ we have: $Z(t_j)\ \Ra\ \nu$ as $j \to \infty$, where $\nu$ is some probability measure on $\BR^{\infty}_+$. Then $\nu \preceq \pi$: the measure $\nu$ is stochastically dominated by $\pi$. 

\medskip

(iii) Under conditions of Theorem~\ref{thm3} (ii), every stationary distribution $\pi'$ for the gap process is stochastically dominated by $\pi$: 
$\pi' \preceq \pi$.
\label{limitpointthm}
\end{thm}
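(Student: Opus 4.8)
\textbf{Overall strategy.}The plan is to reduce all three parts to the finite-system facts of Proposition~\ref{basic} by dominating the infinite gap process pathwise by a finite one, and then to feed this domination into the monotone construction of $\pi$.

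The central step is a comparison. For \emph{any} version $Y$ (with driving Brownian motions $B_1, B_2, \ldots$ as in Definition~\ref{main}), the bottom block $(Y_1, \ldots, Y_N)$ solves exactly the finite ranked SDE~\eqref{SDEasymm} for the parameters $(g_n, \si_n^2, q^{\pm}_n)_{1 \le n \le N}$, except that its top coordinate $Y_N$ carries one extra nonincreasing boundary term $-q^-_N L_{(N, N+1)}$ arising from the particle above. I would build a standalone finite system $\tilde Y^{(N)}$ with the same parameters, the same driving motions $B_1, \ldots, B_N$, and the matched initial condition $[Y(0)]_N$, and then invoke the pathwise comparison machinery of \cite[Section 3]{MyOwn2}: the extra downward push on the top particle only lowers the block and enlarges the free system's gaps, so that $0 \le Z_k(t) \le \tilde Z^{(N)}_k(t)$ for all $k \le N-1$ and $t \ge 0$, almost surely. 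This is a purely finite-dimensional statement about reflected SDEs and so applies to an arbitrary (not necessarily approximative) version. I expect this to be the main obstacle: one must verify that the comparison results of \cite{MyOwn2} tolerate the exogenous adapted boundary term $L_{(N,N+1)}$ on the top coordinate, i.e.\ that comparison survives an extra nondecreasing reflection on one side. This direction is the pathwise analogue of Lemma~\ref{Lemma123}, which already asserts that larger systems have stochastically smaller gaps.

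With this domination, part~(i) is immediate. Assumption~1 gives $[R^{(N_j)}]^{-1}\mu^{(N_j)} < 0$, so Proposition~\ref{basic}(ii) yields $\tilde Z^{(N_j)}(t) \Ra \pi^{(N_j)}$ as $t \to \infty$ from the given initial law; hence the family $\{\tilde Z^{(N_j)}(t) : t \ge 0\}$ is tight in $\BR^{N_j - 1}_+$. The bound $0 \le Z_k(t) \le \tilde Z^{(N_j)}_k(t)$ then forces $\{[Z(t)]_M : t \ge 0\}$ to be tight in $\BR^M_+$ for every $M$, which is exactly tightness of $\{Z(t)\}$ in $\BR^\infty_+$ for the componentwise topology. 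For part~(ii), fix $M$ and $N_j > M$; the same domination gives $[Z(t)]_M \preceq [\tilde Z^{(N_j)}(t)]_M$ for every $t$. Passing to the limit along $t_l \uparrow \infty$, and using that $\preceq$ is preserved under weak limits (here $Z(t_l) \Ra \nu$ and $\tilde Z^{(N_j)}(t_l) \Ra \pi^{(N_j)}$), I obtain $[\nu]_M \preceq [\pi^{(N_j)}]_M$. Letting $j \to \infty$ and using $[\pi^{(N_j)}]_M \Ra [\pi]_M$ from the construction of $\pi$, stability of $\preceq$ under weak limits again gives $[\nu]_M \preceq [\pi]_M$; since $M$ is arbitrary, domination of all finite marginals yields $\nu \preceq \pi$.

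Finally, part~(iii) is a corollary of part~(ii). A stationary distribution $\pi'$ is realized by a version with $Z(t) \sim \pi'$ for all $t$, so trivially $Z(t_l) \Ra \pi'$ along any $t_l \uparrow \infty$. Since part~(ii) holds for \emph{any} version under Assumption~1, and the conditions of Theorem~\ref{thm3} include Assumption~1, I conclude $\pi' \preceq \pi$.
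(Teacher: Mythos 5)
Your proposal matches the paper's proof essentially step for step: the paper dominates the infinite gap process pathwise by the gap process of the finite system built from the first $N_j$ particles (same parameters, same driving Brownian motions, matched initial conditions), citing \cite[Corollary 3.9, Remark 9]{MyOwn2} for exactly the bound $0 \le Z_k(t) \le Z^{(N_j)}_k(t)$ that you identify as the central step, and then combines Proposition~\ref{basic} with preservation of stochastic domination under weak limits to get (i), (ii), and (iii) just as you do. The only difference is that what you flag as the ``main obstacle'' --- verifying that the comparison tolerates the extra nondecreasing boundary term $-q^-_{N}L_{(N, N+1)}$ acting on the top particle of the block --- is precisely what the cited remark of \cite{MyOwn2} supplies, so the paper needs no additional verification there.
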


\begin{rmk} Let us stress: we do not need $Y$ to be an approximative version of the system, and we do not need the initial conditions $Y(0) = y$ to satisfy~\eqref{seriesalpha}. 
\end{rmk}

\begin{thm} Consider an approximative version $Y$ of the infinite system of competing Brownian particles with parameters $(g_n)_{n \ge 1},\ (\si_n^2)_{n \ge 1},\ (q^{\pm}_n)_{n \ge 1}$. Let $Z$ be the corresponding gap process. Suppose it satisfies conditions of Theorem~\ref{thm3} (ii). Then we can construct the distribution $\pi$, and it is a stationary distribution for the gap process. If $Z(0) \succeq \pi$, then 
$$
Z(t) \Ra \pi,\ \ t \to \infty.
$$
\label{thm:main-conv}
\end{thm}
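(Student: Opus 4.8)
The plan is to show that every weak subsequential limit of $Z(t)$, as $t \to \infty$, is both dominated by and dominates $\pi$, hence equals $\pi$; since the family $\{Z(t)\}_{t \ge 0}$ is already known to be tight, this forces $Z(t) \Ra \pi$. Domination from above is exactly Theorem~\ref{limitpointthm}(ii), so the substantive work is to produce the matching domination from below, namely $Z(t) \succeq \pi$ for every $t$, out of the single hypothesis $Z(0) \succeq \pi$.

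First I would fix a stationary comparison system. Under the conditions of Theorem~\ref{thm3}, the measure $\pi$ is a genuine stationary distribution, so there is an approximative version $\ol{Y}$ of the same infinite system whose gap process $\ol{Z}$ satisfies $\ol{Z}(t) \sim \pi$ for every $t \ge 0$. Since $Z(0) \succeq \pi$ while $\ol{Z}(0) \sim \pi$, the initial laws satisfy $\ol{Z}(0) \preceq Z(0)$ in the stochastic order. Applying the stochastic (rather than pathwise) form of the gap comparison in Corollary~\ref{comparinfcor}(ii), as licensed by Remark~\ref{RemarkA}, to the two approximative versions $\ol{Y}$ and $Y$, I obtain $\ol{Z}(t) \preceq Z(t)$ for all $t \ge 0$; equivalently, by stationarity of $\ol{Z}$, $Z(t) \succeq \pi$ for every $t \ge 0$.

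Now take any sequence $t_j \uparrow \infty$. By Theorem~\ref{limitpointthm}(i) the family is tight, so along a subsequence $Z(t_{j_k}) \Ra \nu$. Theorem~\ref{limitpointthm}(ii) gives $\nu \preceq \pi$. For the reverse, each upper orthant $[y, \infty)$ is closed for componentwise convergence, so the portmanteau inequality for closed sets yields $\nu[y, \infty) \ge \limsup_k \MP(Z(t_{j_k}) \ge y) \ge \pi[y, \infty)$, the last bound because $Z(t) \succeq \pi$ for all $t$; hence $\nu \succeq \pi$. Having $\nu \preceq \pi$ and $\nu \succeq \pi$ means $\nu[y, \infty) = \pi[y, \infty)$ for all $y \in \BR^{\infty}$, and since these upper-orthant probabilities determine the finite-dimensional marginals and thus (by Kolmogorov consistency, as in the construction of $\pi$) the law itself, I conclude $\nu = \pi$. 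As every subsequential limit equals $\pi$, tightness upgrades this to $Z(t_j) \Ra \pi$ along the full sequence, and since $t_j \uparrow \infty$ was arbitrary, $Z(t) \Ra \pi$.

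The main obstacle I anticipate is the lower-bound step: it requires that the pathwise gap comparison, proved in \cite{MyOwn2} for finite systems and lifted to approximative infinite versions in Corollary~\ref{comparinfcor}, be deployed in its stochastic-coupling form against a \emph{truly} stationary copy $\ol{Y}$ of the system, which is precisely what Theorem~\ref{thm3}(ii) supplies. Two subsidiary points must be checked: that equality of all upper-orthant probabilities pins down a measure on $\BR^{\infty}_+$ (this reduces to the finite-dimensional statement via marginals), and that stochastic domination is preserved under weak limits (which, as above, follows from closedness of the orthants $[y, \infty)$). Both are routine once the lower bound $Z(t) \succeq \pi$ is established.
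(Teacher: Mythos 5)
Your proof is correct and is essentially the paper's own argument: both establish $Z(t) \succeq \pi$ for every $t \ge 0$ by comparing against a stationary copy $\ol{Y}$ furnished by Theorem~\ref{thm3}(ii), using Corollary~\ref{comparinfcor}(ii) (in its stochastic form, per Remark~\ref{RemarkA}), and then conclude via Theorem~\ref{limitpointthm}(i),(ii). The only difference is one of detail: you spell out the squeeze ($\nu \preceq \pi$ and $\nu \succeq \pi$ imply $\nu = \pi$) and the portmanteau argument for preservation of stochastic domination under weak limits, steps which the paper's very terse proof leaves implicit.
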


\begin{proof} Let us show that for each $t \ge 0$ we have: $Z(t) \succeq \pi$. (Together with Theorem~\ref{limitpointthm} (i), (ii), this completes the proof.) Consider another system $\ol{Y}$: an approximative version of the system with the gap process $\ol{Z}$ having stationary distribution $\pi$. Then $Z(0) \succeq \ol{Z}(0) \sim \pi$. 
By Corollary~\ref{comparinfcor} (ii) above, $Z(t) \succeq \ol{Z}(t) \sim \pi,\ t \ge 0$. 
\end{proof}

\section{Triple Collisions for Infinite Systems} 

Let us define triple and simultaneous collisions for an infinite ranked system $Y = (Y_n)_{n \ge 1}$ of competing Brownian particles. 

\begin{defn} We say that a {\it triple collision between particles $Y_{k-1}$, $Y_k$ and $Y_{k+1}$} occurs at time $t \ge 0$ if 
$$
Y_{k-1}(t) = Y_k(t) = Y_{k+1}(t).
$$
We say that a {\it simultaneous collision} occurs at time $t \ge 0$ if for some $1 \le k < l$, we have:
$$
Y_k(t) = Y_{k+1}(t)\ \ \mbox{and}\ \ Y_l(t) = Y_{l+1}(t).
$$
\end{defn}

A triple collision is a particular case of a simultaneous collision. For finite systems of competing Brownian particles (both classical and ranked), the question of a.s. absence of triple collisions was studied in \cite{IK2010, IKS2013, KPS2012}. A necessary and sufficient condition for a.s. absence of any triple collisions was found in \cite{MyOwn3}; see also \cite{MyOwn5} for related work. 
This condition also happens to be sufficient for a.s. absence of any simultaneous collisions. In general, triple collisions are undesirable, because strong existence and pathwise uniqueness for classical systems of competing Brownian particles was shown in \cite{IKS2013} only up to the first moment of a triple collision. Some results about triple collisions for infinite classical systems were obtained in the paper \cite{IKS2013}. Here, we strengthen them a bit and also prove results for asymmetric collisions. 

It turns out that the same necessary and sufficient condition works for infinite systems as well as for finite systems. 

\begin{thm} Consider a version of the infinite ranked system of competing Brownian particles $Y = (Y_n)_{n \ge 1}$ with parameters
$$
(g_n)_{n \ge 1},\ (\si_n^2)_{n \ge 1},\ (q^{\pm}_n)_{n \ge 1}.
$$

\medskip

(i) Assume this version is locally finite. If
\begin{equation}
\label{conditionasymm:inf}
(q^-_{k-1} + q^+_{k+1})\si_k^2 \ge q^-_k\si^2_{k+1}  + q^+_k\si_{k-1}^2,\ \ k = 2, 3, \ldots
\end{equation}
Then a.s. for any $t > 0$ there are no triple and no simultaneous collisions at time $t$. 

\medskip

(ii) If the condition~\eqref{conditionasymm:inf} is violated for some $k = 2, 3, \ldots$, then with positive probability there exists a moment $t > 0$ such that there is a triple collision between particles with ranks $k-1,\ k$, and $k+1$ at time $t$. 
\label{thmasymminf}
\end{thm}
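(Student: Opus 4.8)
The plan is to reduce the infinite-system statement to the finite-system results for triple and simultaneous collisions, which are already available in the literature (\cite{MyOwn3, IK2010, IKS2013, KPS2012}), using the approximation structure and the comparison techniques developed in Section 3. For part (i), I would argue as follows. Since the system is locally finite, for any fixed rank $k$ and any $T > 0$, only finitely many particles can dip below any given level on $[0, T]$; in particular the local dynamics of the triple $(Y_{k-1}, Y_k, Y_{k+1})$ is governed, near a potential collision, by finitely many neighboring particles. The key reduction is that whether a triple collision occurs at rank $k$ is a \emph{local} question, depending only on the behavior of finitely many gaps $Z_{k-1}, Z_k$ (and the local times coupling them to their neighbors). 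I would therefore isolate a finite block of ranks around $k$, compare it (via the ordering in Corollary~\ref{cor:comp2} and Corollary~\ref{comparinfcor}) with a genuine finite system, and invoke the finite-system no-triple-collision criterion: condition~\eqref{conditionasymm:inf} is exactly the infinite analogue of the skew-symmetry-type inequality~\eqref{SS} that is known to preclude triple collisions in the finite case.

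For the concrete mechanism I would use the standard approach to triple collisions for reflected/competing systems: form the two adjacent gaps $Z_{k-1} = Y_k - Y_{k-1}$ and $Z_k = Y_{k+1} - Y_k$, and study the two-dimensional nonnegative semimartingale $(Z_{k-1}, Z_k)$. A triple collision at rank $k$ is precisely the event that $(Z_{k-1}, Z_k)$ hits the corner $(0,0)$. Writing down the semimartingale decomposition of this pair from Definition~\ref{main}(ii), one sees that it behaves like a two-dimensional reflected Brownian motion in the orthant $\BR^2_+$ with a reflection matrix whose off-diagonal entries are built from the collision parameters $q^{\pm}$ and whose covariance involves $\si_{k-1}^2, \si_k^2, \si_{k+1}^2$. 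The condition for such a process to a.s. avoid the corner is governed by a parameter that, after the computation carried out in the finite case in \cite{MyOwn3}, is exactly~\eqref{conditionasymm:inf}. The simultaneous-collision statement then follows because a simultaneous collision at distinct ranks $k < l$ forces, in particular, corner-hitting for at least one adjacent pair, and the same local argument applies rank by rank; a countable union over $k$ of null events is still null, giving a.s. absence for \emph{all} ranks at once.

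For part (ii), the strategy is the converse direction: if~\eqref{conditionasymm:inf} fails at some $k$, I would exhibit positive probability of a triple collision at ranks $k-1, k, k+1$ by again localizing to a finite block and comparing with a finite system for which the finite-system result guarantees a triple collision with positive probability. The comparison inequalities let me sandwich the relevant infinite gaps between those of an auxiliary finite system; since the finite system has a positive-probability triple collision and the ordering is pathwise (Corollary~\ref{comparinfcor}), I can transfer the positive-probability event to the infinite system. Here one must be slightly careful that the comparison is in the right direction to \emph{force} a collision rather than merely permit one, so the auxiliary finite system should be chosen so that its gaps dominate (or are dominated by) the infinite ones in the sense that drives the two gaps to zero simultaneously.

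The main obstacle I anticipate is the \emph{localization} step: rigorously justifying that the occurrence or non-occurrence of a triple collision at rank $k$ depends only on finitely many neighboring particles, despite the infinitely many particles above that could in principle push down through the local times. Local finiteness (Lemma~\ref{aux}) controls how many particles enter a bounded region in bounded time, which is the technical engine, but converting this into a clean reduction to the finite-system covariance/reflection computation of \cite{MyOwn3}—so that the \emph{same} threshold~\eqref{conditionasymm:inf} emerges—requires care, particularly in checking that the semimartingale local-time structure of $(Z_{k-1}, Z_k)$ near the corner is genuinely unaffected by the tail of the system up to the stopping time of interest. I would expect to handle this by stopping the process before any fourth particle can reach the neighborhood of the triple, so that on that stopping interval the dynamics coincide exactly with those of a finite subsystem.
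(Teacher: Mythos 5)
Your part (ii) is exactly the paper's argument: the paper compares the infinite system with the three-particle system $\ol{Y} = (\ol{Y}_{k-1}, \ol{Y}_k, \ol{Y}_{k+1})'$ driven by $B_{k-1}, B_k, B_{k+1}$ and started from $(Y_{k-1}(0), Y_k(0), Y_{k+1}(0))'$; by \cite[Corollary 3.10, Remark 9]{MyOwn2} the finite system's gaps dominate, $Z_{k-1}(t) \le \ol{Z}_{k-1}(t)$ and $Z_k(t) \le \ol{Z}_k(t)$, so when $\ol{Y}$ has its positive-probability triple collision (\cite[Theorem 2]{MyOwn3}), nonnegativity forces $Z_{k-1}(t) = Z_k(t) = 0$. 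So the hedge in your last sentence must be resolved in one specific direction: the auxiliary system's gaps must \emph{dominate} those of the infinite system. Your part (i), in its final form (stop before outside particles can interfere, so that the dynamics coincide exactly with a finite subsystem, then invoke finite-system results), is also the paper's strategy: on the event of a collision at ranks $k<l$ at time $t$, local finiteness (Lemma~\ref{aux}) yields $m > l$ with $Y_l(t) = \ldots = Y_m(t) < Y_{m+1}(t)$, hence a rational-endpoint interval $[q_-, q_+] \ni t$ on which $L_{(m,m+1)}$ is constant, so $(Y_1, \ldots, Y_m)$ is literally a finite system of $m$ competing Brownian particles on that interval; a countable union over $k, l, m$ and rational $q_- < q_+$ finishes the proof.

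There is, however, a genuine gap in your treatment of simultaneous collisions in part (i). You claim that a simultaneous collision at ranks $k < l$ ``forces corner-hitting for at least one adjacent pair''; this is false whenever $l > k+1$: there one only has $Z_k(t) = 0$ and $Z_l(t) = 0$, and no two \emph{adjacent} gaps need vanish together, so no pair $(Z_{j-1}, Z_j)$ hits the corner of $\BR^2_+$. Hence the two-dimensional corner-avoidance mechanism cannot rule out simultaneous collisions. Relatedly, the pair $(Z_{k-1}, Z_k)$ is not autonomous---it is coupled to $Z_{k-2}$ and $Z_{k+1}$ through the local times $L_{(k-2,k-1)}$ and $L_{(k+1,k+2)}$---so it is not itself a reflected Brownian motion in the quadrant; in \cite{MyOwn3} the analysis is carried out for the full gap process of the finite system, where a simultaneous collision corresponds to hitting the intersection of two faces of the orthant (a codimension-two edge), not a planar corner. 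The paper sidesteps all of this: after the localization it applies \cite[Theorems 1.1, 1.2]{MyOwn3} to the finite system of ranks $1, \ldots, m$, and those theorems already exclude simultaneous collisions at arbitrary pairs of ranks under condition~\eqref{conditionasymm:inf}. One more point: the finite subsystem must extend down to rank $1$ (as the paper's does), or else be cut below at another gap that stays open; a block of ranks ``around $k$'' starting above rank $1$ remains coupled to the particles beneath it through a local time and is not a finite system of competing Brownian particles. With these substitutions---cite the finite-system simultaneous-collision theorem instead of an adjacent-pair corner argument, and take the finite subsystem to be ranks $1$ through $m$---your proof goes through.
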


An interesting corollary of \cite[Theorem 1.2]{MyOwn3} for finite systems is that if there are a.s. no triple collisions, then there are also a.s. no simultaneous collisions. This is also true for infinite systems constructed in Theorem~\ref{thm2}. 

\begin{rmk} For symmetric collisions: $q^{\pm}_n = 1/2,\ n = 1, 2, \ldots$, this result takes the following form. 
There are a.s. no triple collisions if and only if the sequence $(\si_k^2)_{k \ge 1}$ is concave. In this case, there are also a.s. no simultaneous collisions. If for some $k \ge 1$ we have:
$$
\si_{k+1}^2 < \frac12\left(\si_k^2 + \si_{k+2}^2\right),
$$
then with positive probability there exists $t > 0$ such that $Y_k(t) = Y_{k+1}(t) = Y_{k+2}(t)$. 
\label{rmk:symm-3ple}
\end{rmk}

\begin{rmk} Let us restate the main result of \cite{IKS2013}: for a infinite classical systems of competing Brownian particles which satisfies conditions of Theorem~\ref{existenceinfclassical}, there exists a unique strong version up to the first triple collision. In particular, if the sequence of diffusion coefficients $(\si_k^2)_{k \ge 1}$ is concave, then there exists a unique strong solution on the infinite time horizon. 
\end{rmk}

\begin{rmk} Partial results of \cite{IKS2013} for infinite classical systems of competing Brownian particles are worth mentioning: if there are a.s. no triple collisions, then $(\si_k^2)_{k \ge 1}$ is concave; if the sequence $(0, \si_1^2, \si_2^2, \ldots)$ is concave, then there are a.s. no triple collisions. In particular, it was already shown in \cite{IKS2013} that the model~\eqref{eq:M-Atlas}, as any model with $\si_1 = \si_2 = \ldots = 1$, a.s. does not have triple collisions. 
\end{rmk}

\section{Proofs}

\subsection{Proof of Proposition~\ref{basic}} The concept of a {\it semimartingale reflected Brownian motion} (SRBM) in the positive orthant $\BR^d_+$ is discussed in the survey \cite{WilliamsSurvey}; we refer the reader to this article for definition and main known results about this process. Here, we informally introduce the concept. Take a $d\times d$-matrix $R$ with diagonal elements equal to $1$, and denote by $r_i$ the $i$th column of $R$. Next, take a symmetric positive definite $d\times d$-matrix $A$, as well as $\mu \in \BR^d$. A {\it semimartingale reflected Brownian motion} (SRBM) {\it in the orthant} with {\it drift vector} $\mu$, {\it covariance matrix} $A$, and {\it reflection matrix} $R$ is a Markov process in $\BR^d_+$ such that:

\medskip

(i) when it is in the interior of the orthant, it behaves as a $d$-dimensional Brownian motion with drift vector $\mu$ and covariance matrix $A$;

\medskip

(ii) at each face $\{x \in \BR^d_+\mid x_i = 0\}$ of the boundary of this orthant, it is reflected instantaneously according to the vector $r_i$ (if $r_i = e_i$, which is the $i$th standard unit vector in $\BR^d$, this is {\it normal} reflection). 

\medskip

It turns out that $Z$ is an SRBM in the orthant $\BR^{N-1}_+$ with reflection matrix $R$ given by~\eqref{R}, drift vector $\mu$ as in~\eqref{mu}, and covariance matrix 
\begin{equation}
\label{A}
A = 
\begin{bmatrix}
\si_1^2 + \si_2^2 & -\si_2^2 & 0 & 0 & \ldots & 0 & 0\\
-\si_2^2 & \si_2^2 + \si_3^2 & -\si_3^2 & 0 & \ldots & 0 & 0\\
0 & -\si_3^2 & \si_3^2 + \si_4^2 & -\si_4^2 & \ldots & 0 & 0\\
\vdots & \vdots & \vdots & \vdots & \ddots & \vdots & \vdots\\
0 & 0 & 0 & 0 & \ldots & \si_{N-2}^2 + \si_{N-1}^2 & -\si_{N-1}^2\\
0 & 0 & 0 & 0 & \ldots & -\si_{N-1}^2 & \si_{N-1}^2 + \si_N^2
\end{bmatrix}
\end{equation}
See \cite[subsection 2.1]{KPS2012}, \cite{MyOwn3, 5people}. The results of Proposition~\ref{basic} follow from the properties of an SRBM. Property (i) of the matrix $R$ was proved in \cite[subsection 2.1]{KPS2012}; see also \cite[Lemma 2.9]{MyOwn3}. The {\it skew-symmetry condition} for an SRBM is written in the form
$$
RD + DR' = 2A,
$$
where $D = \diag(A)$ is the $(N-1)\times(N-1)$-diagonal matrix with the same diagonal entries as $A$. As mentioned in \cite[Theorem 3.5]{WilliamsSurvey}, this is a necessary and sufficient condition for the stationary distribution to have product-of-exponentials form. This condition can be rewritten for $R$ and $A$ from~\eqref{R} and~\eqref{A} as~\eqref{SS}. 

%

\subsection{Proof of Lemma~\ref{notie}} There is a tie for a system of competing Brownian particles at time $t > 0$ if and only if the gap process at time $t$ hits the boundary of the orthant $\BR^{N-1}_+$. But the gap process is an SRBM $Z = (Z(t), t \ge 0)$ in $\BR^{N-1}_+$, with the property from \cite{RW1988}: $\MP(Z(t) \in \pa\BR^{N-1}_+) = 0$ for every $t > 0$. 

\subsection{Proof of Theorem~\ref{existenceinfclassical}} Because of the results of \cite{IKS2013}, we need only to prove the following condition. Fix  $T > 0$ and $x \in \BR$. Let $\Xi$ be the set of all progressively measurable real-valued processes $\zeta = (\zeta(t))_{0 \le t \le T}$ with values in $[\min_{i \ge 1}\si_i, \max_{i \ge 1}\si_i]$. Then for every $\zeta \in \Xi$, 
\begin{equation}
\label{seriescomplex}
\SL_{i=1}^{\infty}\sup\limits_{\xi \in \Xi}\MP\left(x_i - \ol{g}T - \max\limits_{0 \le t \le T}\int_0^t\zeta(s)\md W_i(s) < x\right) < \infty,
\end{equation}
But this follows from Lemma~\ref{lemma:estimate-special} and Lemma~\ref{lemma:special-series}.

\subsection{Proof of Theorem~\ref{thm:Girsanov-existence}} The proof closely follows that of \cite[Lemma 11]{PP2008}. Assume without loss of generality that initially, the particles are ranked, that is, $x_k \le x_{k+1}$ for $k \ge 1$. Consider i.i.d. standard Brownian motions $W_1, W_2, \ldots$, and let $X_i(t) = x_i + W_i(t),\ i \ge 1$. 

\begin{lemma} For every $t \ge 0$, the sequence $X(t) = (X_n(t))_{n \ge 1}$ is rankable.
\end{lemma}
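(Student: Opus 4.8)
The plan is to reduce rankability to the single statement that $X_n(t) \to \infty$ as $n \to \infty$. First I record the elementary fact that a sequence $(a_n)_{n \ge 1}$ of real numbers is rankable as soon as, for every $c \in \BR$, there are only finitely many indices $n$ with $a_n \le c$: in that case the set $\{n : a_n \le \inf_m a_m + 1\}$ is finite and nonempty, so the infimum is attained, one assigns rank $1$ to a minimizer (breaking ties lexicographically), and one proceeds inductively, each level $\le c$ being exhausted after finitely many steps. Since the hypothesis \eqref{seriesalpha} already gives $x_n \to \infty$, it suffices to prove that almost surely $X_n(t) = x_n + W_n(t) \to \infty$, i.e. that for every $c$ only finitely many particles lie below level $c$ at time $t$.

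Next I estimate the probability that a single particle is low. Fixing $c \in \BR$ and $t > 0$ (the case $t = 0$ being the hypothesis on $x$), for $x_n > c$ we have $\MP(X_n(t) \le c) = \MP(W_n(t) \le c - x_n) = \Psi\bigl((x_n - c)/\sqrt{t}\bigr)$, and the standard Gaussian tail bound $\Psi(u) \le e^{-u^2/2}$ for $u \ge 0$ gives $\MP(X_n(t) \le c) \le \exp\bigl(-(x_n - c)^2/(2t)\bigr)$. For all $n$ large enough that $x_n \ge 2|c|$ one has $(x_n - c)^2 \ge \tfrac12 x_n^2$, so the summand is bounded by $\exp(-\al x_n^2)$ with $\al = 1/(4t) > 0$. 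By \eqref{seriesalpha} the series $\SL_n \exp(-\al x_n^2)$ converges, hence $\SL_n \MP(X_n(t) \le c) < \infty$.

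The Borel--Cantelli lemma then shows that, for each fixed integer $c$, almost surely only finitely many $n$ satisfy $X_n(t) \le c$; intersecting over $c \in \MZ$ (a countable family) yields $\liminf_n X_n(t) = +\infty$ almost surely, i.e. $X_n(t) \to \infty$, and rankability follows from the first paragraph. To obtain the conclusion simultaneously for all $t \ge 0$ I would run the same argument on the running minimum: by the reflection principle, for a horizon $T > 0$ and $x_n > c$, $\MP\bigl(\min_{0 \le t \le T} X_n(t) \le c\bigr) = 2\Psi\bigl((x_n - c)/\sqrt{T}\bigr) \le 2\exp\bigl(-(x_n-c)^2/(2T)\bigr)$, which is summable by the identical estimate with $\al = 1/(4T)$; Borel--Cantelli together with a countable intersection over integer levels $c$ and integer horizons $T$ then gives that almost surely, for every horizon, at most finitely many particles are ever below any fixed level on $[0,T]$, whence $X_n(t) \to \infty$ for all $t$ at once.

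The argument is essentially routine once the reduction in the first paragraph is in place; the only points requiring care are the comparison $(x_n - c)^2 \ge \tfrac12 x_n^2$ that lets me invoke \eqref{seriesalpha} with a single $\al$, and the measure-theoretic bookkeeping (reflection principle plus countable intersections) needed to upgrade the almost-sure statement from a fixed $t$ to all $t \ge 0$ simultaneously. I expect this last upgrade to be the main, though modest, obstacle; it parallels the Appendix estimates already invoked in the proof of Theorem~\ref{existenceinfclassical}.
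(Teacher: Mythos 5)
Your proof is correct and takes essentially the same route as the paper, which reduces the lemma to local finiteness of the system and derives it from Lemma~\ref{lemma:estimate-special} (for driftless, unit-diffusion particles this is exactly your reflection-principle bound on the running minimum), Lemma~\ref{lemma:special-series} (summability of the Gaussian tails under \eqref{seriesalpha}), and the Borel--Cantelli lemma. The only repair needed is arithmetic: $x_n \ge 2|c|$ does not yield $(x_n - c)^2 \ge \tfrac12 x_n^2$ (try $x_n = 2$, $c = 1$); require $x_n \ge 4|c|$ instead, or use the inequality $(x_n - c)^2 \ge \tfrac12 x_n^2 - c^2$, which is how the paper's Lemma~\ref{lemma:special-series} handles the same shift.
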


\begin{proof} It suffices to show that the system $X$ is locally finite. This statement follows from Lemmata~\ref{lemma:estimate-special},~\ref{lemma:special-series}, the Borel-Cantelli lemma and the fact that the initial condition $x$ satisfies~\eqref{seriesalpha}. 
\end{proof}

Recall our standard setting: $(\Oa, \CF, (\CF_t)_{t \ge 0}, \MP)$. Let $\mP_t$ be the ranking permutation of the sequence $X(t)$. Fix $T > 0$ and apply Girsanov theorem to $X = (X_n)_{n \ge 1}$ on $\CF_T$. 
We construct the new measure
$$
\left.\mathbf Q\right|_{\CF_t} = D(t)\cdot\left.\MP\right|_{\CF_t},\ \ \mbox{where}\ \ 
D(t) := \exp\Bigl(M_{\infty}(t) - \frac12\langle M_{\infty}\rangle_t\Bigr),\ \ t \ge 0,
$$
and 
\begin{equation}
\label{eq:M-infty}
M_{\infty}(t) = \SL_{i=1}^{\infty}\SL_{k=1}^{\infty}\int_0^tg_k1\left(\mP_s(k) = i\right)\md W_i(s).
\end{equation}
It suffices to show that the process $M_{\infty}$ exists and is a continuous square-integrable martingale, with $\langle M_{\infty}\rangle_t = Gt$ for all $t \ge 0$. 
Indeed, the rest follows from Girsanov theorem. Fix $T > 0$. Consider the space $\mathbb M$ of continuous square-integrable martingales $M = (M(t), 0 \le t \le T)$, starting from $M(0) = 0$. This is a Hilbert space with the following inner product and norm:
$$
(M', M'') := \ME\langle M', M''\rangle_T,\ \ \mbox{and}\ \ \norm{M} := \left[\ME\langle M\rangle_T\right]^{1/2}.
$$
For each $i, k = 1, 2, \ldots$, define
$$
M_{i, k}(t) := \int_0^tg_k1\left(\mP_s(k) = i\right)\md W_i(s),\ \ t \ge 0.
$$
Then the process $M_{\infty}$ from~\eqref{eq:M-infty} can be represented as
\begin{equation}
\label{eq:M-infty-repr}
M_{\infty}(t) = \SL_{i=1}^{\infty}\SL_{k=1}^{\infty}M_{i, k}(t), \ \ t \ge 0.
\end{equation}

\begin{lemma} All processes $M_{i, k}$, $i, k = 1, 2, \ldots$, are elements of the space $\mathbb M$ and are orthogonal in this space.
\label{lemma:orthogonal}
\end{lemma}

\begin{proof} That each of these processes is a continuous square-integrable martingale is straightforward. Let us show that  $(M_{i', k'}, M_{i'', k''}) = 0$ when $i' \ne i''$ or $k' \ne k''$. Indeed, for $i' \ne i''$, this follows from the fact that the Brownian motions $W_{i'}$ and $W_{i''}$ are independent, and therefore, $\langle W_{i'}, W_{i''}\rangle_s \equiv 0$. For $i' = i'' = i$ and $k' \ne k''$, this follows from an observation that the mapping $\mP_s : \{1, 2, \ldots\} \to \{1, 2, \ldots\}$ is one-to-one for every $s \ge 0$, and therefore 
$$
1\left(\mP_s(k') = i\right)1\left(\mP_s(k'') = i\right) \equiv 0.
$$
\end{proof}

It is easy to see that 
\begin{equation}
\label{eq:sum-of-norms}
\SL_{i=1}^{\infty}\SL_{k=1}^{\infty}\norm{M_{i, k}}^2 = \SL_{i=1}^{\infty}\SL_{k=1}^{\infty}\int_0^Tg_k^21\left(\mP_s(k) = i\right)\md s = T\SL_{k=1}^{\infty}g_k^2 = TG. 
\end{equation}
From~\eqref{eq:sum-of-norms} and Lemma~\ref{lemma:orthogonal}, we get that the series~\eqref{eq:M-infty-repr} converges in the space $\mathbb M$, which proves that $M_{\infty}$ is a continuous square-integrable martingale. The calculation similar to the one in~\eqref{eq:sum-of-norms} with $t$ instead of $T$ shows that $\langle M_{\infty}\rangle_t \equiv Gt$. This completes the proof of Theorem~\ref{thm:Girsanov-existence}.

\subsection{Proof of Lemma~\ref{names2ranks}} Parts of this result were already proved in \cite{IKS2013} for (slightly more restrictive) conditions of Theorem~\ref{existenceinfclassical}. We can write each $X_i$ in the form
$$
X_i(t) = x_i + \int_0^t\be_i(s)\md s + \int_0^t\rho_i(s)\md W_i(s),\ \ t \ge 0,
$$
where the drift and diffusion coefficients
$$
\be_i(t) = \SL_{k = 1}^{\infty}1(\mP_t(k) = i)g_k,\ \ \rho_i(t) = \SL_{k=1}^{\infty}1(\mP_t(k) = i)\si_k
$$
satisfy the following inequalities:
$$
|\be_i(t)| \le \ol{g},\ \ |\rho_i(t)| \le \ol{\si},\ \ 0 \le t \le T.
$$
There exists a random but a.s. finite $i_0$ such that for $i \ge i_0$ we have: $x_i > \ol{g}T + u$. For these $i$, by Lemma~\ref{lemma:estimate-special} we have:
$$
\MP\left(\min\limits_{0 \le t \le T}X_i(t) \le u\right) \le 2\Psi\left(\frac{x_i - \ol{g}T - u}{\ol{\si}\sqrt{T}}\right).
$$
Apply Lemma~\ref{lemma:special-series} and the Borel-Cantelli lemma and finish the proof of the local finiteness. Now, let us show that a.s. there exist only finitely many $i$ such that $X_i(t) \le x_i/2$. There exists a random but a.s. finite $i_1$ such that for $i \ge i_1$ we have: $x_i/2 > \ol{g}T$. Then $x_i > x_i/2 + \ol{g}T$ for these $i$. For $i \ge i_0\vee i_1$, by Lemma~\ref{lemma:estimate-special} we have:
$$
\MP(X_i(t) \le x_i/2) \le \MP\left(\min\limits_{0 \le s \le t}X_i(s) \le x_i/2\right) \le 2\Psi\left(\frac{x_i - x_i/2 - \ol{g}T}{\ol{\si}\sqrt{T}}\right).
$$
Apply Lemma~\ref{lemma:special-series} and the Borel-Cantelli lemma. This proves that there exists a random but a.s. finite $i_2 \ge i_0\vee i_1$ such that $X_i(t) \ge x_i/2$ for $i \ge i_2$. Thus, for $i \ge i_2$, we have: $X_i(t) \ge x_i/2 \ge 0$, and almost surely, we get:
$$
\SL_{i=i_2}^{\infty}e^{-\al X_i(t)^2} \le \SL_{i=i_2}^{\infty} e^{-\al(x_i/2)^2} < \infty.
$$
Because $i_2$ is a.s. finite, this completes the proof. 

\subsection{Proof of Lemma~\ref{lemma:local-time}} This statement follows from similar statement for finite systems (see~\eqref{symmSDE}). Indeed, take the $k$th ranked particle $Y_k$ and let $u := \max_{[0, T]}Y_k + 1$. Let us show that for every $t \in [0, T]$ there exists a (possibly random) neighborhood of $t$ in $[0, T]$ such that ~\eqref{dynamics} holds. The statement of Lemma~\ref{lemma:local-time} would then follow from compactness of $[0, T]$ and the fact that $T > 0$ is arbitrary. 

Indeed, there exists an $i_0$ such that $\min_{[0, T]}X_i > u$ for $i > i_0$. Take the minimal such $i_0$. Then, take $m > k$ and assume the event $\{i_0 \le m\}$ happened. Fix time $t \in [0, T]$. We claim that if $Y_k$ does not collide at time $t$ with other particles, then there exists a (random) neighborhood when $Y_k$ does not collide with other particles. Indeed, particles $X_i,\ i > m$, cannot collide with $Y_k$, by definition of $u$ and $i_0$. And for every particle $X_i,\ i = 1, \ldots, m$, other than $Y_k$ (say $Y_k$ has name $j$ at time $t$), there exists a (random) open time neighborhood of $t$ such that this particle does not collide with $Y_k = X_j$ in this neighborhood. Take the finite intersection of these $m - 1$ neighborhoods and complete the proof of the claim. In this case, the formula~\eqref{dynamics} is trivial, because the local time terms $L_{(k-1, k)}$ and $L_{(k, k+1)}$ are constant in this neighborhood. 

Now, if $Y_k(t)$ does collide with particles $X_i,\ i \in I$, then $I \subseteq \{1, \ldots, m\}$. We claim that there exists a neighborhood of $t$ such that, in this neighborhood, the
particles $X_i,\ i \in I$, do not collide with any other particles. Indeed, for every $i \in I$, we have: $X_i(t) = Y_k(t) \le u - 1$. There exists a neighborhood of $t$ in which 
$X_i$ does not collide with any particles $X_l,\ l \in \{1, \ldots, m\}\setminus I$. There exists another neighborhood in which $X_i(t) < u$. Therefore, $X_i$ does not collide with any particles $X_l,\ l > m$. Intersect all these neighborhoods (there are $2|I|$ of them) and complete the proof of this claim. In this neighborhood, the system $(X_i)_{i \in I}$ behaves as a finite system of competing Brownian particles. It suffices to refer to~\eqref{symmSDE}.

\subsection{Proof of Theorem~\ref{thm2}} {\bf Step 1.} {\it $q^+_n \ge 1/2$ for all $n \ge 1$.} For $N \ge 2$, consider a ranked system 
$$
Y^{(N)} = \left(Y^{(N)}_1, \ldots, Y^{(N)}_N\right)',
$$
of $N$ competing Brownian particles, with parameters 
$$
(g_n)_{1 \le n \le N},\ \  (\si_n^2)_{1 \le n \le N},\ \  (q^{\pm}_n)_{1 \le n \le N},
$$ 
starting from $Y^{(N)}_k(0) = y_k,\ k = 1, \ldots, N$, with driving Brownian motions $B_1, B_2, \ldots, B_N$. Define the new parameters of collision
$$
\ol{q}^{\pm}_n = \frac12,\ n \ge 1.
$$
Consider another ranked system 
$$
\ol{Y}^{(N)} = \left(\ol{Y}^{(N)}_1, \ldots, \ol{Y}^{(N)}_N\right)',
$$
of $N$ competing Brownian particles, with parameters 
$$
(g_n)_{1 \le n \le N},\ \  (\si_n^2)_{1 \le n \le N},\ \  \left(\ol{q}^{\pm}_n\right)_{1 \le n \le N},
$$
starting from the same initial conditions $Y^{(N)}_k(0) = \ol{Y}^{(N)}_k(0) = y_k,\ k = 1, \ldots, N$, with the same driving Brownian motions $B_1, B_2, \ldots, B_N$. We can construct such a system in the strong sense, by result o f Section 2 and \cite{KPS2012} so that the sequences of driving Brownian motions $(B_1, \ldots, B_N)$ for each $N$ are nested into each other. By \cite[Corollary 3.9]{MyOwn2}, for $k = 1, \ldots, N$ and $t \ge 0$, we have:
\begin{equation}
\label{659}
\ol{Y}^{(N+1)}_k(t) \le \ol{Y}^{(N)}_k(t),\ \ Y^{(N+1)}_k(t) \le Y^{(N)}_k(t).
\end{equation}
Since $q^+_n \ge \ol{q}^+_n = 1/2$ for $n = 1, \ldots, N$, by \cite[Corollary 3.12]{MyOwn2}, we have: 
\begin{equation}
\label{109}
\ol{Y}^{(N)}_k(t) \le Y^{(N)}_k(t),\ \ t \ge 0,\ \ k = 1, \ldots, N.
\end{equation}

\begin{lemma}
For every $T > 0$, we have a.s.
$$
\lim\limits_{N \to \infty}\min\limits_{0 \le t \le T}\ol{Y}^{(N)}_1(t) = 
\inf\limits_{N \ge 2}\min\limits_{0 \le t \le T}\ol{Y}^{(N)}_1(t) > -\infty.
$$
\label{lemma:finite-limit}
\end{lemma}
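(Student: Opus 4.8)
The plan is to separate the two assertions in the statement. The equality $\lim_{N}=\inf_{N}$ is essentially free: the pathwise monotonicity~\eqref{659} shows that for each fixed $t$ the quantity $\ol{Y}^{(N)}_1(t)$ is nonincreasing in $N$, hence so is $\min_{0\le t\le T}\ol{Y}^{(N)}_1(t)$, and a nonincreasing sequence of reals converges to its infimum. All the real content lies in showing that this infimum is a.s.\ finite, i.e.\ bounded below.

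To produce a lower bound on the bottom ranked particle I would pass to the classical (named) picture. Since $\ol{Y}^{(N)}$ uses symmetric collisions $\ol{q}^{\pm}_n=1/2$, we may realize it as the ranking of a classical finite system $X^{(N)}=(X^{(N)}_1,\ldots,X^{(N)}_N)'$ with the same coefficients and initial data $[y]_N$, so that $\ol{Y}^{(N)}_1(t)=\min_{1\le i\le N}X^{(N)}_i(t)$. Writing $X^{(N)}_i(t)=y_i+\int_0^t\be_i(s)\,\md s+\int_0^t\rho_i(s)\,\md W_i(s)$, the key point is that by~\eqref{eq:bounds-g-si} the instantaneous drift obeys $\be_i(s)\ge\ol{g}$ and the diffusion obeys $\rho_i(s)^2\le\ol{\si}^2$, uniformly in $N$ and $i$. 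Crucially, to bound $\min_i X^{(N)}_i$ from below only the one-sided (lower) drift bound is needed, so the fact that $\sup_n g_n$ may be $+\infty$ does no harm. Applying Lemma~\ref{lemma:estimate-special} to each $X^{(N)}_i$ together with a union bound yields, for all sufficiently negative $u$ and all $N$,
$$
\MP\Bigl(\min_{0\le t\le T}\ol{Y}^{(N)}_1(t)\le u\Bigr)\le\SL_{i=1}^{N}\MP\Bigl(\min_{0\le t\le T}X^{(N)}_i(t)\le u\Bigr)\le\SL_{i=1}^{\infty}2\Psi\!\left(\frac{y_i-u-C}{\ol{\si}\sqrt{T}}\right)=:S(u),
$$
where $C=C(\ol{g},T)\ge 0$ and, decisively, the right-hand side no longer depends on $N$.

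It remains to control $S(u)$. The initial condition $y$ satisfies~\eqref{seriesalpha} (its monotone divergence to $+\infty$ is forced by $\SL_n e^{-\al y_n^2}<\infty$), so Lemma~\ref{lemma:special-series} gives $S(u)<\infty$, and dominated convergence then shows $S(u)\to 0$ as $u\to-\infty$. Finally, because the events $\{\min_{0\le t\le T}\ol{Y}^{(N)}_1<u\}$ increase in $N$ (again by~\eqref{659}), continuity of measure gives
$$
\MP\Bigl(\inf_{N}\min_{0\le t\le T}\ol{Y}^{(N)}_1<u\Bigr)=\lim_{N\to\infty}\MP\Bigl(\min_{0\le t\le T}\ol{Y}^{(N)}_1<u\Bigr)\le S(u),
$$
and since $\{\inf_N\min_{[0,T]}\ol{Y}^{(N)}_1=-\infty\}\subseteq\{\inf_N\min_{[0,T]}\ol{Y}^{(N)}_1<u\}$ for every $u$, letting $u\to-\infty$ forces this probability to be $0$. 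This is exactly the claim.

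The main obstacle is making the tail estimate \emph{uniform in $N$}: the dynamics of each $X^{(N)}_i$ genuinely depend on all $N$ particles through the ranking permutation, so an $N$-free bound is not obvious a priori. The resolution, and the crux of the argument, is that the only features of $X^{(N)}_i$ entering Lemma~\ref{lemma:estimate-special} are the two-sided diffusion bound and the one-sided lower drift bound from~\eqref{eq:bounds-g-si}, both of which hold for every $N$; everything after that is standard Gaussian-tail summation via Lemma~\ref{lemma:special-series}.
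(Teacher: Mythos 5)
Your proposal is correct and takes essentially the same route as the paper's own proof: realize $\ol{Y}^{(N)}$ as the ranking of a classical named system, apply Lemma~\ref{lemma:estimate-special} (using only the one-sided lower drift bound and two-sided diffusion bound from~\eqref{eq:bounds-g-si}) together with a union bound to obtain an $N$-free Gaussian-tail estimate, then sum via Lemma~\ref{lemma:special-series} and conclude by dominated convergence as the threshold tends to $-\infty$. The only cosmetic difference is that you make the final continuity-of-measure step over the monotone events explicit, which the paper handles by passing to the limit in $N$ of the probabilities before letting the threshold diverge.
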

The proof of Lemma~\ref{lemma:finite-limit} is postponed until the end of the proof of Theorem~\ref{thm2}. This lemma is used for the pathwise lower bound of the sequence $(\ol{Y}^{(N)}_1)_{N \ge 2}$ of processes. Assuming we proved this lemma, let us continue the proof of Theorem~\ref{thm2}. 

\medskip

{\bf Step 2.} Note that for all $s \in [0, T]$, 
$$
\min\limits_{0 \le t \le T}\ol{Y}^{(N)}_1(t) \le \ol{Y}^{(N)}_1(s).
$$
Therefore, by Lemma~\ref{lemma:finite-limit}, for every $k \ge 1$, $t \ge 0$, $N \ge k$, we have:
$$
Y^{(N)}_k(t) \ge \ol{Y}^{(N)}_k(t) \ge \ol{Y}^{(N)}_1(t) \ge \lim\limits_{N \to \infty}\min\limits_{0 \le t \le T}\ol{Y}^{(N)}_1(t).
$$
By~\eqref{659}, there exists a finite pointwise limit 
\begin{equation}
\label{eq:Y-k-limit}
Y_k(t) := \lim\limits_{N \to \infty}Y^{(N)}_k(t).  
\end{equation}
Now, let $L^{(N)} = \left(L^{(N)}_{(1, 2)}, \ldots, L^{(N)}_{(N-1, N)}\right)'$ be the vector of local times for the system $Y^{(N)}$. 

\begin{lemma}
\label{lemma:conv-of-local-times}
There exist a.s. continuous limits
$$
L_{(k, k+1)}(t) := \lim\limits_{N \to \infty}L^{(N)}_{(k, k+1)}(t),
$$
for each $k \ge 1$, uniform on every $[0, T]$. The limit $Y_k(t)$ from~\eqref{eq:Y-k-limit} is also continuous and uniform on every $[0, T]$ for every $k \ge 1$. 
\end{lemma}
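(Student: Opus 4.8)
The plan is to reduce everything to the pointwise convergence $Y^{(N)}_k(t)\to Y_k(t)$ from~\eqref{eq:Y-k-limit}, processing the ranks from the bottom upward. Writing the defining identity of Definition~\ref{main}(ii) for the finite system and solving for the upper local time gives
$$L^{(N)}_{(k,k+1)}(t)=\frac{1}{q^-_k}\Bigl(y_k+g_kt+\si_kB_k(t)+q^+_kL^{(N)}_{(k-1,k)}(t)-Y^{(N)}_k(t)\Bigr),$$
with $L^{(N)}_{(0,1)}\equiv0$; since $0<q^{\pm}_k<1$, the division is harmless. For $k=1$ the right-hand side is $(q^-_1)^{-1}\bigl(y_1+g_1t+\si_1B_1(t)-Y^{(N)}_1(t)\bigr)$, which converges pointwise, and then an induction on $k$ yields a pointwise limit $L_{(k,k+1)}(t):=\lim_{N}L^{(N)}_{(k,k+1)}(t)$ for every $k$ and $t$. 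Reading the same recursion together with the monotonicity $Y^{(N+1)}_k\le Y^{(N)}_k$ from~\eqref{659} shows, again by induction, that each $L^{(N)}_{(k,k+1)}(t)$ is nondecreasing in $N$; hence $L^{(N)}_{(k,k+1)}(t)\uparrow L_{(k,k+1)}(t)$ while $Y^{(N)}_k(t)\downarrow Y_k(t)$.

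Because the approximants are monotone in $N$ and continuous, the whole statement follows from Dini's theorem the moment the two families of limits are known to be continuous in $t$ on $[0,T]$; conversely a uniform limit of continuous functions is continuous, so the problem collapses to proving continuity of $Y_k$ and $L_{(k,k+1)}$, and I expect this to be the main obstacle. The limit $L_{(k,k+1)}$ is nondecreasing in $t$ and, as an increasing limit of continuous nondecreasing functions, lower semicontinuous, so only upward jumps must be excluded; by the recursion this is equivalent to excluding downward jumps of $Y_k$. The route I would take is to prove a modulus-of-continuity estimate for the collision local times that is \emph{uniform in $N$}. First, Lemma~\ref{lemma:finite-limit} together with the ordering $Y^{(N)}_k\ge\ol{Y}^{(N)}_1$ supplies a finite random lower bound on $[0,T]$, and feeding this into the recursion above gives, by induction on $k$, finite random upper bounds for $Y^{(N)}_k$ and $L^{(N)}_{(k,k+1)}$ that are independent of $N$; this confines the relevant dynamics to a bounded region. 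On that region I would control the increment $L^{(N)}_{(k,k+1)}(t)-L^{(N)}_{(k,k+1)}(s)$ by a one-sided Skorokhod (reflection) estimate for the gap $Z^{(N)}_k=Y^{(N)}_{k+1}-Y^{(N)}_k$, bounding it by the oscillation over $[s,t]$ of the corresponding driving path.

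The technical heart, and what I expect to fight with, is that the driving path of $Z^{(N)}_k$ contains the neighbouring local time $L^{(N)}_{(k+1,k+2)}$, so the one-sided estimates couple across ranks rather than decoupling cleanly from the bottom up; handling this calls for an oscillation bound for the gap process viewed as a semimartingale reflected Brownian motion, with a constant kept uniform in $N$ on the localized region (alternatively, one can run an Arzel\`a--Ascoli argument, proving equicontinuity of $\{(Y^{(N)}_1,\dots,Y^{(N)}_M)\}_N$ and identifying every subsequential limit with the pointwise limit $Y$). Once the uniform modulus is in hand, the limits $L_{(k,k+1)}$ and $Y_k$ have no jumps and are therefore continuous, and Dini's theorem upgrades the monotone pointwise convergences $L^{(N)}_{(k,k+1)}\uparrow L_{(k,k+1)}$ and $Y^{(N)}_k\downarrow Y_k$ to convergence uniform on each $[0,T]$, which is precisely the assertion of the lemma.
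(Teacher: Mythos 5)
Your first half — solving the finite-system identity for the top local time, deducing pointwise convergence of $L^{(N)}_{(k,k+1)}(t)$ by induction on $k$ from $Y^{(N)}_k(t)\to Y_k(t)$, and noting monotonicity in $N$ — is sound and matches the paper's starting point. The gap is in the second half: your plan makes continuity of the limits $L_{(k,k+1)}$ and $Y_k$ a \emph{prerequisite} (needed for Dini's theorem), and the route you propose for it, a modulus-of-continuity estimate for the local times that is uniform in $N$ via one-sided Skorokhod or SRBM oscillation bounds, is only sketched. You yourself flag the unresolved difficulty — the driving path of $Z^{(N)}_k$ contains the neighbouring local time $L^{(N)}_{(k+1,k+2)}$, so the estimates couple across ranks — and that coupling is exactly where such an argument gets hard. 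As written, the proof is incomplete at its technical heart.

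The missing idea is that the comparison machinery gives strictly more than monotonicity of the \emph{values} $L^{(N)}_{(k,k+1)}(t)$ in $N$: it gives monotonicity of the \emph{increments}. By \cite[Corollary 3.9]{MyOwn2} (restated in this paper as Corollary~\ref{comparinfcor}(ii) and Corollary~\ref{cor:comp2}), for $N<M$ and $0\le s\le t$ one has~\eqref{812}:
$$
L^{(N)}_{(k, k+1)}(t) - L^{(N)}_{(k, k+1)}(s) \le L^{(M)}_{(k, k+1)}(t) - L^{(M)}_{(k, k+1)}(s).
$$
Note that this cannot be recovered by your induction through the recursion, because $Y^{(N+1)}\le Y^{(N)}$ from~\eqref{659} says nothing about increments of $Y^{(N)}$. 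Letting $M\to\infty$ in the display, the function $s\mapsto L_{(k,k+1)}(s)-L^{(N)}_{(k,k+1)}(s)$ is nonnegative and nondecreasing in $s$, hence
$$
\sup_{0\le s\le t}\bigl(L_{(k,k+1)}(s)-L^{(N)}_{(k,k+1)}(s)\bigr) = L_{(k,k+1)}(t)-L^{(N)}_{(k,k+1)}(t)\ \longrightarrow\ 0,
$$
i.e.\ pointwise convergence upgrades itself to uniform convergence on $[0,t]$ with no extra input. Continuity of $L_{(k,k+1)}$, and then of $Y_k$ via the SDE identity, now follows as a \emph{consequence} (uniform limits of continuous functions are continuous), and the induction on $k$ proceeds as you set it up. This reverses the logical order of your plan — uniform convergence first, continuity second — and removes any need for Dini's theorem, equicontinuity, or oscillation bounds for reflected processes.
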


The proof of Lemma~\ref{lemma:conv-of-local-times} is also postponed until the end of the proof of Theorem~\ref{thm2}. Assuming we proved this lemma, let us complete the proof of Theorem~\ref{thm2} for the case when $q^+_n \ge 1/2$ for all $n \ge 1$. For $k = 1, 2, \ldots$ and $t \ge 0$, we have:
$$
Y_k^{(N)}(t) = y_k + g_kt + \si_kB_k(t) + q^+_kL_{(k-1, k)}^{(N)}(t) - q^-_kL_{(k, k+1)}^{(N)}(t).
$$
Letting $N \to \infty$, we have:
$$
Y_k(t) = y_k + g_kt + \si_kB_k(t) + q^+_kL_{(k-1, k)}(t) - q^-_kL_{(k, k+1)}(t).
$$
Finally,  let us show that $L_{(k, k+1)}$ and $Y_k$ satisfy the properties (i) - (iii) of Definition~\ref{main}. Some of these properties follow directly from the uniform covergence and the corresponding properties for finite systems $Y^{(N)}$. The nontrivial part is to prove that $L_{(k, k+1)}$ can increase only when $Y_k = Y_{k+1}$. Suppose that for some $k \ge 1$ we have: $Y_k(t) < Y_{k+1}(t)$ for $t \in [\al, \be] \subseteq \BR_+$. By continuity, there exists $\eps > 0$ such that $Y_{k+1}(t) - Y_k(t) \ge \eps$ for $t \in [\al, \be]$. By uniform convergence, there exists an (a.s. finite) $N_0$ such that for $N \ge N_0$ we have:
$$
Y^{(N)}_{k+1}(t) - Y^{(N)}_k(t) \ge \frac{\eps}2,\ \ t \in [\al, \be].
$$
Therefore, $L^{(N)}_{(k, k+1)}$ is constant on $[\al, \be]$: $L^{(N)}_{(k, k+1)}(\al) = L^{(N)}_{(k, k+1)}(\be)$. This is true for all $N \ge N_0$. Letting $N \to \infty$, we get: $L_{(k, k+1)}(\al) = L_{(k, k+1)}(\be)$. Therefore, $L_{(k, k+1)}$ is also constant on $[\al, \be]$.

\medskip

{\bf Step 3.} Now, consider the case when $q^+_n \ge 1/2$ only for $n \ge n_0$. It suffices to show that the sequence $(Y^{(N)}_k(t))_{N \ge k}$ is bounded from below, since this is the crucial part of the proof. For $N \ge n_0+2$, consider the system 
$$
\tilde{Y}^{(N)} = \left(\tilde{Y}^{(N)}_{n_0+1}, \ldots, \tilde{Y}^{(N)}_N\right)'
$$
of $N - n_0$ competing Brownian particles with parameters 
$$
(g_n)_{n_0 < n \le N},\ (\si_n^2)_{n_0 < n \le N},\ (q^{\pm}_n)_{n_0 < n \le N},
$$
starting from $(y_{n_0+1}, \ldots, y_N)'$, with driving Brownian motions $B_{n_0+1}, \ldots, B_N$. By \cite[Corollary 3.9, Remark 8]{MyOwn2}, we have:
\begin{equation}
\label{eq:comp-of-two-systems}
Y^{(N)}_k(t) \ge \tilde{Y}^{(N)}_k(t),\ \ \mbox{for}\ \ n_0 < k \le N\ \ \mbox{and}\ \ t \ge 0.
\end{equation}
But for every $k > n_0$ and $t \in [0, T]$, the sequence $(\tilde{Y}^{(N)}_k(t))_{N > k}$ is bounded below: we proved this earlier in the proof of Theorem~\ref{thm2}, thanks to Lemma~\ref{lemma:finite-limit} and~\eqref{109}.  
Let us show that for every $t \in [0, T]$, the sequence $(Y^{(N)}_1(t))_{N \ge 2}$ is bounded below. Indeed, again applying \cite[Corollary 3.9]{MyOwn2}, we get:
$$
Z^{(n_0+1)}_k(t) \ge Z^{(N)}_k(t),\ \ t \ge 0,\ k = 1, \ldots, n_0,\ N \ge n_0+2.
$$
Note that $(Y^{(N)}_{n_0+1}(t))_{N \ge n_0+2}$ is bounded from below, and $Z^{(n_0+1)}_k(t)$ for $k = 1,\ldots, n_0$ are independent of $N$.
Combining this with 
$$
Y^{(N)}_1(t) = Y_{n_0+1}^{(N)}(t) - Z^{(N)}_{n_0}(t) - \ldots - Z_1^{(N)}(t) \ge Y^{(N)}_{n_0+1}(t) - Z_1^{(n_0+1)}(t) - \ldots - Z_{n_0}^{(n_0+1)}(t),
$$
we get that $(Y^{(N)}_1(t))_{N \ge 2}$ is bounded from below. The rest of the proof is the same as in the case when $q^+_n \ge 1/2$ for all $n = 1, 2, \ldots$

\medskip

{\it Proof of Lemma~\ref{lemma:finite-limit}.} It suffices to show that, as $u \to \infty$, we have:
$$
\sup\limits_{N \ge 2}\MP\left(\min\limits_{0 \le t \le T}\ol{Y}_1^{(N)}(t) < -u\right) \to 0.
$$
The ranked system $\ol{Y}^{(N)}$ has the same law as the result of ranking of a classical system 
$$
X^{(N)} = \left(X^{(N)}_1, \ldots, X^{(N)}_N\right)'
$$
with the same parameters: drift coefficients $(g_n)_{1 \le n \le N}$, diffusion coefficients $(\si_n^2)_{1 \le n \le N}$, starting from $X^{(N)}(0) = (y_1, \ldots, y_N)'$. These components satisfy the following system of SDE:
\begin{equation}
\label{eq:SDE-named}
\md X_i^{(N)}(t) = \SL_{k=1}^N1(X_i^{(N)}\  \mbox{has rank}\  k\  \mbox{at time}\  t)\left(g_k\md t + \si_k\md W_i(t)\right),
\end{equation}
for some i.i.d. standard Brownian motions $W_1, \ldots, W_N$. In particular,
$$
Y_1^{(N)}(t) \equiv \min\limits_{i = 1, \ldots, N}X_i^{(N)}(t).
$$
Therefore,
\begin{equation}
\label{eq:min-min}
\min\limits_{0 \le t \le T}Y_1^{(N)}(t) = \min\limits_{1 \le i \le N}\min\limits_{0 \le t \le T}X_i^{(N)}(t).
\end{equation}
We can rewrite~\eqref{eq:SDE-named} as
$$
X_i^{(N)}(t) = y_i + \int_0^t\be_{N, i}(s)\md s + \int_0^t\rho_{N, i}(s)\md W_i(s),
$$
where
$$
\be_{N, i}(t) := \SL_{k=1}^Ng_k1(X_i^{(N)}\ \mbox{has rank}\ k\ \mbox{at time}\  t),
$$
$$
\rho_{N, i}(t) := \SL_{k=1}^N\si_k1(X_i^{(N)}\ \mbox{has rank}\ k\ \mbox{at time}\  t).
$$
Because of~\eqref{eq:bounds-g-si}, we have the following estimates: $\be_{N, i}(t) \ge \underline{g}$ and $|\rho_{N, i}(t)| \le \ol{\si}$, for $t \ge 0$. Therefore, by Lemma~\ref{lemma:estimate-special} we get: 
$$
\MP\left(\min\limits_{0 \le t \le T}X_i^{(N)}(t) < -u\right) \le 
2\Psi\left(\frac{u+y_i-(\underline{g}T)_-}{\ol{\si}\sqrt{T}}\right).
$$
From~\eqref{eq:min-min}, we have:
\begin{equation}
\label{eq:1342}
\MP\left(\min\limits_{0 \le t \le T}\ol{Y}^{(N)}_1(t) < -u\right) \le 2\SL_{i=1}^N\Psi\left(\frac{u+y_i-(\underline{g}T)_-}{\ol{\si}\sqrt{T}}\right). 
\end{equation}
By Lemma~\ref{lemma:special-series}, we have:
\begin{equation}
\label{eq:1341}
\SL_{N=1}^{\infty}\SL_{i=1}^N\Psi\left(\frac{u+y_i-(\underline{g}T)_-}{\ol{\si}\sqrt{T}}\right) < \infty.
\end{equation}
Comparing~\eqref{eq:1342} and~\eqref{eq:1341}, we get: 
$$
\sup\limits_{N\ge 2}\MP\left(\min\limits_{0 \le t \le T}\ol{Y}_1^{(N)}(t) < -u\right) < \infty.
$$
Let $u \to \infty$. Then 
$$
\frac{y_i + (\underline{g}T)_- + u}{\ol{\si}\sqrt{T}} \to \infty,\ \ \Psi\left(\frac{y_i + (\underline{g}T)_- + u}{\ol{\si}\sqrt{T}}\right) \to 0.
$$
Applying Lebesgue dominated convergence theorem to this series (and using the fact that $\Psi$ is decreasing), we get:
$$
\SL_{i=1}^{\infty}\Psi\left(\frac{u+y_i+(\underline{g}T)_-}{\ol{\si}\sqrt{T}}\right) \to 0\ \ \mbox{as}\ \ u \to \infty.
$$
This completes the proof of Lemma~\ref{lemma:finite-limit}.

\medskip

{\it Proof of Lemma~\ref{lemma:conv-of-local-times}.} Applying \cite[Corollary 3.9]{MyOwn2}, we have: for $0 \le s \le t$ and $1 \le k < N < M$, 
\begin{equation}
\label{812}
L^{(N)}_{(k, k+1)}(t) - L^{(N)}_{(k, k+1)}(s) \le L^{(M)}_{(k, k+1)}(t) - L^{(M)}_{(k, k+1)}(s).
\end{equation}
By construction of these systems, the initial conditions $y_k = Y^{(N)}_k(0),\ N \ge k$, do not depend on $N$. Therefore,
$$
Y^{(N)}_1(t) = y_1 + g_1t + \si_1B_1(t) - q^-_1L_{(1, 2)}^{(N)}(t).
$$
Since $Y^{(N)}_1(t) \to Y_1(t)$ and $q^-_1 > 0$: the sequence $(L^{(N)}_{(1, 2)}(t))_{N \ge 2}$ has a limit 
$$
L_{(1, 2)}(t) := \lim\limits_{N \to \infty}L^{(N)}_{(1, 2)}(t),\ \ \mbox{for every}\ \ t \ge 0.
$$
Letting $M \to \infty$ in~\eqref{812}, we get: for $t \ge s \ge 0$, 
$$
L_{(1, 2)}(t) - L_{(1, 2)}(s) \ge L_{(1, 2)}^{(N)}(t) - L_{(1, 2)}^{(N)}(s).
$$
We can equivalently rewrite this as
\begin{equation}
\label{31871}
L_{(1, 2)}(t) - L_{(1, 2)}^{(N)}(t) \ge L_{(1, 2)}(s) - L_{(1, 2)}^{(N)}(s).
\end{equation}
But we also have: $(L^{(N)}_{(1, 2)}(t))_{N \ge 2}$ is nondecreasing. Therefore, 
\begin{equation}
\label{31872}
L_{(1, 2)}(s) - L_{(1, 2)}^{(N)}(s) \ge 0.
\end{equation}
In addition, we get the following convergence:
\begin{equation}
\label{31873}
L^{(N)}_{(1, 2)}(t) \to L_{(1, 2)}(t)\ \ \mbox{as}\ \ N \to \infty.
\end{equation}
Combining~\eqref{31871}, ~\eqref{31872}, ~\eqref{31873}, we get:
$$
\lim\limits_{N \to \infty}L^{(N)}_{(1, 2)}(s) = L_{(1, 2)}(s)\ \ \mbox{uniformly on every}\ \ [0, t].
$$
Therefore, letting $N \to \infty$ in~\eqref{659}, we get:
$$
Y_1(t) = y_1 + g_1t + \si_1B_1(t) - q^-_1L_{(1, 2)}(t),\ \ t \ge 0,
$$
and $Y_1^{(N)}(s) \to Y_1(s)$ uniformly on every $[0, t]$. Since $Y_1^{(N)}$ and $L_{(1, 2)}^{(N)}$ are continuous for every $N \ge 2$, and the uniform limit of continuous functions is continuous, we conclude that the functions $Y_1$ and $L_{(1, 2)}$ are also continuous. Now, 
$$
Y_2^{(N)}(t) = y_2 + g_2t + \si_2B_2(t) + q^+_2L_{(1, 2)}^{(N)}(t) - q^-_2L_{(2, 3)}^{(N)}(t),\ \ t \ge 0.
$$
But 
$$
Y^{(N)}_2(t) \to Y_2(t)\ \ \mbox{and}\ \ L_{(1, 2)}^{(N)}(t) \to L_{(1, 2)}(t)\ \ \mbox{as}\ \ N \to \infty.
$$
Since $q^-_2 > 0$, we have: there exists a limit $L_{(2, 3)}(t) := \lim_{N \to \infty}L^{(N)}_{(2, 3)}(t)$. Similarly, we prove that this convergence is uniform on every $[0, T]$. Therefore, $\lim_{N \to \infty}Y_2^{(N)} = Y_2$ uniformly on every $[0, T]$. Thus $Y_2$ and $L_{(2, 3)}$ are continuous. Analogously, we can prove that for every $k \ge 1$, the limits 
$$
L_{(k, k+1)}(t) = \lim\limits_{N \to \infty}L_{(k, k+1)}^{(N)}(t)\ \ \mbox{and}\ \ Y_k(t) = \lim\limits_{N \to \infty}Y_k^{(N)}(t)
$$
exist and are uniform on every $[0, T]$. This completes the proof of Lemma~\ref{lemma:conv-of-local-times}, and with it the proof of Theorem~\ref{thm2}. 

\subsection{Proof of Lemma~\ref{aux}} {\bf Step 1.} First, consider the case $q^+_n \ge 1/2$ for {\it all} $n \ge 1$. Take an approximative version $\tilde{Y} = (\tilde{Y}_1, \tilde{Y}_2, \ldots)$ of the infinite classical system with parameters $(g_k)_{k \ge 1}$ and $(\si_k^2)_{k \ge 1}$, with symmetric collisions, and with the same initial conditions. By comparison techniques, Corollary~\ref{comparinfcordrifts} (iii), we have the stochastic domination:
\begin{equation}
\label{eq:101}
Y_k(t) \succeq \tilde{Y}_k(t),\ k = 1, 2, \ldots,\ 0 \le t \le T. 
\end{equation}

{\bf Step 2.} Now, let us prove the two statements for the general case.
 Consider the approximative version $\tilde{Y} = (\tilde{Y}_k)_{k > n_0}$ of the infinite ranked system of competing Brownian particles with parameters $(g_n)_{n > n_0}, (\si_n^2)_{n > n_0}, (q^{\pm}_n)_{n > n_0}$. But $q^+_n \ge 1/2$ for all $n > n_0$, and therefore the system 
$\tilde{Y}$ satisfies the statements of Lemma~\ref{aux}. By comparison techniques for infinite systems, see Corollary~\ref{cor:comp2}, we get:
$$
Y_k(t) \ge \tilde{Y}_k(t),\ \ t \in [0, T],\ \ n_0 < k \le N.
$$
Therefore, the system $(Y_k)_{k \ge 1}$ also satisfies the statements of Lemma~\ref{aux}.

\subsection{Proof of Lemma~\ref{notieinf}}

 Let $D = \{Y(t)\ \mbox{has a tie}\}$. Assume $\omega \in D$, that is, the vector $Y$ has a tie: 
\begin{equation}
\label{longtie}
Y_{k-1}(t) < Y_k(t) = Y_{k+1}(t) = \ldots = Y_l(t) < Y_{l+1}(t).
\end{equation}
This tie cannot contain infinitely many particles, because this would contradict Lemma~\ref{aux}. 
Fix a rational $q \in (Y_l(t), Y_{l+1}(t))$. By continuity of $Y_l$ and $Y_{l+1}$, there exists $M \ge 1$ such that for $s \in [t - 1/M, t + 1/M]$ we have: $Y_l(s) < q < Y_{l+1}(s)$. Let 
\begin{align*}
C(k, l, q, M) =& \biggl\{Y_{k-1}(t) < Y_k(t) = Y_{k+1}(t) = \ldots = Y_l(t) < Y_{l+1}(t),\ \ \\ &\mbox{and}\ \ Y_l(s) < q < Y_{l+1}(s)\ \ \mbox{for all}\ \ s \in \left[t - \frac1M, t + \frac1M\right]\biggr\}.
\end{align*}
We just proved that 
\begin{equation}
\label{bigunion}
\MP\left(D\setminus\bigcup\limits_{M = 1}^{\infty}\bigcup\limits_{q \in \mathbb Q}\bigcup\limits_{k < l}C(k, l, q, M)\right) = 0.
\end{equation}
Now let us show that for every $k, l , M = 1, 2, \ldots$ with $k < l$ and for every $q \in \mathbb Q$, we have: 
\begin{equation}
\label{cap}
P(D\cap C(k, l, q, M)) = 0.
\end{equation}
Since the union in~\eqref{bigunion} is countable, this completes the proof. 
If the event $C(k, l, q, M)$ happened, then we have: $([Y(u + t - 1/M)]_l, 0 \le u \le 1/M)$ behaves as a system of $l$ ranked competing Brownian particles with parameters 
$$
(g_n)_{1 \le n \le l},\ \  (\si_n^2)_{1 \le n \le l},\ \  (q^{\pm}_n)_{1 \le n \le l}.
$$
By Lemma~\ref{notie}, the probability of a tie at $t = 1/M$ for the system $([Y(u + t - 1/M)]_l, 0 \le u \le 1/M)$ of $l$ competing Brownianb particles is zero, which proves~\eqref{cap}.  

\subsection{Proof of Theorem~\ref{convnames}} 
Let $\mP_t^{(N)}$ be the ranking permutation for the vector $X^{(N)}(t) \in \BR^N$. Then for $1 \le i \le N$ we have: 
\begin{equation}
\label{34332}
X^{(N)}_i(t) = x_i + \int_0^t\be_{N, i}(s)\md s + \int_0^t\rho_{N, i}(s)\md W_{N, i}(s),\ \ t \ge 0,
\end{equation}
where $W_{N, 1}, \ldots, W_{N, N}$ are i.i.d. standard Brownian motions, 
$$
\be_{N, i}(t) = \SL_{k=1}^N1(\mP_t^{(N)}(k) = i)g_k,\ \ \mbox{and}\ \ 
\rho_{N, i}(s) = \SL_{k=1}^N1(\mP_t^{(N)}(k) = i)\si_k.
$$
Note that 
$$
\bigl|\be_{N, i}(t)\bigr| \le \max\limits_{k \ge 1}|g_k| =: \ol{g},
$$
and 
$$
\bigl|\rho_{N, i}(t)\bigr| \le \max\limits_{k \ge 1}\si_k =: \ol{\si}.
$$
Fix $T > 0$. It follows from the Arzela-Ascoli criterion and Lemma~\ref{lemma:tight-martingale} that the sequence $(X^{(N)}_i)_{N \ge i}$ is tight in $C[0, T]$.  Now, let us show that the following sequence is also tight in $C\left([0, T], \BR^{3k}\right)$, for each $k \ge 1$: 
\begin{equation}
\label{eq:subsequence}
(X_i^{(N)}, Y_i^{(N)}, W_{N, i}, i = 1, \ldots, k)_{N \ge k}.
\end{equation}
For the components $Y_i^{(N)}$, this follows from Theorem~\ref{thm3}: as $N \to \infty$, $Y_i^{(N)} \Ra Y_i$, where $Y = (Y_i)_{i \ge 1}$ is an approximative version of the infinite system of competing Brownian particles with parameters $(g_n)_{n \ge 1}$, $(\si_n^2)_{n \ge 1}$, $(\ol{q}^{\pm}_n = 1/2)_{n \ge 1}$. For the components $W_{N, i}$, this is immediate, because all these elements have the same law in $C([0, T], \BR^d)$ (the law of the $d$-dimensional Brownian motion starting from the origin). By the diagonal argument, for every subsequence $(N_m)_{m \ge 1}$ there exists a sub-subsequence $(N'_m)_{m \ge 1}$ such that for every $k \ge 1$, the following subsequence of~\eqref{eq:subsequence}
$$
(X_1^{(N'_m)}, \ldots, X_k^{(N'_m)}, Y_1^{(N'_m)}, \ldots, Y_k^{(N'_m)}, W_{N'_m, 1}, \ldots, W_{N'_m, k})_{m \ge 1}
$$
converges weakly in $C\left([0, T], \BR^{3k}\right)$. By Skorohod theorem, we can assume that the convergence is, in fact, a.s. Let 
$$
X_i := \lim\limits_{m \to \infty}X_i^{(N'_m)},\ \ Y_i := \lim\limits_{m \to \infty}Y_i^{(N'_m)},\ \ W_i := \lim\limits_{m \to \infty}W_{N'_m, i},\ \ i \ge 1
$$
be the a.s. uniform limit on $[0, T]$. As mentioned earlier, $Y = (Y_i)_{i \ge 1}$ is an approximative version of the infinite system of competing Brownian particles with parameters $(g_n)_{n \ge 1}$, $(\si_n^2)_{n \ge 1}$, $(\ol{q}^{\pm}_n = 1/2)_{n \ge 1}$. Also, $W_i$ are i.i.d. standard Brownian motions. 

\medskip

Next, it suffices to show that $X$ is a version of the infinite classical system, because the subsequence $(N_m)_{m \ge 1}$ is arbitrary, and the tightness is established above. Take the (random) set $\CN(\oa)$ of times $t \in [0, T]$ when the system $Y$ or a system $Y^{(N'_m)}$ for some $m \ge 1$ has a tie. By Lemmata~\ref{notieinf} and~\ref{notie}, there exists a set $\Oa_* \subseteq \Oa$ of measure $\MP(\Oa_*) = 1$ such that for all $\oa \in \Oa_*$, the set $\CN(\oa)$ has Lebesgue measure zero. Therefore, for every $\eps > 0$ and every $\oa \in \Oa_*$, there exists an open subset $\CU_{\eps}(\oa) \subseteq [0, T]$ with measure $\mes(\CU_{\eps}(\oa)) < \eps$ such that $\CN(\oa) \subseteq \CU_{\eps}(\oa)$. 

\begin{lemma}
\label{lemma:incl} Fix $i \ge 1$. Then for every $\oa \in \Oa_*$, there exists an $m_0(\oa)$ such that for $m \ge m_0(\oa)$ and $k \ge 1$, 
$$
\left\{t \in [0, T]\setminus\CU_{\eps}(\oa)\mid X_i(t) = Y_k(t)\right\} \subseteq \left\{t \in [0, T]\setminus\CU_{\eps}(\oa)\mid X^{(N'_m)}_i(t) = Y^{(N'_m)}_k(t)\right\}.
$$
\end{lemma}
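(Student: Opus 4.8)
The plan is to fix a sample point in the almost-sure event on which the Skorohod coupling gives uniform convergence on $[0,T]$ of $X_i^{(N'_m)} \to X_i$ and of each ranked particle $Y_j^{(N'_m)} \to Y_j$, on which $\mes(\CN) = 0$, and on which the limit system $Y$ is locally finite (Lemma~\ref{aux}). First I would reduce the assertion to finitely many ranks. Since $X_i$ is continuous on the compact interval $[0,T]$, it is bounded there, say $X_i(t) \le C_i < \infty$. If $X_i(t) = Y_k(t)$ for some $t \in [0,T]$, then $\min_{[0,T]}Y_k \le C_i$, and by local finiteness of $Y$ only finitely many indices $k$ satisfy this inequality; call this finite (random) set $K_i$. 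For $k \notin K_i$ the set on the left-hand side of the claimed inclusion is empty, so the inclusion is trivial. It therefore suffices to produce, for each of the finitely many $k \in K_i$, an index $m_0(k)$ that works, and then set $m_0 := \max_{k \in K_i}m_0(k)$.

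Fix $k \in K_i$ and consider $S_k := \{t \in [0,T]\setminus\CU_\eps \mid X_i(t) = Y_k(t)\}$. Because $X_i$ and $Y_k$ are continuous and $\CU_\eps$ is open, $S_k$ is closed, hence compact. Since $\CN \subseteq \CU_\eps$, every $t \in S_k$ lies outside $\CN$, so $Y$ has no tie at $t$ and its ranked particles are strictly separated: $Y_{k-1}(t) < Y_k(t) < Y_{k+1}(t)$, with the convention $Y_0 \equiv -\infty$. The gaps $t \mapsto Y_k(t) - Y_{k-1}(t)$ and $t \mapsto Y_{k+1}(t) - Y_k(t)$ are continuous and strictly positive on the compact set $S_k$, so they admit a common positive lower bound $\delta = \delta(k) > 0$. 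By the uniform convergence on $[0,T]$ of the four processes $X_i^{(N'_m)}, Y_{k-1}^{(N'_m)}, Y_k^{(N'_m)}, Y_{k+1}^{(N'_m)}$ to their limits, I would choose $m_0(k)$ so large that $N'_m \ge k+1$ and each of these four processes stays within $\delta/3$ of its limit, uniformly on $[0,T]$, for all $m \ge m_0(k)$.

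It then remains to run the squeeze. Fix $m \ge m_0(k)$ and $t \in S_k$, and write $y := Y_k(t) = X_i(t)$. From the chosen bounds, $Y_{k-1}^{(N'_m)}(t) < Y_{k-1}(t) + \delta/3 \le y - \delta + \delta/3 < y - \delta/3 < X_i^{(N'_m)}(t)$ and, symmetrically, $X_i^{(N'_m)}(t) < y + \delta/3 < y + \delta - \delta/3 < Y_{k+1}^{(N'_m)}(t)$. Since $t \notin \CN$, the finite system $Y^{(N'_m)}$ also has no tie at $t$, so its ranked particles are strictly ordered and $X_i^{(N'_m)}(t)$ coincides with exactly one of them, say $Y_j^{(N'_m)}(t)$; the two displayed chains force $Y_{k-1}^{(N'_m)}(t) < Y_j^{(N'_m)}(t) < Y_{k+1}^{(N'_m)}(t)$, hence $j = k$. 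Thus $X_i^{(N'_m)}(t) = Y_k^{(N'_m)}(t)$, which is exactly the claimed inclusion.

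The main obstacle to anticipate is uniformity. A naive pointwise argument would have to rule out, uniformly in $m$, that one of the infinitely many named particles with large index descends below $X_i^{(N'_m)}$ near the level $y$. The device that sidesteps this entirely is to argue through the ordered ranked particles, so that only the three neighbouring ranks $k-1,k,k+1$ — hence uniform convergence of finitely many processes — are ever invoked. The two remaining delicate points are the reduction to the finite index set $K_i$, where local finiteness of the limit system is essential, and the extraction of the uniform gap $\delta$, which relies on $S_k$ being compact and disjoint from the tie set $\CN$, itself guaranteed by $\CN \subseteq \CU_\eps$.
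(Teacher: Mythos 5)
Your proof is correct, but it runs in the opposite direction from the paper's. The paper argues by contradiction: assuming the inclusion fails along a sequence $m_j \to \infty$, it picks times $t_j \in [0,T]\setminus\CU_{\eps}$ with $X_i(t_j) = Y_k(t_j)$ but $X_i^{(N'_{m_j})}(t_j) \ne Y_k^{(N'_{m_j})}(t_j)$, so that $X_i^{(N'_{m_j})}(t_j)$ lands on the wrong side of $Y_{k+1}^{(N'_{m_j})}(t_j)$ or of $Y_{k-1}^{(N'_{m_j})}(t_j)$; it then extracts a convergent subsequence $t_j \to t_0$, uses the principle that uniform convergence together with $t_j \to t_0$ yields $f_{m_j}(t_j) \to f(t_0)$, and concludes that $Y$ has a tie at $t_0$, which lies in the closed set $[0,T]\setminus\CU_{\eps}$ --- contradicting $\CN \subseteq \CU_{\eps}$. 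You instead argue directly: compactness of $S_k$ and the absence of ties off $\CU_{\eps}$ produce a uniform gap $\delta > 0$, and a $\delta/3$-squeeze pins the rank of the named particle $i$ to exactly $k$. The two arguments rest on the same ingredients (a.s.\ uniform convergence on $[0,T]$, closedness of $[0,T]\setminus\CU_{\eps}$, no ties there, and monotonicity of ranked particles in the rank index), and are essentially contrapositives of one another; yours is quantitative where the paper's is a soft compactness argument, at the cost of the extra bookkeeping with $\delta$. One genuine advantage on your side: the lemma asserts a \emph{single} $m_0$ valid for all $k \ge 1$ simultaneously, and your reduction to the finite set $K_i$ of ranks, via boundedness of $X_i$ on $[0,T]$ and local finiteness of the limit system (Lemma~\ref{aux}), is precisely what justifies this uniformity. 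The paper's negation tacitly fixes $k$ and never explains why $m_0$ can be chosen independent of $k$; your argument closes that small gap, and the paper's proof could be repaired in exactly the same way.
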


\begin{proof} Assume the converse. Then there exists a sequence $(t_j)_{j \ge 1}$ in $[0, T] \subseteq \CU_{\eps}(\oa)$ and a sequence $(m_j)_{j \ge 1}$ such that $m_j \to \infty$ and 
$$
X_i(t_j) = Y_k(t_j),\ \ X^{(N'_{m_j})}_i(t_j) \ne Y^{(N'_{m_j})}_k(t_j).
$$
Therefore, the particle with name $i$ in the system $X^{(N'_{m_j})}$ has rank other than $k$: either larger than $k$, in which case we have:
\begin{equation}
\label{eq:ineq65}
X^{(N'_{m_j})}_i(t_j) \ge Y^{(N'_{m_j})}_{k+1}(t_j),
\end{equation}
or smaller than $k$, in which case
\begin{equation}
\label{eq:ineq56}
X^{(N'_{m_j})}_i(t_j) \le Y^{(N'_{m_j})}_{k-1}(t_j).
\end{equation}
By the pigeonhole principle, at least one of these inequalities is true for infinitely many $j$. Without loss of generality, we can assume that~\eqref{eq:ineq65} holds for infinitely many $j \ge 1$; the case when~\eqref{eq:ineq56} holds for infinitely many $j \ge 1$ is similar. Again, without loss of generality we can assume~\eqref{eq:ineq65} holds for all $j \ge 1$. There exists a convergent subsequence of $(t_j)_{j \ge 1}$, because $[0, T]$ is compact. Without loss of generality, we can assume $t_j \to t_0$. We shall use the principle: if $f_n \to f_0$ uniformly on $[0, T]$ and $s_n \to s_0$, then $f_n(s_n) \to f_0(s_0)$. Since
$$
X^{(N'_{m_j})}_i(t_j) \to X_i(t_0)\ \ \mbox{and}\ \ Y^{(N'_{m_j})}_{k+1}(t_j) \to Y_{k+1}(t_0)
$$
uniformly on $[0, T]$, we have after letting $j \to \infty$: $X_i(t_0) \ge Y_{k+1}(t_0)$. 
But we can also let $j \to \infty$ in $X_i(t_j) = Y_k(t_j)$. We get: $X_i(t_0) = Y_k(t_0)$. 
Thus, $Y_{k+1}(t_0) \le Y_k(t_0)$. The reverse inequality always holds true. Therefore, there is a tie at the point $t_0$. But the set $[0, T] \setminus \CU_{\eps}$ is closed; therefore, $t_0 \in [0, T] \setminus \CU_{\eps}$. This contradiction completes the proof. 
\end{proof}

\begin{lemma} For $\oa \in \Oa_*$, $t \in [0, T]\setminus\CN(\oa)$, and $i \ge 1$, as $m \to \infty$, we have:
$$
\be_{N'_m, i}(t) \to \be_i(t) := \SL_{k=1}^{\infty}1(Y_k(t) = X_i(t))g_k,\ \ \mbox{and}\ \ \rho_{N'_m, i}(t) \to \rho_i(t) := \SL_{k=1}^{\infty}1(Y_k(t) = X_i(t))\si_k.
$$
\label{lemma:aux-aux-0}
\end{lemma}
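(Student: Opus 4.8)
The plan is to reduce the claimed convergence to a statement about ranks. Observe that for a finite system the ranking permutation $\mP^{(N)}_t$ assigns to the named particle $i$ a single rank $r = r^{(N)}_i(t)$, so that $\be_{N,i}(t) = g_r$ and $\rho_{N,i}(t) = \si_r$; there is never any ambiguity here, since ties are resolved lexicographically. For the infinite limit, at any time $t$ with no tie the index $k$ with $Y_k(t) = X_i(t)$ is unique and equals the rank $r$ of the named particle $i$ in the infinite system, so that $\be_i(t) = g_r$ and $\rho_i(t) = \si_r$. Thus, up to a Lebesgue-null set of tie-times, the entire lemma amounts to showing that, for a.e.\ $t$, the rank of named particle $i$ in $X^{(N'_m)}$ eventually (in $m$) coincides with its rank in the infinite system $X$.

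First I would fix $i \ge 1$ and work on the almost sure event on which all the uniform convergences $X^{(N'_m)}_j \to X_j$ and $Y^{(N'_m)}_k \to Y_k$ on $[0,T]$ hold and on which the random set $\CN$ of tie-times (for $Y$ and for every $Y^{(N'_m)}$) has Lebesgue measure zero; both are available from the construction in the proof of Theorem~\ref{convnames}. Fixing $\eps > 0$, I invoke Lemma~\ref{lemma:incl} to produce the random index $m_0$ and the open set $\CU_\eps \supseteq \CN$ with $\mes(\CU_\eps) < \eps$. For any $t \in [0,T]\setminus\CU_\eps$ there are no ties at time $t$, in the infinite system or in any $Y^{(N'_m)}$; let $r$ be the unique rank of named particle $i$ in the infinite system, so $X_i(t) = Y_r(t)$ and $\be_i(t) = g_r$, $\rho_i(t) = \si_r$. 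Lemma~\ref{lemma:incl} then gives $X^{(N'_m)}_i(t) = Y^{(N'_m)}_r(t)$ for all $m \ge m_0$, and the absence of ties in the finite system promotes this equality to a rank identity: named particle $i$ has rank exactly $r$ in $X^{(N'_m)}$ (using also $i, r \le N'_m$ for large $m$). Hence $\be_{N'_m,i}(t) = g_r = \be_i(t)$ and $\rho_{N'_m,i}(t) = \si_r = \rho_i(t)$ for all $m \ge m_0$; in particular the convergence holds, in fact as eventual equality, at every such $t$.

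To finish, I would let $\eps = 1/n \downarrow 0$: the convergence then holds on $\bigcup_{n \ge 1}\bigl([0,T]\setminus\CU_{1/n}\bigr)$, which has full Lebesgue measure in $[0,T]$, and then let $T \to \infty$ and intersect over the countably many $i \ge 1$. This yields an almost sure event on which, for Lebesgue-a.e.\ $t > 0$, the stated convergence holds for every $i$. A final application of Tonelli's theorem to the indicator of the bad set in $(t,\omega)$ interchanges the quantifiers, turning ``almost surely, for a.e.\ $t$'' into the desired ``for a.e.\ $t$, almost surely.''

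The main obstacle is the rank-coincidence step, and it is worth emphasising that Lemma~\ref{lemma:incl} does all the real work: it controls how the finite-system configurations sit relative to the limit precisely on the complement of the small set $\CU_\eps$. The one-sided inclusion in that lemma is enough here only because the no-tie property upgrades the coincidence $X^{(N'_m)}_i = Y^{(N'_m)}_r$ into an identity of ranks; without excluding ties one could not read off the drift and diffusion coefficients from such an equality. Everything after the rank identity is routine measure-theoretic bookkeeping: letting $\eps \to 0$, handling the countable family of indices, and the Tonelli interchange.
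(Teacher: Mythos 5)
Your proposal is correct and follows essentially the same route as the paper's own proof: invoke Lemma~\ref{lemma:incl} together with the exclusion of tie-times via $\CU_{\eps} \supseteq \CN$ to upgrade the positional coincidence $X^{(N'_m)}_i(t) = Y^{(N'_m)}_r(t)$ to an identity of ranks, deduce that $\be_{N'_m,i}(t) = \be_i(t)$ and $\rho_{N'_m,i}(t) = \rho_i(t)$ for $m$ large and $t \in [0,T]\setminus\CU_{\eps}$, and then let $\eps \to 0$. The paper states this more tersely (leaving the rank-coincidence and the final quantifier bookkeeping implicit), so your write-up is simply a fuller version of the same argument.
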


\begin{proof} Let us prove the first convergence statement; the second statement is proved similarly. 
By Lemma~\ref{lemma:incl}, we have: 
$$
\be_{N'_m, i}(t) = \be_i(t)\ \ \mbox{and}\ \ \rho_{N'_m, i}(t) = \rho_i(t),\ \ t \in [0, T]\setminus\CU_{\eps},\ \ m > m_0.
$$
This proves that 
$$
\be_{N'_m, i}(t) \to \be_i(t)\ \ \mbox{and}\ \ \rho_{N'_m, i}(t) \to \rho_i(t)\ \ \mbox{for}\ \ t \in [0, T]\setminus\CU_{\eps}\ \ \mbox{as}\ \ m \to \infty.
$$
Since the set $\mes(\CU_{\eps}) < \eps$ and $\eps$ is arbitrarily small, this proves Lemma~\ref{lemma:aux-aux-0}. 
\end{proof}

Now, let us return to the proof of Theorem~\ref{convnames}. Fix $t \in [0, T]$. Apply \cite[Lemma 7.1]{MyOwn8} to show that in $L^2(\Oa, \CF, \MP)$, we have:
\begin{equation}
\label{eq:!}
\int_0^t\rho_{N'_m, i}(s)\md W_{N'_m, i}(s) \to \int_0^t\rho_i(s)\md W_i(s).
\end{equation}
Also, by Lebesgue dominated convergence theorem (because $\mes(\CN(\oa)) = 0$ for $\oa \in \Oa_*$), 
\begin{equation}
\label{eq:conv20}
\int_0^t\be_{N'_m, i}(s)\md s \to \int_0^t\be_i(s)\md s\ \ \mbox{a.s. for all}\ \ t \in [0, T].
\end{equation}
Finally, we have a.s. 
\begin{equation}
\label{eq:conv-of-ito0}
X_i^{(N'_m)}(t) = x_i + \int_0^t\be_{N'_m, i}(s)\md s + \int_0^t\rho_{N'_m, i}(s)\md W_{N'_m, i}(s) \to X_i(t). 
\end{equation}
From~\eqref{eq:conv-of-ito0} and~\eqref{eq:conv20} we have that 
\begin{equation}
\label{eq:!!}
\int_0^t\rho_{N'_m, i}(s)\md W_{N'_m, i}(s) \to X_i(t) - x_i - \int_0^t\be_i(s)\md s.
\end{equation}
But if a sequence of random variables converges to one limit in $L^2$ and to another limit a.s., then there limits coincide a.s. Comparing~\eqref{eq:!} and~\eqref{eq:!!}, we get: 
$$
X_i(t) = x_i + \int_0^t\be_i(s)\md s + \int_0^t\rho_i(s)\md W_i(s),
$$
which is another way to write the SDE governing the infinite classical system. We have found a sequence $(N'_m)_{m \ge 1}$ which corresponds to convergence on $[0, T]$. By taking a sequence $T_j \to \infty$ and using the standard diagonal argument, we can finish the proof.

\subsection{Proof of Lemma~\ref{lemma:independent-sequence}.} Because of symmetry of $\pi$ and $\tilde{\pi}$, it suffices to show that $\pi \preceq \tilde{\pi}$. Next, it suffices to show that for every fixed $M \ge 1$ we have: 
\begin{equation}
\label{eq:777comp}
[\pi]_M \preceq [\tilde{\pi}]_M.
\end{equation}
Recall that we have the following weak convergence:
$$
[\pi^{(\tilde{N}_j)}]_M \Ra [\tilde{\pi}]_M,\ \ j \to \infty,
$$
and the stochastic comparison is preserved under weak limits. Therefore, to show~\eqref{eq:777comp}, it suffices to prove that 
\begin{equation}
\label{eq:1}
[\pi]_M \preceq [\pi^{(\tilde{N}_j)}]_M.
\end{equation}
Now, take $J$ large enough so that $N_J > \tilde{N}_j$. By \cite[Corollary 3.14]{MyOwn2}, we have:
\begin{equation}
\label{eq:2}
[\pi^{(\tilde{N}_j)}]_M \succeq [\pi^{(N_J)}]_M.
\end{equation}
By construction of $\pi$, we get:
\begin{equation}
\label{eq:3}
[\pi]_M \preceq [\pi^{(N_J)}]_M.
\end{equation}
From~\eqref{eq:2} and~\eqref{eq:3}, we get~\eqref{eq:1}. 


\subsection{Proof of Theorem~\ref{thm3}} Using the notation of Theorem~\ref{thm2}, we have: 
$$
Y^{(N_j)}_k \to Y_k,\ \ j \to \infty,
$$
for every $k \ge 1$, uniformly on every $[0, T]$. Now, let 
$$
\ol{Y}^{(N_j)} = \left(\ol{Y}^{(N_j)}_1, \ldots, \ol{Y}^{(N_j)}_{N_j}\right)'
$$
be the ranked system of $N_j$ competing Brownian particles, which has the same parameters and driving Brownian motions as 
$$
Y^{(N_j)} = \left(Y^{(N_j)}_1, \ldots, Y^{(N_j)}_{N_j}\right)',
$$
but starts from 
$$
(0, z_1^{(N_j)}, z_1^{(N_j)}+z_2^{(N_j)}, \ldots, z_1^{(N_j)}+z_2^{(N_j)}+ \ldots + z_{N_j-1}^{(N_j)})',
$$
rather than $(0, z_1, z_1 + z_2, \ldots, z_1 + z_2 + \ldots + z_{N_j-1})'$. In other words, the gap process $\ol{Z}^{(N_j)}$ of the system $\ol{Y}^{(N_j)}$ is in its stationary regime: $\ol{Z}^{(N_j)}(t) \sim \pi^{(N_j)}$, $t \ge 0$. Now, let us state an auxillary lemma; its proof is postponed until the end of the proof of Theorem~\ref{thm3}. 

\begin{lemma}
\label{lemma:conv4} Almost surely, as $j \to \infty$, for all $t \ge 0$ and $k \ge 1$, we have:
\begin{equation}
\label{conv4}
Y_k(t) = \lim\limits_{j \to \infty}\ol{Y}^{(N_j)}_k(t).
\end{equation}
\end{lemma}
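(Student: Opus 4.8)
The plan is to prove the pathwise convergence by squeezing $\ol{Y}^{(N_j)}_k(t)$ between two quantities that both converge to $Y_k(t)$, using only the comparison techniques and the full-sequence convergence $Y^{(N)}_k \to Y_k$ established in Theorem~\ref{thm2} (Lemma~\ref{lemma:conv-of-local-times}). Recall that the initial gaps were coupled so that $z^{(N_j)}_k \downarrow z_k$ as $j \to \infty$; in particular $z^{(N_j)}_k \ge z_k$ for every $j$ and $k$. Fix $T > 0$ and $k \ge 1$, and work on the a.s. event where all limits from Theorem~\ref{thm2} hold uniformly on $[0,T]$.

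\emph{Lower bound.} The systems $\ol{Y}^{(N_j)}$ and $Y^{(N_j)}$ have the same number $N_j$ of particles, the same parameters, and the same driving Brownian motions, differing only in their initial gaps $z^{(N_j)} \ge z$, with both bottom particles starting at $0$. Hence $\ol{Y}^{(N_j)}(0) \ge Y^{(N_j)}(0)$ componentwise, and by the comparison for finite systems \cite[Corollary 3.9]{MyOwn2} (the pathwise analogue of Corollary~\ref{comparinfcor}(i)) we get $\ol{Y}^{(N_j)}_k(t) \ge Y^{(N_j)}_k(t)$ for all $t \ge 0$. Letting $j \to \infty$ and using $Y^{(N_j)}_k(t) \to Y_k(t)$ yields $\liminf_j \ol{Y}^{(N_j)}_k(t) \ge Y_k(t)$.

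\emph{Upper bound (the crux).} Here I introduce an intermediate finite system and take a double limit. Fix $m > k$. For every $j$ with $N_j \ge m$, let $\check{Y}^{(j,m)}$ be the ranked system of exactly $m$ competing Brownian particles, with parameters $(g_n,\si_n^2,q^{\pm}_n)_{1 \le n \le m}$, driving Brownian motions $B_1,\ldots,B_m$, and initial gaps $(z^{(N_j)}_1,\ldots,z^{(N_j)}_{m-1})$, i.e.\ the bottom $m$ particles of $\ol{Y}^{(N_j)}$ at time $0$. Since $\ol{Y}^{(N_j)}$ is obtained from $\check{Y}^{(j,m)}$ by adding particles above while keeping the bottom $m-1$ gaps unchanged, the ``more particles push ranked particles down'' comparison \cite[Corollary 3.9]{MyOwn2} (the inequality used already in~\eqref{659}) gives $\ol{Y}^{(N_j)}_k(t) \le \check{Y}^{(j,m)}_k(t)$. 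Now send $j \to \infty$ with $m$ fixed: the systems $\check{Y}^{(j,m)}$ all have the same fixed number $m$ of particles and the same driving Brownian motions, and their initial gaps converge, $z^{(N_j)}_i \to z_i$ for $i < m$. By continuous dependence of a finite system on its initial condition, $\check{Y}^{(j,m)}_k(t) \to Y^{(m)}_k(t)$, where $Y^{(m)}$ is the $m$-particle system with initial gaps $(z_1,\ldots,z_{m-1})$ and Brownian motions $B_1,\ldots,B_m$. Hence $\limsup_j \ol{Y}^{(N_j)}_k(t) \le Y^{(m)}_k(t)$. Finally let $m \to \infty$: the family $(Y^{(m)})_{m}$ is exactly the finite approximation of $Y$ from Theorem~\ref{thm2}, so $Y^{(m)}_k(t) \to Y_k(t)$. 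Combining the two bounds gives $\lim_j \ol{Y}^{(N_j)}_k(t) = Y_k(t)$, which is~\eqref{conv4}.

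The main obstacle is the middle step of the upper bound: the continuous dependence of the finite $m$-particle system on its initial data. This does \emph{not} follow from the comparison inequalities alone, which yield only that $\check{Y}^{(j,m)}_k$ is nonincreasing in $j$ and bounded below by $Y^{(m)}_k$; to identify the monotone limit with the correct value $Y^{(m)}_k(t)$ one must invoke well-posedness of finite systems, e.g.\ the Lipschitz property of the Skorokhod map associated with $R$ under the completely-$\mathcal{S}$ condition, or pathwise uniqueness from \cite{KPS2012}. A secondary point is uniformity in $t \in [0,T]$, which is automatic since every convergence invoked (from Theorem~\ref{thm2} and from the finite-system continuity) is uniform on compact time intervals; the statement for all $t \ge 0$ then follows by letting $T \to \infty$ along a countable sequence.
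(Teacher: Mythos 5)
Your proof is correct, and its skeleton matches the paper's: the lower bound $\ol{Y}^{(N_j)}_k(t) \ge Y^{(N_j)}_k(t) \ge Y_k(t)$ comes from comparing same-size systems with ordered initial conditions, and the upper bound comes from truncating the stationary-start system to a smaller system (removing top particles raises the remaining ones) and comparing the truncation with a system started from the limiting gaps $z$. Where you genuinely diverge from the paper is in how the discrepancy between the gaps $z^{(N_j)}$ and their limit $z$ is absorbed. You invoke continuous dependence of a finite system on its initial condition to get $\check{Y}^{(j,m)} \to Y^{(m)}$ as $j \to \infty$ and then let $m \to \infty$. The paper avoids any continuity statement: it fixes $\eps > 0$, takes $l$ so large that the partial sums of $z^{(N_l)}$ exceed those of $z$ by at most $\eps$, and compares the truncated system $\check{Y}$ not with $Y^{(N_j)}$ itself but with the translate $Y^{(N_j)} + \eps\mathbf{1}_{N_j}$, which is again a system of competing Brownian particles with the same parameters and driving Brownian motions; \cite[Corollary 3.11(i)]{MyOwn2} then yields $\check{Y}_k(t) \le Y^{(N_j)}_k(t) + \eps$, hence $\ol{Y}^{(N_l)}_k(t) \le Y_k(t) + 2\eps$ for large $l$. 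This $\eps$-shift trick keeps the entire argument inside the comparison calculus already imported from \cite{MyOwn2}, which is what makes the paper's proof self-contained; your route buys a cleaner double limit with no $\eps$ bookkeeping, at the price of an extra well-posedness input. That input is in fact available for this model, but for a reason different from the one you give: the reflection matrix $R$ of \eqref{R} is a nonsingular M-matrix of Harrison--Reiman type, as established in \cite[Section 2.1]{KPS2012}, and for that class the Skorokhod map is Lipschitz on path space, which gives pathwise (even uniform on $[0,T]$) continuity in the initial condition. Attributing the Lipschitz property to the completely-$\mathcal{S}$ condition is inaccurate: completely-$\mathcal{S}$ guarantees existence of an SRBM but neither uniqueness of the Skorokhod problem nor Lipschitz continuity of its solution map. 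Also, a minor citation slip: the same-size, ordered-initial-condition comparison you use in the lower bound is \cite[Corollary 3.11(i)]{MyOwn2}, not Corollary 3.9, which is the add/remove-particles statement. Neither point invalidates your argument once corrected.
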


Assuming that we have already shown Lemma~\ref{lemma:conv4}, we can finish the proof. For every $t \ge 0$ and $k = 1, 2, \ldots$, a.s. 
$$
\ol{Z}^{(N_j)}_k(t) = \ol{Y}^{(N_j)}_{k+1}(t) - \ol{Y}^{(N_j)}_k(t)\to Z_k(t) = Y_{k+1}(t) - Y_k(t),\ \ j \to \infty. 
$$
Therefore, for every $t \ge 0$ and $M \ge 1$, a.s. we have:
$$
\left(\ol{Z}^{(N_j)}_1(t), \ldots, \ol{Z}^{(N_j)}_{M}(t)\right)' \to \left(Z_1(t), \ldots, Z_{M}(t)\right)',\ \ j \to \infty.
$$
But 
$$
\ol{Z}^{(N_j)}(t) = \left(\ol{Z}^{(N_j)}_1(t), \ldots, \ol{Z}^{(N_j)}_{N_j-1}(t)\right)' \sim \pi^{(N_j)}
$$
for $j \ge 1$ and $t \ge 0$. Moreover, as $j \to \infty$, we have the following weak convergence:
$$
[\pi^{(N_j)}]_{M}\ \Ra\ [\pi]_{M}.
$$
Therefore, for $M \ge 1$, $t \ge 0$, we get:
$$
\left(Z_1(t), \ldots, Z_{M}(t)\right)' \sim [\pi]_{M}.
$$
Thus, for $Z(t) := (Z_1(t), Z_2(t), \ldots)$, we have: 
$$
Z(t) \sim \pi,\ \ t \ge 0.
$$

\medskip

{\it Proof of Lemma~\ref{lemma:conv4}.} First, since $z_1 \le z_1^{(N_j)}, \ldots, z_{N_j-1} \le z_{N_j-1}^{(N_j)}$, 
we have: 
\begin{align*}
Y^{(N_j)}(0) =& (0, z_1, z_1+z_2, \ldots, z_1+z_2+\ldots + z_{N_j-1})'  \\ & \le \ol{Y}^{(N_j)}(0) = (0, z_1^{(N_j)}, z_1^{(N_j)}+z_2^{(N_j)}, \ldots, z_1^{(N_j)}+z_2^{(N_j)} + \ldots + z_{N_j-1}^{(N_j)})'.
\end{align*}
By \cite[Corollary 3.11(i)]{MyOwn2}, 
\begin{equation}
\label{65676}
Y^{(N_j)}_k(t) \le \ol{Y}^{(N_j)}_k(t),\ t \ge 0,\ j \ge 1.
\end{equation}
As shown in the proof of Theorem~\ref{thm2}, 
\begin{equation}
\label{65677}
Y^{(N_j)}_k(t) \ge Y_k(t),\ \ k = 1, \ldots, N_j,\ t \ge 0.
\end{equation}
Combining~\eqref{65676} and~\eqref{65677}, we get:
\begin{equation}
\label{751}
Y_k(t) \le \ol{Y}^{(N_j)}_k(t),\ \ k = 1, \ldots, N_j,\ \ t \ge 0.
\end{equation}
On the other hand, fix $\eps > 0$ and $j \ge 1$. Then 
$\lim\limits_{l \to \infty}z^{(N_l)}_k = z_k$, for $k = 1, \ldots, N_j-1$. 
There exists an $l_0(j, \eps)$ such that for $l > l_0(j, \eps)$ and $k = 1, \ldots, N_j-1$, 
$$
\label{750}
z_1^{(N_l)}+\ldots + z_{k}^{(N_l)} \le z_1 + \ldots + z_{k} + \eps.
$$
For such $l$, let $\check{Y} = (\check{Y}_1, \ldots, \check{Y}_{N_j})'$, be another system of $N_j$ competing Brownian particles, with the same parameters and driving Brownian motions, as $Y^{(N_j)}$, but starting from 
$(0, z_1^{(N_l)}, z_1^{(N_l)}+z_2^{(N_l)}, \ldots, z_1^{(N_l)}+z_2^{(N_l)}+\ldots + z_{N_j-1}^{(N_l)})'$. 
By \cite[Corollary 3.9]{MyOwn2}, 
\begin{equation}
\label{7961}
\check{Y}_k(t) \ge \ol{Y}^{(N_l)}_k(t),\ \ k = 1, \ldots, N_j,\ \ t \ge 0,
\end{equation}
since $\check{Y}$ is obtained from $\ol{Y}^{(N_l)}$ by removing the top $N_l - N_j$ particles. However, 
$$
Y^{(N_j)} + \eps\mathbf{1}_{N_j} := (Y^{(N_j)}_1 + \eps, \ldots, Y^{(N_j)}_{N_j} + \eps)',
$$
is also a system of $N_j$ competing Brownian particles, with the same parameters and driving Brownian motions as $Y^{(N_j)}$, but starting from 
$(\eps, z_1 + \eps, \ldots, z_1+\ldots + z_{N_j-1} + \eps)'$. Since $Y^{(N_j)}(0) + \eps \ge \check{Y}(0)$, because of~\eqref{750}, by \cite[Corollary 3.11(i)]{MyOwn2}, we have:
\begin{equation}
\label{7962}
\check{Y}_k(t) \le Y^{(N_j)}_k(t) + \eps,\ \ k = 1, \ldots, N_j,\ \ t \ge 0.
\end{equation}
Combining~\eqref{7961} and~\eqref{7962}, we get: 
$\ol{Y}^{(N_l)}_k(t) \le Y^{(N_j)}_k(t) + \eps$, for $k = 1, \ldots, N_j$, and $t \ge 0$. But for every fixed $k = 1, 2, \ldots$, $\lim_{j \to \infty}Y^{(N_j)}_k(t) = Y_k(t)$. Therefore, there exists $j_0(k) \ge 2$ such that $Y^{(N_{j_0(k)})}_k(t) \le Y_k(t) + \eps$. Meanwhile, for $l > l_0(j_0(k), k)$ we get: 
\begin{equation}
\label{7963}
\ol{Y}^{(N_l)}_k(t) \le Y_k(t) + 2\eps.
\end{equation}
We also have from~\eqref{751} that 
\begin{equation}
\label{7964}
\ol{Y}^{(N_l)}_k(t) \ge Y_k(t).
\end{equation}
Since $\eps > 0$ is arbitrarily small, combining~\eqref{7963} and~\eqref{7964}, we get~\eqref{conv4}.

\subsection{Proof of Lemma~\ref{satisfaction}.} (i) Define $\ol{\la} := \sup_{n \ge 1}\la_n$ and $z'_k = \la_k\ol{\la}^{-1}z_k \sim \CE(\ol{\la})$. We have: 
$z_1 + \ldots + z_n \ge z'_1 + \ldots + z'_n$. By the Law of Large Numbers, 
$z'_1 + \ldots + z'_n = n\ol{\la}^{-1}(1 + o(1))$ as $n \to \infty$. Therefore, we can estimate the infinite series as
$$
\SL_{n=1}^{\infty}e^{-\al(z_1 + \ldots + z_n)^2} \le \SL_{n=1}^{\infty}e^{-\al(z'_1+\ldots + z'_n)^2} \le \SL_{n=1}^{\infty}e^{-\al(\ol{\la}^{-2}(1 + o(1))n^2} < \infty.
$$

%
%

(ii) Recall that $\Var z_n = \la_n^{-2}$. For $S_n := z_1 + \ldots + z_n,\ n \ge 1$, we have: $\ME S_n = \La_n$. By \cite[Theorem 1.4.1]{StroockBook}, we have: $S_n - \La_n$ is bounded. The rest is trivial.

\subsection{Proof of Theorem~\ref{limitpointthm}} (i) It suffices to show that for every $k = 1, 2, \ldots$, the family of real-valued random variables
$$
Z_k = (Z_k(t), t \ge 0)
$$
is tight in $\BR_+$. Find an $N_j > k$ such that $[R^{(N_j)}]^{-1}\mu^{(N_j)} < 0$. Consider a finite system of $N_j$ competing Brownian particles with parameters 
$$
(g_n)_{1 \le n \le N_j},\ (\si_n^2)_{1 \le n \le N_j},\ (q^{\pm}_n)_{1 \le n \le N_j}.
$$
Denote this system by $Y^{(N_j)}$, as in the proof of Theorem~\ref{thm2}. Let 
$$
Z^{(N_j)} = (Z^{(N_j)}_1, \ldots, Z^{(N_j)}_{N_j-1})'
$$
be the corresponding gap process. By Proposition~\ref{basic}, the family of $\BR^{N_j-1}_+$-valued random variables $Z^{(N_j)}(t), t \ge 0$, is tight in $\BR^{N_j-1}_+$. By \cite[Corollary 3.9, Remark 9]{MyOwn2},
$$
Z^{(N_j)}_k(t) \ge Z_k(t) \ge 0,\ \ k = 1, \ldots, N_j-1.
$$
Since the collection of real-valued random variables $Z^{(N_j)}_k(t)$, $t \ge 0$, is tight, then the collection $Z_k(t), t \ge 0$, is also tight. 

\medskip

(ii) Fix $M \ge 2$. It suffices to show that 
$[\nu]_{M} \preceq [\pi]_{M}$. Since 
$[\pi^{(N_j)}]_{M}\ \Ra\ [\pi]_{M}$, as $j \to \infty$, 
it suffices to show that for $N_j > M$, we have: $[\nu]_{M} \preceq [\pi^{(N_j)}]_{M}$. Consider the system 
$$
Y^{(N_j)} = \left(Y^{(N_j)}_1, \ldots, Y^{(N_j)}_{N_j}\right)',
$$
which is defined in Definition~\ref{defapprox}. Let $Z^{(N_j)}$ be the corresponding gap process. Then 
$$
Z^{(N_j)}(t)\ \Ra\ \pi^{(N_j)},\ \ t \to \infty.
$$
But by \cite[Corollary 3.9, Remark 9]{MyOwn2}, 
$Z^{(N_j)}_k(t) \ge Z_k(t)$, $k = 1, \ldots, N_j - 1$. 
Therefore, $[Z^{(N_j)}(t)]_{M} \ge [Z(t)]_{M}$, for $t \ge 0$. 
And $[Z(t_j)]_{M}\ \Ra\ [\nu]_{M}$, as $j \to \infty$. Thus, $[\pi^{(N_j)}]_M \succeq [\nu]_M$. 

\medskip

(iii) Follows directly from (i). 

\subsection{Proof of Theorem~\ref{thmasymminf}} The proof resembles that of Lemma~\ref{notieinf} and uses Lemma~\ref{aux}. 

\medskip

(i) Define the following events:
$$
D = \{\exists t > 0:\ \exists k < l:\ Y_k(t) = Y_{k+1}(t),\ Y_l(t) = Y_{l+1}(t)\};
$$
$$
D_{k, l} = \{\exists t > 0:\ Y_k(t) = Y_{k+1}(t),\ Y_l(t) = Y_{l+1}(t)\}\ \ \mbox{for}\ \ k < l.
$$
Then it is easy to see that 
$$
D = \bigcup\limits_{k < l}D_{k, l}.
$$
Suppose $\omega \in D_{k, l}$, and take the $t  = t(\omega) > 0$ such that $Y_k(t) = Y_{k+1}(t)$, and $Y_l(t) = Y_{l+1}(t)$. 
There exists an $m > l$ such that $Y_l(t) = Y_{l+1}(t) = \ldots = Y_m(t) < Y_{m+1}(t)$, because otherwise the system $Y$ is not locally finite. Then there exist rational $q_-, q_+$ such that 
$$
t \in [q_-, q_+],\ \ \mbox{and}\ \ Y_m(s) < Y_{m+1}(s)\ \ \mbox{for}\ \ 
s \in [q_-, q_+].
$$
Therefore, $L_{(m, m+1)}(t) = \const$ on $[q_-, q_+]$, and, as in Lemma~\ref{notieinf},
$$
\left(\left(Y_1(s + q_-), \ldots, Y_m(s + q_-)\right)', 0 \le s \le q_+ - q_-\right)
$$
is a ranked system of $m$ competing Brownian particles with drift coefficients $(g_k)_{1 \le k \le m}$, diffusion coefficients $(\si_k^2)_{1 \le k \le m}$, and parameters of collision $(q^{\pm}_k)_{1 \le k \le m}$. This system experiences a simultaneous collision at time $s = t - q_- \in (0, q_+ - q_-)$. By \cite[Theorem 1.1]{MyOwn3}, this event has probability zero. Let us write this formally. Let 
\begin{align*}
D_{k, l, q_-, q_+, m} & = \{\exists t \in (q_-, q_+):\ Y_k(t) = Y_{k+1}(t),\ Y_l(t) = \ldots = Y_m(t) < Y_{m+1}(t), \\ & \mbox{and}\ \ Y_m(s) < Y_{m+1}(s)\ \ \mbox{for}\ \ s \in (q_-, q_+)\}.
\end{align*}
Then
$$
D = \bigcup\limits_{k < l}D_{k, l} \subseteq \bigcup D_{k, l, q_-, q_+, m},
$$
where the latter union is taken over all positive integers $k < l < m$ and positive rational numbers $q_- < q_+$. This union is countable, and by \cite[Theorem 1.2]{MyOwn3}, 
$\MP(D_{k, l, q_-, q_+, m}) = 0$, for each choice of $k, l, m, q_-, q_+$. Therefore, $\MP(D) = 0$, which completes the proof of (i).

\medskip

(ii) Let $B_1, B_2, \ldots$ be the driving Brownian motions of the system $Y$. Consider the ranked system of three competing Brownian particles:
$$
\ol{Y} = \left(\ol{Y}_{k-1}, \ol{Y}_k, \ol{Y}_{k+1}\right)',
$$
with drift coefficients $g_{k-1}, g_k, g_{k+1}$, diffusion coefficients $\si_{k-1}^2, \si_k^2, \si_{k+1}^2$ and parameters of collision $q^{\pm}_{k-1}, q^{\pm}_k, q^{\pm}_{k+1}$, with driving Brownian motions $B_{k-1}, B_k, B_{k+1}$, starting from 
$$
(Y_{k-1}(0), Y_k(0), Y_{k+1}(0))'.
$$
Let $(\ol{Z}_{k-1}, \ol{Z}_k)'$ be the corresponding gap process. Then by  \cite[Corollary 3.10, Remark 9]{MyOwn2}, we get:
$$
Z_{k-1}(t) \le \ol{Z}_{k-1}(t),\ \ Z_k(t) \le \ol{Z}_k(t),\ \ t \ge 0.
$$
But by \cite[Theorem 2]{MyOwn3}, with positive probability there exists $t > 0$ such that $\ol{Y}_{k-1}(t) = \ol{Y}_k(t) = \ol{Y}_{k+1}(t)$. So $\ol{Z}_{k-1}(t) = \ol{Z}_k(t) = 0$. Therefore, with positive probability there exists $t > 0$ such that $Z_{k-1}(t) = Z_k(t) = 0$, or, in other words, $Y_{k-1}(t) = Y_k(t) = Y_{k+1}(t)$.

\section{Appendix: Technical Lemmata}

\begin{lemma}
Assume that $(y_n)_{n \ge 1}$ is a sequence of real numbers such that 
$$
y_n \to \infty\ \ \mbox{and}\ \ \SL_{n=1}^{\infty}e^{-\al y_n^2} < \infty\ \ \mbox{for}\ \ \al > 0.
$$
Then for every $v \in \BR$ and $\be > 0$ we have:
$$
\SL_{n=1}^{\infty}\Psi\left(\frac{y_n + v}{\be}\right) < \infty.
$$
\label{lemma:special-series}
\end{lemma}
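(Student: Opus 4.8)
The plan is to reduce the convergence of $\SL_{n=1}^{\infty}\Psi\left(\frac{y_n+v}{\be}\right)$ to the hypothesis $\SL_{n=1}^{\infty}e^{-\al y_n^2} < \infty$ by means of an elementary Gaussian tail estimate. First I would recall the bound $\Psi(u) \le e^{-u^2/2}$, valid for every $u \ge 0$ (for instance via the Chernoff inequality $\MP(\xi > u) \le e^{-u^2/2}$ for a standard normal $\xi$, or equally well via the Mills ratio bound $\Psi(u) \le (u\sqrt{2\pi})^{-1}e^{-u^2/2}$). This reduces the task to showing that $\SL_{n}\exp\left(-\frac{(y_n+v)^2}{2\be^2}\right) < \infty$, once we have restricted to the range of $n$ where the argument $\frac{y_n+v}{\be}$ is nonnegative.

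Next, since $y_n \to \infty$, there is an index $n_0$ such that $y_n \ge 2|v|$ for all $n \ge n_0$; in particular $y_n > 0$ and $y_n + v \ge \tfrac12 y_n \ge 0$ for these $n$. For such $n$ the tail bound applies and yields
$$
\Psi\left(\frac{y_n+v}{\be}\right) \le \exp\left(-\frac{(y_n+v)^2}{2\be^2}\right) \le \exp\left(-\frac{y_n^2}{8\be^2}\right).
$$
Thus, setting $\al := (8\be^2)^{-1} > 0$, the tail $\SL_{n \ge n_0}\Psi\left(\frac{y_n+v}{\be}\right)$ is dominated term by term by $\SL_{n \ge n_0}e^{-\al y_n^2}$, which is finite by hypothesis. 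The finitely many remaining terms with $n < n_0$ are each finite, so the whole series converges.

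There is essentially no hard step here: the argument is a single application of a Gaussian tail bound followed by the comparison $(y_n+v)^2 \ge y_n^2/4$ valid for large $n$. The only points requiring a little care are (i) invoking the tail estimate only where its argument is nonnegative, which is exactly why we discard the finitely many small indices, and (ii) choosing $\al$ so that the hypothesis can be applied for the given $\be$; both are routine, and $v$ being of arbitrary sign causes no difficulty because it is absorbed once $y_n$ dominates $|v|$.
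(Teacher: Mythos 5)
Your proof is correct and takes essentially the same route as the paper: a Gaussian tail estimate on $\Psi$, discarding finitely many indices, and absorbing the shift $v$ so that the hypothesis applies with $\al$ proportional to $\be^{-2}$. The paper differs only cosmetically, using Feller's bound $\Psi(u) \le (2\pi)^{-1/2}e^{-u^2/2}$ for $u \ge 1$ and the inequality $(c+d)^2 \ge c^2/2 - d^2$ (giving a constant factor $e^{v^2/(2\be^2)}$ and $\al = (4\be^2)^{-1}$) where you instead require $y_n \ge 2|v|$ and take $\al = (8\be^2)^{-1}$.
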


\begin{proof}
By \cite[Chapter 7, Lemma 2]{FellerBook}, we have for $v \ge 1$:
$$
\Psi(v) \le \frac1{\sqrt{2\pi}v}e^{-v^2/2} \le \frac1{\sqrt{2\pi}}e^{-v^2/2}.
$$
But $y_n \to \infty$ as $n \to \infty$, and there exists $n_0$ such that for $n \ge n_0$ we have: $(y_n+v)/\be \ge 1$. Therefore, for $n \ge n_0$, we have:
$$
\Psi\left(\frac{y_n+v}{\be}\right) \le \frac1{\sqrt{2\pi}}\exp\left(-\frac1{2\be^2}(y_n+v)^2\right).
$$
Using an elementary inequality $(c+d)^2 \ge c^2/2 - d^2$ for all $c, d \in \BR$, we get:
$$
\frac1{2\be^2}(y_n+v)^2 \ge \frac1{4\be^2}y_n^2 - \frac1{2\be^2}v^2.
$$
Thus, 
\begin{align*}
\SL_{n>n_0}&\Psi\left(\frac{y_n+v}{\be}\right) \le \frac1{\sqrt{2\pi}}\SL_{n>n_0}\exp\left(-\frac{y_n^2}{4\be^2} + \frac{v^2}{2\be^2}\right) \\ & = 
\frac1{\sqrt{2\pi}}\exp\left(\frac{v^2}{2\be^2}\right)\SL_{n>n_0}\exp\left(-\frac{y_n^2}{4\be^2}\right) < \infty. 
\end{align*}
\end{proof}

\begin{lemma} 
\label{lemma:estimate-special}
Take an It\^o process
$$
V(t) = v_0 + \int_0^t\be(s)\md s + \int_0^t\rho(s)\md W(s),\ \ t \ge 0,
$$
where $v_0 \in \BR$, $W = (W(t), t \ge 0)$, is a standard Brownian motion, $\be = (\be(t), t \ge 0)$ and $\rho = (\rho(t), t \ge 0)$, are adapted processes such that a.s. for all $t \ge 0$ we have the following estimates: $\be(t) \ge \ol{g}$, $|\rho(t)| \le \ol{\si}$. 
If $x \le v_0 + \ol{g}T$, then we have the following estimate:
$$
\MP\left(\min\limits_{0 \le t \le T}V(t) \le x\right) \le 2\Psi\left(\frac{v_0 - x - (\ol{g}T)_-}{\ol{\si}\sqrt{T}}\right).
$$
\end{lemma}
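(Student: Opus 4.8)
The plan is to reduce the estimate to a statement about the running minimum of a continuous martingale and then invoke the reflection principle. First I would exploit the one-sided drift bound. Since $\be(s) \ge \ol{g}$ for all $s$, I can write
$$
V(t) = v_0 + \ol{g}t + \int_0^t\bigl(\be(s) - \ol{g}\bigr)\md s + M(t), \qquad M(t) := \int_0^t\rho(s)\md W(s),
$$
where the middle integral is nonnegative, so $V(t) \ge v_0 + \ol{g}t + M(t)$. On $[0,T]$ the deterministic term obeys $\ol{g}t \ge -(\ol{g}T)_-$ (one checks the cases $\ol{g}\ge 0$ and $\ol{g}<0$ separately: the minimum of $\ol{g}t$ over $[0,T]$ is $0$ in the first case and $\ol{g}T = -(\ol{g}T)_-$ in the second). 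Hence, writing $c := v_0 - x - (\ol{g}T)_-$, for $t \in [0,T]$ we have $V(t) \ge v_0 - (\ol{g}T)_- + M(t)$, and therefore
$$
\MP\Bigl(\min_{0\le t\le T}V(t) \le x\Bigr) \le \MP\Bigl(\min_{0\le t\le T}M(t) \le -c\Bigr).
$$
If $c \le 0$ the right-hand side is at most $1 \le 2\Psi\bigl(c/(\ol{\si}\sqrt{T})\bigr)$, since $\Psi(u)\ge 1/2$ for $u\le 0$, and the claim is immediate; so I may assume $c > 0$.

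Next I would control the martingale $M$ by a time change. The bound $|\rho|\le\ol{\si}$ gives $\langle M\rangle_t = \int_0^t\rho(s)^2\md s \le \ol{\si}^2 t$, so $M$ is a continuous square-integrable martingale on $[0,T]$ with $\langle M\rangle_T \le \ol{\si}^2 T$. By the Dambis--Dubins--Schwarz theorem there is a standard Brownian motion $\widetilde{W}$, after a possible enlargement of the probability space, with $M(t) = \widetilde{W}(\langle M\rangle_t)$. Because $t\mapsto\langle M\rangle_t$ is continuous and nondecreasing with range contained in $[0,\ol{\si}^2 T]$, the running minimum of $M$ over $[0,T]$ is bounded below by that of $\widetilde{W}$ over $[0,\ol{\si}^2 T]$, so
$$
\MP\Bigl(\min_{0\le t\le T}M(t) \le -c\Bigr) \le \MP\Bigl(\min_{0\le u\le \ol{\si}^2 T}\widetilde{W}(u) \le -c\Bigr).
$$

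Finally I would apply the reflection principle: for $c > 0$ and $s > 0$, using that $-\widetilde{W}$ is again a standard Brownian motion,
$$
\MP\Bigl(\min_{0\le u\le s}\widetilde{W}(u) \le -c\Bigr) = 2\,\MP\bigl(\widetilde{W}(s) \ge c\bigr) = 2\Psi\Bigl(\frac{c}{\sqrt{s}}\Bigr).
$$
Taking $s = \ol{\si}^2 T$ yields exactly $2\Psi\bigl((v_0 - x - (\ol{g}T)_-)/(\ol{\si}\sqrt{T})\bigr)$, which completes the estimate. The steps are mostly bookkeeping; the one place needing care is the time-change argument, where I expect the main (though mild) obstacle: one must observe that a possibly finite terminal quadratic variation still permits the Dambis--Dubins--Schwarz representation, and then justify passing from the minimum over the \emph{random} time interval $[0,\langle M\rangle_T]$ to the minimum over the \emph{fixed} interval $[0,\ol{\si}^2 T]$, which is what turns the quadratic-variation bound into the explicit Gaussian tail.
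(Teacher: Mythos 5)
Your proof is correct and follows essentially the same route as the paper's: bound the drift contribution from below by $-(\ol{g}T)_-$, isolate the stochastic integral $M$, represent it as a time-changed Brownian motion via Dambis--Dubins--Schwarz, pass from the random interval $[0,\langle M\rangle_T]$ to the fixed interval $[0,\ol{\si}^2T]$ using $\langle M\rangle_T \le \ol{\si}^2T$, and finish with the reflection principle. Your explicit treatment of the degenerate case $c \le 0$ (where the bound holds trivially since $2\Psi(c/(\ol{\si}\sqrt{T})) \ge 1$) is a small refinement the paper omits, but it does not change the argument.
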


\begin{proof} Let $M(t) = \int_0^t\rho(s)\md W(s),\ t \ge 0$. Then $M = (M(t), t \ge 0)$ is a continuous square-integrable martingale with $\langle M\rangle_t = \int_0^t\rho^2(s)\md s$. There exists a standard Brownian motion $B = (B(t), t \ge 0)$ so that we can make a time-change: $M(t) \equiv B(\langle M\rangle_t)$. Then 
\begin{align*}
\left\{\min\limits_{0 \le t \le T}V(t) \le x\right\} &\subseteq \left\{\min\limits_{0 \le t \le T}M(t) - (\ol{g}T)_- + v_0 \le x\right\} \\ & \subseteq \left\{\min\limits_{0 \le t \le T}B(\langle M\rangle_t) \le x - v_0 + (\ol{g}T)_-\right\}.
\end{align*}
Because $\langle M\rangle_t \le \ol{\si}^2T$ for $t \in [0, T]$, we have:
$$
\left\{\min\limits_{0 \le t \le T}B(\langle M\rangle_t) \le x - v_0 + (\ol{g}T)_-\right\} \subseteq \left\{\min\limits_{0 \le t \le \ol{\si}^2T}B(t) \le x - v_0 + (\ol{g}T)_-\right\}.
$$
Finally,
\begin{align*}
\MP\left(\min\limits_{0 \le t \le \ol{\si}^2T}B(t) \le x - v_0 + (\ol{g}T)_-\right) & =
2\MP\left(B(\ol{\si}^2T) \le x - v_0 + (\ol{g}T)_-\right) \\ & = 2\Psi\left(\frac{v_0 - x - (\ol{g}T)_-}{\ol{\si}\sqrt{T}}\right).
\end{align*}
\end{proof}

\begin{lemma}
\label{lemma:estimate-enhanced}
Assume that in the setting of Lemma~\ref{lemma:estimate-special}, we have $
|\be(t)| \le \ol{g}$ and $|\rho(t)| \le \ol{\si}$ for $t \ge 0$ a.s. If $x \ge |v_0| + \ol{g}T$, then 
$$
\MP\left(\max\limits_{0 \le t \le T}|V(t)| \le x\right) \le 4\Psi\left(\frac{v_0 - x - \ol{g}T}{\ol{\si}\sqrt{T}}\right).
$$
\end{lemma}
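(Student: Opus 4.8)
The plan is to reduce this two-sided estimate to Lemma~\ref{lemma:estimate-special} applied twice, to $V$ and to $-V$. (I read the left-hand event as the large-deviation event $\{\max_{0\le t\le T}|V(t)| \ge x\}$ and the $\Psi$-argument as $(x-|v_0|-\ol{g}T)/(\ol{\si}\sqrt{T})$; with the inequalities exactly as printed the bound is vacuous, since its right-hand side then exceeds $2$.) First I would split
$$\Bigl\{\max_{0\le t\le T}|V(t)| \ge x\Bigr\} = \Bigl\{\max_{0\le t\le T}V(t) \ge x\Bigr\} \cup \Bigl\{\min_{0\le t\le T}V(t) \le -x\Bigr\}$$
and treat the two one-sided events separately.

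For the downward event I would apply Lemma~\ref{lemma:estimate-special} to $V$ directly: the two-sided hypothesis $|\be(t)|\le\ol{g}$ gives in particular the drift lower bound $\be(t)\ge -\ol{g}$, so the lemma applies with its parameter $\ol{g}$ replaced by $-\ol{g}$, for which $((-\ol{g})T)_- = \ol{g}T$. The threshold condition of that lemma reads $-x \le v_0 - \ol{g}T$, which holds because $x \ge |v_0|+\ol{g}T \ge -v_0+\ol{g}T$, and it yields
$$\MP\Bigl(\min_{0\le t\le T}V(t)\le -x\Bigr) \le 2\Psi\Bigl(\frac{x+v_0-\ol{g}T}{\ol{\si}\sqrt{T}}\Bigr).$$
For the upward event I would instead apply the lemma to $-V(t) = -v_0 + \int_0^t(-\be(s))\,\md s + \int_0^t(-\rho(s))\,\md W(s)$, which is again an It\^o process of the required form, with starting point $-v_0$, drift $-\be$ bounded below by $-\ol{g}$, and diffusion coefficient still bounded by $\ol{\si}$. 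Applied at level $-x$ (the condition $-x\le -v_0-\ol{g}T$ again follows from $x\ge |v_0|+\ol{g}T$), this gives
$$\MP\Bigl(\max_{0\le t\le T}V(t)\ge x\Bigr) = \MP\Bigl(\min_{0\le t\le T}(-V(t))\le -x\Bigr) \le 2\Psi\Bigl(\frac{x-v_0-\ol{g}T}{\ol{\si}\sqrt{T}}\Bigr).$$

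I would finish by a union bound, adding the two estimates. Both $\Psi$-arguments are at least $(x-|v_0|-\ol{g}T)/(\ol{\si}\sqrt{T})$, which is nonnegative by hypothesis, and $\Psi$ is decreasing; hence each of the two terms is at most $\Psi\bigl((x-|v_0|-\ol{g}T)/(\ol{\si}\sqrt{T})\bigr)$, and the sum is at most four times this quantity, as claimed. The only delicate point, and the single place where care is needed, is the sign bookkeeping: checking that passing to $-V$ flips the drift bound correctly while preserving the hypotheses of Lemma~\ref{lemma:estimate-special}, and verifying that the assumption $x\ge|v_0|+\ol{g}T$ is exactly what makes the threshold condition hold in both applications and keeps both $\Psi$-arguments nonnegative, so that the final monotonicity step is valid. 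Beyond this there is no genuine obstacle, since all of the time-change and reflection-principle analysis is inherited from Lemma~\ref{lemma:estimate-special}.
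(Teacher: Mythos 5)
Your proof is correct and follows exactly the paper's (one-sentence) argument: apply Lemma~\ref{lemma:estimate-special} twice, once to the minimum of $V$ and once (via $-V$) to its maximum, then combine with a union bound. Your reading of the misprinted statement---the event should be $\{\max_{0 \le t \le T}|V(t)| \ge x\}$ with $\Psi$-argument $(x - |v_0| - \ol{g}T)/(\ol{\si}\sqrt{T})$, since the version as printed is vacuous---is the intended one, and your sign bookkeeping in the two applications (in particular $((-\ol{g})T)_- = \ol{g}T$ and the verification that $x \ge |v_0| + \ol{g}T$ yields both threshold conditions) is exactly right.
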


\begin{proof} This follows from applying Lemma~\ref{lemma:estimate-special} twice: once for the minimum and once for the maximum of the process $V$. (We can adjust Lemma~\ref{lemma:estimate-special} to work for maximum of $V$ in an obvious way.) 
\end{proof}

\begin{lemma}
\label{lemma:tight-martingale}
Take a sequence $(M_n)_{n \ge 1}$ of continuous local martingales on $[0, T]$, such that $M_n(0) = 0$, and $\langle M_n\rangle_t$ is differentiable for all $n$, and
$$
\sup\limits_{n \ge 1}\sup\limits_{t \in [0, T]}\frac{\md \langle M_n\rangle_t}{\md t} = C < \infty.
$$
Then the sequence $(M_n)_{n \ge 1}$ is tight in $C[0, T]$. 
\end{lemma}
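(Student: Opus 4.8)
The plan is to verify the standard moment (Kolmogorov--Chentsov / Billingsley type) criterion for tightness in $C[0,T]$. Since every path satisfies $M_n(0) = 0$, the laws of the initial values are trivially tight, so it remains only to produce a modulus-of-continuity control uniform in $n$. I would use the following sufficient condition: if there are constants $\gamma, \delta, K > 0$, independent of $n$, with
$$
\ME|M_n(t) - M_n(s)|^{\gamma} \le K|t-s|^{1+\delta}, \qquad 0 \le s \le t \le T,
$$
then the sequence $(M_n)_{n \ge 1}$ is tight in $C[0,T]$.

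The single input I need from the hypotheses is a uniform Lipschitz bound on the quadratic variations. Writing $\langle M_n\rangle_t - \langle M_n\rangle_s = \int_s^t \frac{\md\langle M_n\rangle_u}{\md u}\,\md u$ and using $\frac{\md\langle M_n\rangle_u}{\md u} \le C$, I obtain
$$
\langle M_n\rangle_t - \langle M_n\rangle_s \le C(t-s), \qquad 0 \le s \le t \le T,
$$
uniformly over all $n$.

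To pass from this to a moment bound on the increments of $M_n$ themselves, I would apply the Burkholder--Davis--Gundy inequality with exponent $4$: there is a universal constant $c_4$ with
$$
\ME|M_n(t) - M_n(s)|^{4} \le c_4\,\ME\!\left[\left(\langle M_n\rangle_t - \langle M_n\rangle_s\right)^{2}\right] \le c_4 C^{2}\,(t-s)^{2}.
$$
This is precisely the criterion above with $\gamma = 4$ and $1+\delta = 2$, i.e. $\delta = 1 > 0$ and $K = c_4 C^2$; since all constants are independent of $n$, tightness in $C[0,T]$ follows.

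I expect no serious obstacle here: the statement is a routine consequence of BDG once the Lipschitz bound on $\langle M_n\rangle$ is in hand. An alternative route, matching the time-change already used in Lemma~\ref{lemma:estimate-special}, would represent $M_n(t) = B_n(\langle M_n\rangle_t)$ for standard Brownian motions $B_n$; the bound $\langle M_n\rangle_t \le Ct$ then confines the time argument to $[0, CT]$ and reduces the modulus of continuity of $M_n$ on $[0,T]$ to that of a Brownian motion, whose law does not depend on $n$. The only care needed on this route is the enlargement of the probability space when $\langle M_n\rangle_T$ is finite, so that the representing Brownian motion is defined on all of $[0, CT]$; this bookkeeping is exactly what the BDG argument sidesteps, which is why I would favor it.
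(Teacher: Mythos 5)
Your proposal is correct and follows essentially the same route as the paper: the paper also reduces tightness to the uniform fourth-moment bound $\sup_n \ME(M_n(t)-M_n(s))^4 \le C_0(t-s)^2$ (via the Kolmogorov-type criterion of Karatzas--Shreve, Chapter 2, Problem 4.11) and obtains it from the Burkholder--Davis--Gundy inequality together with the Lipschitz bound on $\langle M_n\rangle$. Your constant $c_4C^2$ is in fact the correct one (the paper's $C_4C^4$ reflects a harmless slip in squaring), so no changes are needed.
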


\begin{proof} Use \cite[Chapter 2, Problem 4.11]{KSBook} (with obvious adjustments, because the statement in this problem is for $\BR_+$ instead of $[0, T]$). We need only to show that 
\begin{equation}
\label{1000}
\sup\limits_{n \ge 1}\ME(M_n(t) - M_n(s))^4 \le C_0(t - s)^2
\end{equation}
for all $0 \le s \le t \le T$ and for some constant $C_0$, depending only on $C$ and $T$. By the Burkholder-Davis-Gundy inequality, see \cite[Chapter 3, Theorem 3.28]{KSBook}, for some absolute constant $C_4 > 0$ we have:
\begin{equation}
\ME(M_n(t) - M_n(s))^4 \le C_4\ME\left(\langle M_n\rangle_t - \langle M_n\rangle_s\right)^2 \le C_4(C^2(t - s))^2 = C_4C^4(t - s)^2.
\label{0001}
\end{equation}
\end{proof}

\section*{Acknoweldgements}

I would like to thank \textsc{Ioannis Karatzas}, \textsc{Soumik Pal}, \textsc{Xinwei Feng}, \textsc{Amir Dembo}, and \textsc{Vladas Sidoravicius} for help and useful discussion. I am also thankful to anonymous referees for meticulously reviewing this article, which helped to significantly improve it. This research was partially supported by  NSF grants DMS 1007563, DMS 1308340, DMS 1405210, and DMS 1409434.

%

\medskip\noindent

\end{document}